\newtheorem{thm}{Theorem}[section]
\newtheorem{defn}[thm]{Definition}
\newtheorem{lem}[thm]{Lemma}
\newtheorem{prop}[thm]{Proposition}
\newtheorem{cor}[thm]{Corollary}
\title{Mis\`ere Hackenbush Flowers}
\author{Irene Y.~Lo}
\begin{document}

\maketitle
\normalem

\begin{abstract}
We show that any disjunctive sum of Hackenbush Flowers $G$ has as evil twin $G^* \in \{G, G+*\}$ such that the outcomes of $G$ under normal and mis\`ere play are the same as the outcomes of $G^*$ under mis\`ere and normal play respectively. We also show that, under mis\`ere play, any Green Hackenbush position that has a single edge incident with the ground is equivalent to a nim-heap.
\end{abstract}

\section{Introduction}
\textit{Hackenbush} is a combinatorial game played on a finite graph with colored edges. In Regular Hackenbush, the edges can be colored red, blue or green, in Red-Blue Hackenbush, the edges can be colored red or blue, and in Green Hackenbush, all the edges are green. A subset of the vertices is designated the ground and usually drawn on a long horizontal line. The players Left and Right take turns cutting edges, with the restriction that Left can only cut blue and green edges, and Right can only cut red and green edges. At any stage in the game, any component of the graph no longer connected to the ground is discarded. Under \textit{normal-play}, the last player to move wins, and under \textit{mis\`ere-play}, the last player to move loses. Figure \ref{hackflowers} shows some examples of games in Hackenbush.

		\begin{figure}[h!]
		\centering

\begin{tikzpicture}
\draw[line width=1mm] (-3,0)--(-1,0);

\draw[line width=1mm,draw=green] (-2,0)--(-2,1);
\draw[fill=white] (-2,0) ellipse (4pt and 4pt);
\draw[fill=white] (-2,1) ellipse (4pt and 4pt);
\draw (-2,-0.5)--(-2,-0.5)  node {(a) The game $*$};

\end{tikzpicture}
\begin{tikzpicture}
\draw[line width=1mm] (0,0)--(4,0);

\draw[line width=1mm,draw=green] (2,0)--(2,3)--(1,4);
\draw[line width=1mm,draw=green] (2,1)--(2,3)--(3,4);
\draw[line width=1mm,draw=green] (0,5)--(1,4);
\draw[line width=1mm,draw=green] (2,5)--(1,4);
\draw[line width=1mm,draw=green] (0,5)--(1,6);
\draw[line width=1mm,draw=green] (2,5)--(1,6);

\draw[fill=white] (0,5) ellipse (4pt and 4pt);
\draw[fill=white] (2,5) ellipse (4pt and 4pt);
\draw[fill=white] (1,6) ellipse (4pt and 4pt);

\draw[fill=white] (2,0) ellipse (4pt and 4pt);
\draw[fill=white] (2,1) ellipse (4pt and 4pt);
\draw[fill=white] (2,2) ellipse (4pt and 4pt);
\draw[fill=white] (2,3) ellipse (4pt and 4pt);
\draw[fill=white] (1,4) ellipse (4pt and 4pt);
\draw[fill=white] (3,4) ellipse (4pt and 4pt);

\draw (2,-0.5)--(2,-0.5)  node {(b) a Shrub};

\end{tikzpicture}
\begin{tikzpicture}
\draw[line width=1mm] (3,0)--(5,0);

\draw[line width=1mm,draw=green] (4,0)--(4,3);
\draw[draw = red, line width = 1mm] (4-0.64,1+3-0.19) arc (70:210:0.25cm);%
\draw[draw = red, line width = 1mm] (4-0.94,1+2.45) arc (210:235:0.6cm);%
\draw[draw = red, line width = 1mm] (4-0.78,1+2+0.26) arc (235:265:1.5cm);
\draw[draw = red, line width = 1mm] (4-0.42,1+2.5+0.17) arc (45:70:0.6cm);
\draw[draw = red, line width = 1mm] (4-0.03,1+2.0+0.00) arc (15:45:1.5cm);%
\draw[draw = red, line width = 1mm] (3+0.06,1+2-0.49) arc (150:290:0.25cm); %
\draw[draw = red, line width = 1mm] (4-0.65,1+1.14) arc (290:315:0.6cm);
\draw[draw = red, line width = 1mm] (4-0.43,1+1+0.28) arc (315:345:1.5cm);
\draw[draw = red, line width = 1mm] (4-0.76,1+1.5+0.21) arc (125:150:0.6cm);%
\draw[draw = red, line width = 1mm] (4-0.03,1+2.0-0.02) arc (95:125:1.5cm);%

\draw[draw = red, line width = 1mm] (4+0.93,1+3-0.53) arc (-20:110:0.25cm);%
\draw[draw = red, line width = 1mm] (4+0.75,1+2.28) arc (-55:-30:0.6cm); %
\draw[draw = red, line width = 1mm] (4+0.76,1+2+0.28) arc (305:275:1.5cm);%
\draw[draw = red, line width = 1mm] (4+0.62,1+2.79) arc (110:135:0.6cm);%
\draw[draw = red, line width = 1mm] (4+0.41,1+2.0+0.67) arc (135:165:1.5cm);
\draw[draw = red, line width = 1mm] (4+0.64,1+2-0.85) arc (-110:30:0.25cm); %
\draw[draw = red, line width = 1mm] (4+0.42,1+1.29) arc (225:250:0.6cm);%
\draw[draw = red, line width = 1mm] (4+0.04,1+2-0.05) arc (195:225:1.5cm);%
\draw[draw = red, line width = 1mm] (4+0.95,1+1.5+0.01) arc (30:55:0.6cm);%
\draw[draw = red, line width = 1mm] (4+0.78,1+2.0-0.30) arc (55:85:1.5cm);
\draw[fill=white] (4,0) ellipse (4pt and 4pt);
\draw[fill=white] (4,1) ellipse (4pt and 4pt);
\draw[fill=white] (4,2) ellipse (4pt and 4pt);
\draw[fill=white] (4,3) ellipse (4pt and 4pt);

\draw (4,-0.5)--(4,-0.5)  node {(c) a Flower};

\end{tikzpicture}
\begin{tikzpicture}
\draw[line width=1mm] (0,1)--(2,1);

\draw[line width=1mm,draw=green] (1,1)--(1,4);
\draw[line width=1mm,draw=red] (0,5)--(1,4);
\draw[line width=1mm,draw=blue] (2,5)--(1,4);
\draw[line width=1mm,draw=blue] (0,5)--(1,6);
\draw[line width=1mm,draw=red] (2,5)--(1,6);

\draw[fill=white] (0,5) ellipse (4pt and 4pt);
\draw[fill=white] (2,5) ellipse (4pt and 4pt);
\draw[fill=white] (1,6) ellipse (4pt and 4pt);

\draw[fill=white] (1,1) ellipse (4pt and 4pt);
\draw[fill=white] (1,2) ellipse (4pt and 4pt);
\draw[fill=white] (1,3) ellipse (4pt and 4pt);
\draw[fill=white] (1,4) ellipse (4pt and 4pt);

\draw (1,0.5)--(1,0.5)  node {(d) a Generalized Flower};

\end{tikzpicture}
\begin{tikzpicture}
\draw[line width=1mm] (6,0)--(8,0);

\draw[line width=1mm,draw=green] (7,0)--(7,1);
\draw[line width=1mm,draw=blue] (7,1)--(7,2);
\draw[line width=1mm,draw=red] (7,2)--(7,3);
\draw[line width=1mm,draw=blue] (7,3)--(7,6);
\draw[fill=white] (7,0) ellipse (4pt and 4pt);
\draw[fill=white] (7,1) ellipse (4pt and 4pt);
\draw[fill=white] (7,2) ellipse (4pt and 4pt);
\draw[fill=white] (7,3) ellipse (4pt and 4pt);
\draw[fill=white] (7,4) ellipse (4pt and 4pt);
\draw[fill=white] (7,5) ellipse (4pt and 4pt);
\draw[fill=white] (7,6) ellipse (4pt and 4pt);

\draw (7,-0.5)--(7,-0.5)  node {(e) a Sprig};

\end{tikzpicture}

		\caption{Some Hackenbush Game Positions}
		\label{hackflowers}
		\end{figure}
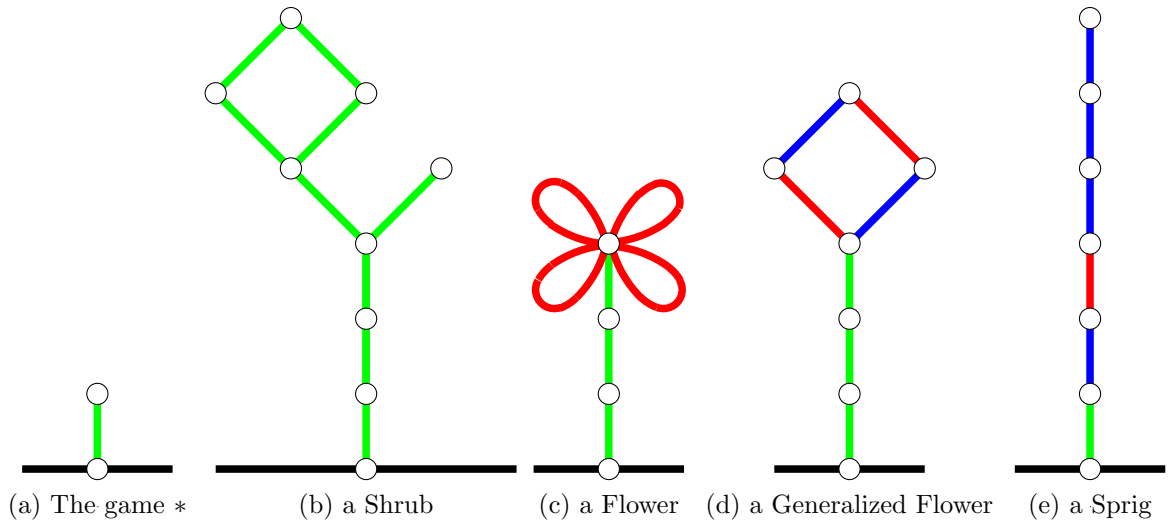

Regular Hackenbush, although simple to explain, is famously complex and theoretically deep. It is NP-hard under both play conventions \cite{np}, and many simple Hackenbush positions, such as Flowers, do not have known polynomial-time solutions under either play convention. Normal-play Hackenbush has been used by standard texts in combinatorial game theory to demonstrate many definitions and concepts in normal-play game theory (\cite{WW, ONAG}). However, many fundamental simplifications made in normal-play do not generalize to mis\`ere play, and much less is currently known about mis\`ere play Hackenbush and mis\`ere play combinatorial games in general (\cite{ONAG, canmis}).

In this paper, we will study Flowers and Green Hackenbush game positions with only a single edge incident with the group, some of which are shown in Figure \ref{hackflowers}. Some terms necessary to describe the particular games we are interested in are as follows. A \textit{String} is a path with one end on the ground. A \textit{Stalk} of \textit{height} $n$ is a green String of length $n$. The game $*$ is a Stalk of height $1$. A \textit{Shrub} is a rooted graph with green edges, such that the root vertex has degree one and is the only vertex on the ground. 

Let $g$ and $h$ be graphs with vertex subsets $gr(g)$ and $gr(h)$ designated to be the ground, and let them correspond to Hackenbush positions $G$ and $H$. We define the \textit{disjunctive sum} $G+H$ by the Hackenbush position corresponding to the disjoint union of $g \cup h$, with the set of ground vertices given by $gr(g) \cup gr(h)$. We say that a subgraph $g_1 \subset g$ of the underlying graph of a Hackenbush position \textit{supports} its complement $g_2 = g \backslash g_1 $ if removing any edge in $g_1$ disconnects $g_2$ from the ground, and we say that $G$ is the \textit{ordinal sum} of $G_1$ and $G_2$ \cite{WW}. 

A \textit{Flower} is a Stalk supporting a number of loops, all red or all blue, and a \textit{Generalized Flower} is a green String supporting a red-blue Hackenbush position. A \textit{Sprig} is green edge supporting a red-blue String, and a \textit{Generalized Sprig} is a green edge supporting a red-blue Hackenbush position. 

A \textit{star-based position} is the ordinal sum of $*$ and any position. The \textit{stem} of a star-based position is the longest induced green path from the grounded vertex of the underlying graph, and the \textit{height} of a star-based position is the length of the stem. Shrubs, Generalized Flowers and Generalized Sprigs are all examples of star-based positions in Hackenbush.

The outcome classes of sums of some of these game positions have been determined. Conway classified the outcome classes of Hackenbush Sprigs and Green Hackenbush under normal play in \cite{ONAG}. In particular, he showed that any Green Hackenbush games, such Hackenbush Shrubs, are equivalent to nim heaps. McKay, Milley and Nowakowski classified the outcome classes of Hackenbush Sprigs under mis\`ere play in \cite{sprigs}, and noted that they are the same as the normal play outcome classes after addition of a single green edge. These results suggest a close relationship between normal and mis\`ere play outcomes of larger classes of games in Hackenbush. 



In this paper we focus on Shrubs, Generalized Sprigs and Generalized Flowers, and prove a generalization of McKay, Milley and Nowakowski's result on the relationship between the normal and mis\`ere play outcome classes. 

Our main theorem requires the following results and definitions (see also Section \ref{prelim}).

\begin{thm}\label{thm:shrubs}
Let $S$ be a Shrub with normal play nim-value $g^+(S)$. Then $S$ is equivalent in the mis\`ere universe to a Stalk of length $g^+(S)$.
\end{thm}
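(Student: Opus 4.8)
The plan is to prove the stronger statement that $S$ is indistinguishable from the Stalk $*n$ (where $n = g^+(S)$) modulo the misère universe $\mathcal{U}$, by induction on the number of edges of $S$. The essential structural inputs are that $S$ is impartial (every edge is green, so Left and Right share the same moves) and that its options are again Shrubs. In the base case the one-edge Shrub is $* = *1$ with nim-value $1$, and the empty position is the Stalk of length $0$, so the claim holds.

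First I would determine the options of $S$. Cutting the unique ground edge disconnects the entire graph and leaves the empty position $0$; cutting any other edge leaves the component still joined to the ground, which is again a connected green graph meeting the ground in a single degree-one vertex, hence a smaller Shrub $S_e$. Thus every option of $S$ is either $0$ or a Shrub $S_e$ with fewer edges, and by the inductive hypothesis $S_e$ is indistinguishable modulo $\mathcal{U}$ from the Stalk $*n_e$, where $n_e = g^+(S_e)$. Because $\mathcal{U}$ is closed under followers, indistinguishability modulo $\mathcal{U}$ is preserved under replacing an option by an indistinguishable one (this follows by a routine induction on the disjunctive sum $S + X$, comparing the outcomes of matching options), so $S$ is indistinguishable from the impartial game $S'$ whose options are exactly $0$ together with the heaps $*n_e$.

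Now I would compare $S'$ with $*n = \{\,*0, *1, \dots, *(n-1)\,\}$. Since the normal-play nim-value is the minimal excludant of the option values, the options of $S'$ include a heap of every value $0, 1, \dots, n-1$, while no option has value exactly $n$; the only remaining options are ``high'' heaps $*m$ with $m > n$. It therefore suffices to show that these high options may be deleted without changing the game modulo $\mathcal{U}$, for then $S'$ has precisely the options of $*n$ and the induction closes.

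The main obstacle is exactly this deletion: in misère play one cannot freely discard dominated or reversible options as in normal play, and the high heaps must be handled honestly inside the universe $\mathcal{U}$. Each high option $*m$ ($m > n$) has $*n$ as a follower, so it is a candidate reversible move through $*n$; the difficulty is to verify the misère reversibility criterion in $\mathcal{U}$, namely that the opponent's reply $*m \to *n$ returns play to a position already equivalent to $S'$. I expect to carry this out by a direct outcome argument, showing that $S + X$ and $*n + X$ have the same misère outcome for every $X \in \mathcal{U}$: one prescribes responses mirroring optimal play in $*n + X$, answering any move to a high heap $*m$ by immediately reducing it back to $*n$. An alternative, cleaner in statement but requiring an auxiliary fact, is to show that every Green Hackenbush Shrub is \emph{tame} with the same genus as $*n$, and then invoke that tame games with nim-heap genus are indistinguishable from the corresponding nim-heap; the direct argument has the advantage of keeping the proof self-contained within $\mathcal{U}$.
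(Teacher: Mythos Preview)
Your argument is correct. The paper's own proof (given as the restatement in Theorem~\ref{shrubs2}) takes a somewhat different route: it inducts on the nim-value $n=g^{+}(S)$ rather than on the number of edges, and runs a single strategy-stealing comparison between $S+T$ and $*_n+T$ for arbitrary $T$. In that argument the $S$-component is never replaced by anything; one simply tracks its current normal-play nim-value, answers any opponent move to a follower of nim-value exceeding $n$ by moving back to a follower of nim-value exactly $n$, and invokes the inductive hypothesis the first time some player drops the nim-value strictly below $n$. Your version instead spends the inductive hypothesis up front, replacing every option of $S$ by its equivalent heap via the option-replacement lemma, and then shows separately that the resulting game $S'$ --- a heap $*n$ augmented by finitely many extra options $*m$ with $m>n$ --- is indistinguishable from $*n$ itself. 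Both proofs rest on the same key move (reduce a high heap back to $*n$); you package it as two lemmas (replacement of options, then deletion of high options), while the paper folds everything into one induction. Your decomposition is a little more modular and makes the reversibility mechanism explicit; the paper's is shorter and avoids stating the replacement lemma. One small caution on your alternative route: tameness and genus arguments live in the \emph{impartial} mis\`ere universe, whereas the theorem here asserts equivalence over the full mis\`ere universe (arbitrary partizan $T$), so the direct outcome argument is the one that actually delivers the stated conclusion.
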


\begin{defn}
Let $G = \sum_{i=1}^n G_i $ be the disjunctive sum of $n$ Shrubs and Generalized Flowers. Let $G'_i$ be the Stalk equivalent to $G_i$ if $G_i$ is a Shrub, and just $G_i$ otherwise. Then the \textit{evil twin} $G^*$ of $G$ is given by
\begin{eqnarray*}
G^* = \left\{ 
\begin{array}{cc}
G & \text{ if $G'_i$ has height at least $2$ for some $i$} \\
G+* & \text{ if $G'_i$ has height $1$ for all $i$} 
\end{array}
\right.
\end{eqnarray*}
\end{defn} 

\noindent Our main result is the following.

\begin{thm}\label{thm:intro}
If $G$ is the disjunctive sum of Shrubs and Generalized Flowers, then the outcome of $G$ under normal play is the same as the outcome of $G^*$ under mis\`ere play, and vice versa.
\end{thm}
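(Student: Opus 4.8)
The plan is to first use Theorem~\ref{thm:shrubs} to eliminate the Shrubs, and then to prove a single self-dual statement relating misère and normal outcomes of sums of star-based positions. Throughout, write $o^+$ and $o^-$ for the normal- and misère-play outcome functions, valued in $\{\mathcal{P},\mathcal{N},\mathcal{L},\mathcal{R}\}$. By Theorem~\ref{thm:shrubs} each Shrub summand $G_i$ is misère-equivalent to the Stalk $G_i'$ of length $g^+(G_i)$, and by Conway's classification of Green Hackenbush \cite{ONAG} it is also normal-play equal to that same Stalk. Since the misère equivalence holds inside a universe containing the remaining summands, I may substitute $G_i'$ for $G_i$ without changing either $o^+$ or $o^-$ of the whole sum. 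Hence it suffices to prove the theorem for $H:=\sum_i G_i'$, a disjunctive sum of Stalks and Generalized Flowers, writing $H^\dagger$ for the position equal to $H$ when some summand has height at least $2$ and to $H+*$ otherwise.

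Next I would reduce the two claimed equalities to one. Because applying the $\dagger$ operation twice either does nothing (when a tall summand is present) or appends two stars, and because $*+*=0$ under normal play, the identity $o^-(K)=o^+(K^\dagger)$ applied to $K=H^\dagger$ yields $o^+(H)=o^-(H^\dagger)$ for free. So the whole theorem follows from the single statement $P(H):\ o^-(H)=o^+(H^\dagger)$, which I would prove by induction on the number of edges of $H$. The structural fact driving the induction is that every one of these positions is dicotic: the base green edge of each nonempty component may be cut by either player, so the two move conventions differ only in the treatment of the final, empty position, and a move in a stem can collapse a Generalized Flower down to a short Stalk.

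The $\mathcal{L}$/$\mathcal{R}$/central trichotomy I would dispose of first. A sum of Flowers carries an additive net-dominance invariant (an atomic-weight-type quantity measuring the excess of blue over red petals); adding a star does not change it, so $H$ and $H^\dagger$ share the same value. I would show that whenever this invariant is nonzero it forces the same one of $\mathcal{L}$ or $\mathcal{R}$ for both $o^-(H)$ and $o^+(H^\dagger)$, leaving only the central case in which both sides lie in $\{\mathcal{P},\mathcal{N}\}$ and the outcome is a genuine parity question.

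The central case is where the real content, and the main obstacle, lies. Here I would run the strategy-matching parity argument familiar from misère Nim and from the Sprig analysis of \cite{sprigs}. When all summands have height $1$, the extra $*$ in $H^\dagger$ supplies exactly the one spare move needed to reverse the endgame parity: the player who wins the misère game on $H$ wins the normal game on $H+*$ by copying moves and spending the star last, with base cases $H=*$ and $H$ a single height-$1$ Generalized Flower anchoring the recursion. When some summand has height at least $2$, that summand already contains a move down to a height-$1$ position, and I would show it can serve as the internal parity buffer, so that no external star is needed and $o^-(H)=o^+(H)$ directly. The hard part will be making these buffer and strategy-transfer arguments rigorous across all four outcome classes at once, since cutting a stem edge can change a summand's height category mid-play; carefully tracking how the ``height at least $2$'' status and the net-dominance invariant evolve under every option is precisely what the induction must control.
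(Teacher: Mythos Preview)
Your reduction of the two claimed equalities to the single statement $o^-(H)=o^+(H^\dagger)$ is fine, and the initial replacement of Shrubs by Stalks via Theorem~\ref{thm:shrubs} matches the paper. The genuine gap is the ``net-dominance invariant'' step. No additive, atomic-weight-type quantity determines the $\mathcal{L}/\mathcal{R}$ versus $\{\mathcal{P},\mathcal{N}\}$ split in the way you assert. Already for a Flowerbed of size $1$, Lemma~\ref{lem:Nn1,b<c} gives a game $*_{b_1}{:}x_1+*_{c_1}{:}(-y_1)+A$ with one blue and one red Flower (hence zero net excess by any reasonable count) whose outcome is $\mathcal{L}$ when $a~\text{\sout{$\uparrow$}}~(b_1-1)<c_1$ and $\mathcal{N}$ otherwise; so the central case does not coincide with vanishing of an additive invariant. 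Conversely, with one more blue Flower than red, Theorem~\ref{thm:Generalized Sprigs}(ii) shows the outcome can be $\mathcal{N}$ rather than $\mathcal{L}$. The trichotomy you propose therefore fails in both directions, and the remaining ``central'' analysis cannot be a mere parity argument.

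The paper's route is quite different and considerably more technical: it reduces to equal numbers of red and blue Flowers via a mis\`ere Two-Ahead Ploy (Lemma~\ref{lem:2ahead}, Corollary~\ref{cor:onemore}), explicitly classifies Generalized Sprigs plus Stalks (Theorem~\ref{thm:Generalized Sprigs}), and then treats Flowerbeds by showing that optimal play consists of alternately cutting opponent's Flowers so as to maximize or minimize the running nim-sum, with the upper and lower nim-sum operators $\text{\sout{$\uparrow$}}$ and $\text{\sout{$\downarrow$}}$ controlling the endgame (Propositions~\ref{prop:n1,b<c}--\ref{prop:n1,b=c}, Lemma~\ref{lem:1w,>2}, Corollary~\ref{cor:1w,>2}, Lemma~\ref{lem:1ww}). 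The ``height at least $2$'' buffer you allude to is indeed the crux, but making it rigorous is exactly what Lemma~\ref{lem:1ww} and Proposition~\ref{prop:1first} do, and they require the nim-sum machinery rather than a strategy-copying argument. Your sketch would need to replace the invariant claim with this case analysis to go through.
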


As a corollary, we can specify a winning strategy under mis\`ere play for disjunctive sum of Shrubs and Generalized Flowers in terms of the winning strategies under normal play:

\begin{cor}
Let $G$ be a game comprised of Green Shrubs and Generalized Flowers, and suppose that $G = G^*$. Then under mis\`ere play, the winning player's strategy is to play as in normal play, until their only winning move under mis\`ere play is to some game $G' = \sum_{i=1}^m G'_i$, where each $G'_i$ is a star-based position of height $1$. 
\end{cor}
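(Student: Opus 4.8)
The plan is to read this corollary as the Hackenbush analogue of Bouton's mis\`ere Nim strategy, with Theorems \ref{thm:shrubs} and \ref{thm:intro} supplying the dictionary between the two play conventions. First I would reduce to a sum of Stalks and Generalized Flowers: by Theorem \ref{thm:shrubs} each Shrub $G_i$ is equivalent in the mis\`ere universe to the Stalk $G'_i$, so the mis\`ere outcome of any disjunctive sum is unchanged when each Shrub is replaced by its Stalk, and in particular the notion of a component of height at least $2$ (hence the hypothesis $G = G^*$) is well defined. Since $G = G^*$ means some $G'_i$ has height at least $2$, Theorem \ref{thm:intro} gives that the normal and mis\`ere outcomes of $G$ coincide; let $P$ denote the player who wins $G$ under both conventions.

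Next I would isolate the governing invariant. For any position $H$ reachable in the course of play that is a disjunctive sum of Shrubs and Generalized Flowers and still has a component of height at least $2$, we have $H = H^*$, so Theorem \ref{thm:intro} applies verbatim and the normal outcome of $H$ equals its mis\`ere outcome. The strategy for $P$ is to follow their normal-play winning strategy, and I would argue by induction on the length of play that this keeps $P$ winning under mis\`ere play as long as the current position retains a height-$\ge 2$ component. Indeed, whenever it is $P$'s turn the normal strategy offers a move from $H$ to a position $H'$ that is a loss for the player to move under normal play; if $H'$ again has a component of height at least $2$, then its mis\`ere outcome equals its normal outcome, so $H'$ is also a mis\`ere loss for the opponent, and after the opponent replies the invariant is restored.

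The substantive part of the argument is the transition out of the height-$\ge 2$ regime, which is precisely where the two conventions can diverge and where the normal strategy may cease to be mis\`ere-correct. Because every legal move strictly decreases the number of edges, play must eventually reach a point at which every winning continuation for $P$ leaves a position $G' = \sum_i G'_i$ whose components are all star-based of height $1$; for such all-height-$1$ positions one has $H^* = H + *$, so by Theorem \ref{thm:intro} the relevant mis\`ere outcome is that of the evil twin $H + *$ rather than of $H$ itself. I would verify directly, in the style of the mis\`ere Nim endgame, that at this boundary $P$ can move to an all-height-$1$ position of the correct parity and that this is genuinely a winning mis\`ere move, translating each required outcome via ``mis\`ere outcome of $H$ equals normal outcome of $H+*$.'' This endgame computation --- pinning down that the unique winning mis\`ere move at the boundary is exactly to such a $G'$ --- is the main obstacle, since everything in the height-$\ge 2$ regime follows formally from Theorem \ref{thm:intro}, whereas the boundary case requires resolving the $+\,*$ flip by hand.
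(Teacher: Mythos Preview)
Your approach is sound and takes a cleaner route than the paper's. The paper derives the corollary by pointing back to the explicit mis\`ere winning strategies constructed in its proofs (in particular the ``Moreover'' clauses of Theorems~\ref{thm:1win} and~\ref{thm:o+o-} and the remark following Theorem~\ref{shrubs2}), which are built so that each winning move is simultaneously correct under both conventions while a component of height $\ge 2$ remains. You instead extract the strategy purely from the outcome statement of Theorem~\ref{thm:intro}: since any follower $H'$ retaining a height-$\ge 2$ component satisfies $o^+(H')=o^-(H')$, a move to such an $H'$ is normal-winning if and only if it is mis\`ere-winning, so $P$ may always select a normal-winning move landing in the height-$\ge 2$ regime whenever one exists. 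This shows the corollary is a formal consequence of the outcome theorem, without re-inspecting the explicit constructions.

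One overcomplication to flag: the boundary is not the obstacle you describe. Your invariant already guarantees that $P$ wins the current position under mis\`ere play (each move you have made was mis\`ere-winning, not merely normal-winning), so at the moment every mis\`ere-winning continuation lands in an all-height-$1$ position, such a continuation exists automatically. No direct Nim-style parity verification is needed, and the $+\,*$ flip of Theorem~\ref{thm:intro} only enters if one wants to spell out the post-boundary strategy, which the corollary does not ask for. You should, however, be explicit that ``the normal strategy'' means choosing, among $P$'s normal-winning moves, one that stays in the height-$\ge 2$ regime when available; your equivalence shows this is always possible precisely up to the boundary.
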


Let $\mathcal{G}^*$ be the set of positions $G$ with evil twin $G^*=G+*$. Then the corollary follows from the fact that, by following the winning strategy outlined in the proof of Theorem \ref{thm:shrubs} the winning player can win by making each move to a game not in $\mathcal{G}^*$ until their only winning move is to a game in $\mathcal{G}^*$, and then making each move to a game in $\mathcal{G}^*$ for the rest of the game. We also provide specific winning strategies under mis\`ere play for such games $G' = \sum_{i=1}^m G'_i$, where each $G'_i$ is a star-based position of height $1$. This reduces mis\`ere play Hackenbush Shrubs and Flowers to understanding the normal play version, and generalizes similar results for the relationship between normal and mis\`ere play outcomes and strategies in nim (\cite{nim}) and Hackenbush Sprigs (\cite{sprigs}).

In Section \ref{Shrubs}, we prove Theorem \ref{thm:shrubs}, which says that any Shrub is equivalent to the same nim heap in both play conventions. In Section \ref{npGF}, we summarize some known results for normal play Hackenbush Flowers. In Section \ref{GenFlowers}, we specify the outcome classes of sums of Generalized Sprigs and Stalks (Theorem \ref{thm:Generalized Sprigs}) and show that Theorem \ref{thm:intro} applies for these games. We then show that Theorem \ref{thm:intro} holds for all sums of Generalized Flowers and Stalks that are not sums of Generalized Sprigs (Theorem \ref{thm:main}).

\subsection{Preliminaries}\label{prelim}

We introduce here the concepts pertinent to the ensuing discussion. Relevant definitions can also be found in \cite{WW} and \cite{ONAG}.

A game $G$ is a position, defined recursively by $G = \{\mathcal{G}^L| \mathcal{G}^R\}$, where the set of left options $\mathcal{G}^L$ is the set of positions that Left can move to, and the set of right options $\mathcal{G}^R$ is the set of positions that Right can move to. Left and right options are also referred to in the literature as left and right followers respectively. A combinatorial game is \textit{impartial} if, at every stage, the set of left options is equal to the set of right options, and \textit{partizan} if they may differ.

In impartial combinatorial games, there are two outcome classes: \textit{Next-player win ($\mathcal{N}$)}; and \textit{Previous-player win ($\mathcal{P}$)}.  In partizan combinatorial games, there are two additional outcome classes: \textit{Left-player win ($\mathcal{L}$)}; and \textit{Right-player win ($\mathcal{R}$)}. Under both normal and mis\`ere play, the outcome classes are partially ordered as in Figure \ref{partialorder}. Better , or larger, outcomes under the partial ordering are more advantageous for the player Left.

		\begin{figure}[h!]
		\centering

\begin{tikzpicture}[scale=1.5]
\draw (1+0.3,0+0.3)--(2-0.3,1-0.3);
\draw (1-0.3,0+0.3)--(0+0.3,1-0.3);
\draw (1-0.3,2-0.3)--(0+0.3,1+0.3);
\draw (1+0.3,2-0.3)--(2-0.3,1+0.3);

\draw [draw=white] (1,2)--(1,2) node [midway] {$\mathcal{L}$};
\draw [draw=white] (1,0)--(1,0) node [midway] {$\mathcal{R}$};
\draw [draw=white] (0,1)--(0,1) node [midway] {$\mathcal{P}$};
\draw [draw=white] (2,1)--(2,1) node [midway] {$\mathcal{N}$};
\end{tikzpicture}

		\caption{The partial order of outcome classes}
		\label{partialorder}
		\end{figure}
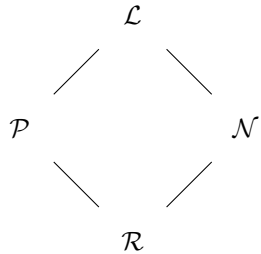

The outcome functions $o^+(G)$ and $o^-(G)$ map a game $G$ to its outcome class under normal and mis\`ere play respectively. If $o^+(G) = \mathcal{O}$ for $\mathcal{O}$ an outcome class, we write $G \in \mathcal{O}^+$, and similarly under mis\`ere play. For example, $o^+(0)=\mathcal{P}$, $o^-(0)=\mathcal{N}$ and $0 \in \mathcal{P}^+ \cap \mathcal{N}^-$.

The \textit{disjunctive sum} of two games $G$ and $H$ is defined recursively by $G+H = \{\mathcal{G}+\mathcal{H}^L, \mathcal{G}^L+\mathcal{H}  ~|~ \mathcal{G}+\mathcal{H}^R, \mathcal{G}^R+\mathcal{H}\}$. The \textit{ordinal sum} is defined recursively by $G:H = \{\mathcal{G}:\mathcal{H}^L, \mathcal{G}^L ~|~ \mathcal{G}:\mathcal{H}^R, \mathcal{G}^R\}$. Note that these definitions coincide with the earlier definitions of disjunctive and ordinal sums of Hackenbush games. 

Given an impartial combinatorial game $G$, its \textit{nim-value}, $g(G)$, is defined recursively as the smallest non-negative integer that is not the nim-value of an option of $G$, where $g(0)=0$ under normal play and $g(0)=1$ under mis\`ere play. To avoid confusion, we will let $g^+(G)$ denote the nim-value of a game under normal play, and let $g^-(G)$ denote the nim-value under mis\`ere play. It is well known that $G \in \mathcal{P}^+$ if and only if $g^+(G)=0$, and the normal-play canonical form of any impartial game $G$ is given by the nim heap $*_{g^+(G)}$ \cite{ONAG}. By contrast, little is known about the relationship between mis\`ere outcome classes and nim-values.

Two games $G$ and $H$ are said to be equal under normal play if $o^+(G+X) = o^+(H+X)$ for all games $X$, and equal under mis\`ere play if $o^-(G+X) = o^-(H+X)$ for all games $X$. We write $G =^+ H$ and $G=^-H$ for equality under normal and mis\`ere play respectively. Notions of inequality are similarly defined. We say $G \geq^+ H$ if $o^+(G+X) \geq o^+(H+X)$ for all games $X$, and $G \geq^- H$ if $o^-(G+X) \geq o^-(H+X)$ for all games $X$, where comparisons of outcome classes are with respect to the partial ordering in Figure \ref{partialorder}.

Under normal play, any left option $A$ of a game $G$ is said to be \textit{dominated} by another left option $B$ of $G$ if $A \leq^+ B$, and similarly any right option $C$ of a game $G$ is dominated by another right option $D$ of $G$ if $C \geq^+ D$. Any left option $A$ of a game $G$ is said to be \textit{reversible} if it has a right option $A^R$ such that $A^R \leq G$, and similarly any right option $C$ of $G$ is said to be reversible if it has a left option $C^L$ such that $C^L \geq G$. Similar definitions hold under mis\`ere play.
 
The canonical form of a game is obtained by removing all dominated and reversible options, and each game has a unique canonical form under each play convention (\cite{WW, ONAG, canmis}). Examples of canonical forms of games include the \textit{nim-heaps} $0=\{\cdot | \cdot\}$, $* =*_1= \{0|0\}$ and $*_n=\{0,*_1, \ldots, *_{n-1}|0,*_1, \ldots, *_{n-1}\}$, which correspond to nim piles of size $n$.

The \textit{negative} of a game $G$ is defined recursively by $-G = \{-\mathcal{G}^R|-\mathcal{G}^L\}$. Note that the negative of a game is its additive inverse in normal play, that is, $G+(-G)=0$, but this is not necessarily true in mis\`ere play. Hence, to avoid incorrect canceling, we will often write $\bar{G}$ to represent $-G$. The game $G-H$ is defined as the disjunctive sum of $G$ and $-H$.

Under normal play, there exist easy tests for equality and inequality: $G = 0 \text{ if and only if } G \in \mathcal{P}$, $G = H \text{ if and only if } G -H=0$, and $G > H \text{ if and only if } G -H>0$. However, under mis\`ere play, almost all known tests for equality and inequality require checking all possible games $X$ \cite{canmis}. To counter this, Plambeck \cite{misquo} introduced the notion of restricting the universe $\mathcal{X}$ to which $X$ may belong:
\vspace{-5pt}
\begin{eqnarray*}
G \equiv^- H \text{ (mod $\mathcal{X}$) if } o^-(G+X) = o^-(H+X) \text{ for all games } X \in \mathcal{X}\\
G \geq^- H \text{ (mod $\mathcal{X}$) if } o^-(G+X) \geq o^-(H+X) \text{ for all games } X \in \mathcal{X}.
\end{eqnarray*}

Some of the universes of interest to us are as follows. The universe $\mathcal{H}_*$ is the universe of all disjunctive sums of star-based Hackenbush positions. The universe $\mathcal{D}$ is the universe of all games such that, at any stage, either both players have a move or neither player has a move. These games are called \textit{dicot} games. We remark that $\mathcal{D}$ is closed under disjunctive sums, and $\mathcal{H}_* \subset \mathcal{D}$.
 
By restricting $\mathcal{X}$ to the universe of positions of a given combinatorial game, Plambeck and Siegel were able to relate the normal and mis\`ere play outcome classes of a large class of impartial mis\`ere games, and classify the outcome classes of many specific games. Using the same idea, Allen \cite{Allen} and McKey et al. \cite{sprigs} classified certain classes of partizan mis\`ere games. In particular, in \cite{Allen}, Allen showed that $*+*$ is equivalent to $0$ in the dicot universe. This is of interest, since $*+*$ is not equivalent to $0$ in any universe containing a game equivalent to a Hackenbush position with one blue edge.

For any $m, n \in \mathbb{N}$, their \textit{nim-sum} $m \oplus n$ is given by taking the bitwise xor. The \textit{upper nim-sum} is defined by $m ~\text{\sout{$\uparrow$}}~ n := 
		\mathop{\mbox{max}}_{0 \leq n' \leq n} m \oplus n'$ and the \textit{lower nim-sum} by 
$m ~\text{\sout{$\downarrow$}}~ n := 
		\mathop{\mbox{min}}_{0 \leq n' \leq n} m \oplus n'$
When taking the composition of nim-sum operations, we will always let the order of operations be from left to right.

It is simple to verify that the upper and lower nim-sums $m ~\text{\sout{$\uparrow$}}~ n$ and $m ~\text{\sout{$\downarrow$}}~ n$ can also be computed using the binary expansions of $m$ and $n$, as in the following lemma.

\begin{lem}[{Berlekamp, \cite{notebook}}]\label{lem:updown} Let $m,n \in \mathbb{N}$ have binary expansions $m = \sum_{i=0}^l a_i 2^i$ and $n = \sum_{i=0}^l b_i2^i$, $a_i, b_i \in \{0,1\}$, where $l$ is the largest integer such that either $a_l=1$ or $b_l=1$. Let $\alpha$ be the largest integer $j$ such that $a_j = b_j = 1$, and if no such integer exists let $\alpha=-1$. Let $\beta$ be the largest integer $j$ such that $a_j=0, b_j=1$, and if no such integer exists let $\beta = -1$. Then
\begin{eqnarray*}
m ~\text{\sout{$\uparrow$}}~ n = \sum_{i=0}^{\alpha} 2^i + \sum_{i=\alpha+1}^l a_i \oplus b_i \cdot 2^i, 
~~~~~ m ~\text{\sout{$\downarrow$}}~ n = \sum_{i=\beta+1}^l a_i \oplus b_i \cdot 2^i. 
\end{eqnarray*}
\end{lem}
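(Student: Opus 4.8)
The plan is to handle the two formulas separately by the same two-step method: first exhibit an explicit $n'$ with $0 \le n' \le n$ that attains the claimed value, and then show no admissible $n'$ can beat it. Throughout I will use the elementary fact that, for numbers written in binary, $n' \le n$ holds if and only if either $n' = n$ or, reading from the most significant bit downward, the first position $k$ at which the bit of $n'$ differs from that of $n$ has a $0$ in $n'$ and a $1$ in $n$. Note also that since $n' \le n < 2^{l+1}$, every quantity $m \oplus n'$ has bits only in positions $0$ through $l$.

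For the upper nim-sum $m~\text{\sout{$\uparrow$}}~n$, denote the right-hand side by $U$, so that the bit of $U$ in position $i$ is $1$ for $i \le \alpha$ and $a_i \oplus b_i$ for $i > \alpha$. First I would verify attainability by taking $n^*$ with bits equal to $b_i$ for $i > \alpha$, equal to $0$ in position $\alpha$, and equal to $1-a_i$ for $i < \alpha$; since $n^*$ agrees with $n$ above $\alpha$ and has a $0$ where $n$ has a $1$ (recall $b_\alpha = 1$), we get $n^* \le n$, and a direct computation gives $m \oplus n^* = U$, using $a_\alpha = 1$ to collapse $\sum_{i<\alpha} 2^i$ together with the position-$\alpha$ bit into $\sum_{i=0}^{\alpha} 2^i$. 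For optimality, suppose some $n' \le n$ gave $m \oplus n' > U$, and let $k$ be the highest position where the bits of $m \oplus n'$ and $U$ disagree; then this bit is $1$ in $m \oplus n'$ and $0$ in $U$. Since $U$ has all ones up to $\alpha$, necessarily $k > \alpha$, whence $a_k \oplus b_k = 0$; as $k > \alpha$ this forces $a_k = b_k = 0$ by maximality of $\alpha$. Unwinding the bit equalities above position $k$ shows that $n'$ agrees with $n$ there and carries a $1$ where $n$ has a $0$ in position $k$, contradicting $n' \le n$. Hence $U$ is both attained and an upper bound, giving the first formula.

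For the lower nim-sum $m~\text{\sout{$\downarrow$}}~n$, denote the right-hand side by $L$, whose bit in position $i$ is $0$ for $i \le \beta$ and $a_i \oplus b_i$ for $i > \beta$. The argument is the mirror image. I would take $n^*$ with bits $b_i$ for $i > \beta$ and $a_i$ for $i \le \beta$; this agrees with $n$ above $\beta$ and places a $0$ (since $a_\beta = 0$) where $n$ has a $1$, so $n^* \le n$, and one checks $m \oplus n^* = L$. For optimality one supposes $m \oplus n' < L$ for some admissible $n'$, takes the top position $k$ of disagreement (now $0$ in $m \oplus n'$ and $1$ in $L$), and notes $k > \beta$ and $a_k \oplus b_k = 1$. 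Here the maximality of $\beta$ is used in the opposite direction: for $k > \beta$ we cannot have $a_k = 0, b_k = 1$, so the only way to have $a_k \ne b_k$ is $a_k = 1, b_k = 0$, which again forces a $1$ in $n'$ against a $0$ in $n$ at position $k$ with agreement above, contradicting $n' \le n$.

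I expect the only delicate point to be the bookkeeping at the critical position $k$: in each case one must combine the defining maximality of $\alpha$ (respectively $\beta$) with the single disagreeing bit to pin down the exact values of $a_k$ and $b_k$, and then translate the bit pattern back into the numeric inequality $n' > n$. The boundary conventions $\alpha = -1$ and $\beta = -1$ deserve a quick separate check, but in those cases both formulas reduce to $m \oplus n$ and the attainability construction degenerates to $n^* = n$.
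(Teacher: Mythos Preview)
Your proof is correct. The paper does not actually supply a proof of this lemma: it attributes the result to Berlekamp and merely remarks beforehand that it is ``simple to verify,'' so there is no argument in the paper to compare against. Your two-step approach---exhibiting an explicit $n^*$ attaining the claimed value, then deriving a contradiction from any hypothetical improvement by analyzing the top disagreeing bit---is exactly the natural verification the paper defers, and all the bit-level bookkeeping (including the use of the maximality of $\alpha$ and $\beta$ and the boundary cases $\alpha=-1$, $\beta=-1$) checks out.
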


It follows from this formulation of the upper and lower nim-sum that $m ~\text{\sout{$\uparrow$}}~ n$ is commutative and monotonically increasing in both variables and $m ~\text{\sout{$\downarrow$}}~ n$ is monotonically increasing in $m$ and monotonically decreasing in $n$.

Nim sums are essential to the analysis of combinatorial games. In particular, Bouton showed that a nim game is a previous-play win position if and only if the nim-sum of the pile sizes is $0$ \cite{nim}; Sprague and Grundy independently showed that if $G = \sum_{i=1}^n G_i$ is a disjunctive sum of games, $g(G)$ is given by the nim-sum of the $g(G_i)$ \cite{grundy, sprague}; and Berlekamp showed that normal play Hackenbush flowers can be reduced to understanding the behavior of upper and lower nim-sums \cite{notebook}.



\section{Hackenbush Shrubs}\label{Shrubs}
Let $\mathcal{S}$ be the class of all disjunctive sums of Shrubs. In this section, we will show that under mis\`ere play, any Shrub $S$ is equivalent to the nim heap $*_{g^+(S)}$ (Theorem \ref{thm:shrubs}). It follows that the height of any shrub $S$ is given by its normal-play nim-value, $g^+(S)$. Together with known results for nim, this allows us to determine the relationship between the outcomes of $S$ and $S^*$ for any $S \in \mathcal{S}$.

It can easily be shown that the normal-play nim-value of a disjunctive sum of nim heaps $\sum_{i=1}^n *_{m_i}$ is given by the nim-sum of their nim-values, $g^+(\sum_{i=1}^n *_{m_i}) = m_1 \oplus \cdots \oplus m_n$. It is also well known, as shown by Bouton in \cite{nim}, that normal play nim and mis\`ere play nim have the same outcome classes and winning strategies, except when the only winning move is to a position where all the piles are of size $1$. In particular, if we represent a nim pile of size $n$ with a Hackenbush Stalk of height $n$ and define evil twins of games in the same manner, then $o^+(G)=o^-(G^*)$ and $o^+(G^*)=o^-(G)$ for all games of nim.

\begin{lem}[Bouton \cite{nim}]\label{lem:nim}
Let $G = \sum_{i=1}^n *_{k_i}$ be a game of nim, where $n \geq 0$ and $k_i$ are positive integers. Let $G^* = G+*$ if all $k_i$ are equal to $1$ and $G^*=G$ otherwise. Then
\begin{eqnarray*}
o^+(G) = o^-(G^*) = 
\left\{
\begin{array}{cc}
\mathcal{N} & k_1 \oplus \cdots \oplus k_n \neq 0 \\ 
\mathcal{P} & k_1 \oplus \cdots \oplus k_n = 0.
\end{array}
\right.
\end{eqnarray*}
\end{lem}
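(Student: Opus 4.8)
The plan is to reduce both equalities to Bouton's classical analysis of Nim, treating the normal-play half as essentially already recorded and concentrating on the mis\`ere half. For the normal-play outcome I would simply invoke the two facts stated in the preliminaries: the normal-play nim-value of a disjunctive sum of heaps is the nim-sum of the heap sizes, $g^+(G)=k_1\oplus\cdots\oplus k_n$, and $G\in\mathcal{P}^+$ exactly when $g^+(G)=0$. Together these give $o^+(G)=\mathcal{P}$ iff $k_1\oplus\cdots\oplus k_n=0$ and $o^+(G)=\mathcal{N}$ otherwise. The substance is the second equality $o^-(G^*)=o^+(G)$, which I would obtain from the following explicit description of the mis\`ere $\mathcal{P}$-positions, proved by induction on the total number of edges: a position $\sum_i *_{k_i}$ with every $k_i\ge 1$ is a previous-player win under mis\`ere play iff either (i) some $k_i\ge 2$ and $\bigoplus_i k_i=0$, or (ii) every $k_i=1$ and the number of heaps is odd.

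For the induction, the base case is the empty position, which is an $\mathcal{N}$-position under mis\`ere play since the player to move cannot move and therefore wins. When all heaps have size $1$, every move deletes one heap, so a position with $m$ heaps moves only to the position with $m-1$ heaps; by induction this makes odd $m$ a $\mathcal{P}$-position and even $m$ an $\mathcal{N}$-position, matching (ii).

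When some heap has size $\ge 2$ I would split on the nim-sum. If $\bigoplus_i k_i=0$, any move changes the nim-sum to $k_i\oplus k_i'\ne 0$; the crucial observation is that a position with nim-sum $0$ containing a heap of size $\ge 2$ must in fact contain at least two such heaps (one large heap together with heaps of size $1$ can never nim-sum to $0$), so every option still contains a heap of size $\ge 2$ and is an $\mathcal{N}$-position by induction, whence the position is a $\mathcal{P}$-position. If $\bigoplus_i k_i\ne 0$ I would exhibit a move to a $\mathcal{P}$-position: if there are at least two heaps of size $\ge 2$, the standard Bouton reduction to nim-sum $0$ leaves a heap of size $\ge 2$ and hence a $\mathcal{P}$-position by the previous case; if there is exactly one heap of size $\ge 2$, I would instead reduce that heap to $0$ or $1$ so as to leave an odd number of size-$1$ heaps, landing in case (ii). In both situations the position is an $\mathcal{N}$-position, completing the induction.

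To read off the lemma I would compute $G^*$ in the two cases of its definition. If some $k_i\ge 2$ then $G^*=G$, and the description gives $o^-(G)=\mathcal{P}$ iff $\bigoplus_i k_i=0$, matching $o^+(G)$; if every $k_i=1$ then $\bigoplus_i k_i\equiv n\pmod 2$ while $G^*=G+*$ has $n+1$ heaps of size $1$, so by (ii) $o^-(G^*)=\mathcal{P}$ iff $n+1$ is odd iff $n$ is even iff $\bigoplus_i k_i=0$, again matching $o^+(G)$. I expect the main obstacle to be exactly the mis\`ere endgame bookkeeping: the naive ``move to nim-sum $0$'' strategy fails precisely when it would leave an all-ones position with an even number of heaps, so one must both supply the corrected move there and rule out the dual danger of a nim-sum-zero position reaching an odd all-ones position in a single step. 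The one fact that makes the induction go through cleanly is that nim-sum $0$ together with a heap of size $\ge 2$ forces at least two heaps of size $\ge 2$; isolating and deploying this is the delicate point.
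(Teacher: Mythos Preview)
Your argument is correct and is essentially the standard Bouton analysis. Note, however, that the paper does not supply a proof of this lemma at all: it is stated as a citation of Bouton's classical result, with only the surrounding remark that normal and mis\`ere nim coincide except when the only winning move is to an all-ones position. So there is no ``paper's own proof'' to compare against; you have written out the classical proof that the paper takes for granted.
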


Note in particular that from this, the equality $o^+(G^*)=o^-(G)$ also easily holds, as $(G^*)^*$ is either just $G$ itself, or it equals $G+*+*$, in which case $*+*$ can be canceled to give the required result. We remark also that if $G^* \neq G$ then $o^{\pm}(G^*) \neq o^{\pm}(G)$. 

In order to prove the equivalence of Shrubs and nim piles, we first state an essential result in evaluating normal play nim-values that will be useful in our proof of the main theorem. It is a special case of the `Colon Principle' outlined by Berlekamp, Conway and Guy.

\begin{lem}[Berlekamp, Conway and Guy, \cite{WW}]\label{lem:colon}
	Let $G$ and $H$ be impartial Hackenbush games. Then $g^+(G:H) = g^+(G)+g^+(H)$. 
\end{lem}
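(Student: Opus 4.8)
The plan is to prove the identity in two moves: first establish the substitution form of the Colon Principle---that $g^+(G:H)$ depends on the top $H$ only through its nim-value $g^+(H)$---and then read off the additive formula by specializing $H$ to a Stalk and using that the base $G$ of a Hackenbush ordinal sum is itself a Stalk.

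For the first step I would fix $G$ and show that $g^+(H)=g^+(H')$ implies $g^+(G:H)=g^+(G:H')$. Since every game in sight is impartial, this is equivalent to showing that $G:H + G:H'$ is a $\mathcal{P}^+$-position, which I would prove by an explicit second-player strategy in the compound game. The invariant to maintain is that the two summands have the form $G:K$ and $G:K'$ with $g^+(K)=g^+(K')$. There are two kinds of opponent move to answer. A move in the top of one summand, say $G:K \to G:K^{*}$ with $K^{*}$ an option of $K$, is really a move in the subgame $K+K'$; because $g^+(K)=g^+(K')$ this subgame has nim-value $0$, so by Sprague--Grundy the second player can respond inside the tops and return $K+K'$ to nim-value $0$, thereby restoring $g^+(K)=g^+(K')$. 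A move in the base, $G:K \to G^{*}$ for an option $G^{*}$ of $G$ (which deletes the entire top, by the definition of the ordinal sum), is answered by the identical base move in the other summand, reaching $G^{*}+G^{*}$; from a sum of two identical components the second player simply mirrors and wins.

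For the second step I would take $H'$ to be the Stalk of height $g^+(H)$, whose nim-value is its height, so that $g^+(G:H)=g^+\bigl(G:\,\text{Stalk of height } g^+(H)\bigr)$. Here it matters that the base $G$ of a Hackenbush ordinal sum is a Stalk: the support condition forces every edge of $G$ to be a bridge separating the top from the ground, so the edges of $G$ form a single path and $g^+(G)$ equals its height. Stacking a Stalk of height $g^+(H)$ on top of a Stalk of height $g^+(G)$ yields a Stalk of height $g^+(G)+g^+(H)$, whose nim-value is $g^+(G)+g^+(H)$, giving the claim.

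I expect the main obstacle to be the strategy argument of the first step, and specifically the bookkeeping between the two move-types. The delicate point is the base move: because playing in $G$ erases the whole top, the response cannot be a mirror in the tops but must be the matching base move, after which the position collapses to two identical components and a genuine mirroring strategy takes over. I would also check the degenerate cases---an empty base and a top of nim-value $0$---to seed the induction cleanly.
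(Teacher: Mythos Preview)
The paper does not supply a proof of this lemma; it is quoted as a known result from \cite{WW}. Your two-step plan is correct and is essentially the classical argument behind the Colon Principle: the second-player strategy in $G:H + G:H'$ when $g^+(H)=g^+(H')$, answering a top move via Sprague--Grundy in the subgame $K+K'$ and answering a base move by the identical base move (collapsing to a mirrored sum $G^{*}+G^{*}$), is exactly the proof given in \cite{WW}. The specialization of $H$ to a Stalk and the stacking-of-Stalks computation are likewise standard.

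Your observation that the Hackenbush support condition forces the base $G$ to be a Stalk is both correct and essential, and it is worth noting that without it the additive formula is false. For the abstract ordinal sum of arbitrary impartial games one can have $g^+(G:H)\neq g^+(G)+g^+(H)$: take $G$ to be a Y-shaped green tree (one grounded edge with two further edges at its top vertex), so $g^+(G)=\mathrm{mex}\{0,2,2\}=1$, yet $g^+(G:*)=\mathrm{mex}\{0,2,2,1\}=3\neq 1+1$. Thus the lemma must be read as a statement about Hackenbush ordinal sums in the paper's sense (where $g_1$ supports $g_2$), which indeed forces $G$ to be a Stalk whenever $H$ is nonempty; under that reading your proof is complete, and the degenerate case $H=0$ is trivial since $G:0=G$.
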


Using Lemma \ref{lem:colon}, we may prove the following restatement of Theorem \ref{thm:shrubs}.

\begin{thm}\label{shrubs2}
Let $S$ be a Shrub with normal play nim-value $n=g^+(G)$. Then for any game $T$ in the entire mis\`ere universe,
\begin{eqnarray*}
o^-(S+T) = o^-(*_n+T).
\end{eqnarray*}
\end{thm}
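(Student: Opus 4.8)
The plan is to prove that $S \equiv^- *_n$ in the entire mis\`ere universe, which is a very strong statement (equivalence everywhere, not just in a restricted universe). Since Shrubs are green impartial games, the cleanest route is an induction on the structure of $S$ together with the recursive definition of mis\`ere equivalence. Recall that two impartial games are mis\`ere-equivalent if and only if one can be transformed into the other by the canonical-form reductions (removing dominated and reversible options); equivalently, the mis\`ere canonical form is a complete invariant. So the target is to show that $S$ and $*_n$ have the same mis\`ere canonical form, or at least that $S$ reduces to $*_n$ via reversibility/domination.

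First I would set up the induction on the number of edges of $S$ (or on rank). A Shrub $S$ is an ordinal sum $* : S'$ where $S'$ is the rooted forest hanging above the single ground edge; more usefully, its options are obtained by cutting an edge, and cutting the ground edge yields $0$. By Lemma~\ref{lem:colon} (the Colon Principle), $g^+(S) = g^+(*) + g^+(S'') $-type relations let me track how the normal-play nim-value $n$ behaves under each move. The key structural fact I want to exploit is that every option $S^L$ of $S$ is itself (a disjunctive sum of) Shrubs, so by the induction hypothesis each option is mis\`ere-equivalent to $*_{m}$ for $m = g^+(S^L)$. Thus in the mis\`ere universe, $S$ has the same options, up to equivalence, as the game $\{*_{m} : m = g^+(S^L)\}$, and the set of values $\{g^+(S^L)\}$ is (by the normal-play theory and the mex rule) exactly a set whose mex is $n$ and which includes $0$ (from cutting the ground edge, since $S$ is grounded by one edge).

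The heart of the argument is then a reduction lemma: a game all of whose options are mis\`ere-equivalent to nim-heaps, where the multiset of heap-sizes has mex $n$ and contains every value $0,1,\dots,n-1$ (and possibly larger "extra" heaps), is itself mis\`ere-equivalent to $*_n$. This is where reversibility does the work: any option $*_{m}$ with $m > n$ or any repeated option must be shown reversible through $S$ back to an option equivalent to $*_n$, and I would verify the reversibility condition $A^R \geq^- S$ (here $A^R$ is itself a nim-heap by induction) by comparing with $*_n$ using the partial order and the known mis\`ere behavior of sums of nim-heaps from Lemma~\ref{lem:nim}. After reversing out the high options and deleting duplicates, the surviving options are exactly $\{0,*,\dots,*_{n-1}\}$, which is precisely the canonical form of $*_n$.

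The main obstacle I expect is justifying reversibility in the full mis\`ere universe rather than in a restricted universe such as $\mathcal{D}$ or $\mathcal{H}_*$: reversibility requires an inequality $A^R \geq^- S$ that must hold against all test games $X$, and mis\`ere inequalities are notoriously delicate because there is no additive inverse and cancellation fails. I would handle this by arguing directly from the recursive mis\`ere-equivalence criterion for impartial games (two impartial games are equal in the mis\`ere monoid iff they have identical canonical forms), so that once I show $S$ and $*_n$ share a canonical form I get equality against every $T$ for free, sidestepping the need to verify $o^-(S+T)=o^-(*_n+T)$ test-game by test-game. The secondary difficulty is the bookkeeping that the option-value set of a Shrub really does contain $\{0,1,\dots,n-1\}$ and has mex $n$; this follows from the normal-play nim-value theory and Lemma~\ref{lem:colon}, but I would state it as an explicit sublemma before invoking the reduction.
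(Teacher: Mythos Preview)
Your approach via canonical forms has a genuine circularity at its core step. To eliminate the ``extra'' options $*_m$ with $m>n$ via reversibility, you must exhibit an option of $*_m$---namely $*_n$---satisfying $*_n \leq^- S$. But $S$ and $*_n$ are impartial, and for impartial games the mis\`ere partial order collapses: if $*_n \leq^- S$ then, applying the Left/Right involution (which fixes both $*_n$ and $S$ but reverses the order on outcomes), one obtains $*_n \geq^- S$, hence $*_n =^- S$, which is precisely the conclusion. Your proposed sidestep (``show the canonical forms coincide'') does not escape this, because computing the mis\`ere canonical form of $S$ \emph{is} the reduction step you cannot justify without already knowing $S\equiv^- *_n$. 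The reduction lemma you state is in fact true, but the naive domination/reversibility rules do not prove it; you would need either Conway's tame-game/genus theory or a direct outcome argument, neither of which you have supplied.

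The paper takes the direct route and bypasses canonical forms entirely: a strategy-stealing argument by induction on $n=g^+(S)$. Given a winning strategy for Left in $*_n+T$, Left plays $S+T$ by tracking the normal-play nim-value of the $S$-component. Whenever Right moves in $S$ to a follower of nim-value exceeding $n$, Left immediately moves back to one of nim-value exactly $n$ (possible by the mex definition); otherwise Left translates her $*_n+T$ strategy, replacing each move $*_n\to *_k$ by a move in $S$ to a follower of nim-value $k$. Since the game is finite, eventually the $S$-component reaches some $S''$ with $g^+(S'')=n''<n$, and then the inductive hypothesis gives $S''\equiv^- *_{n''}$, finishing the argument. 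This is shorter, elementary, and proves the full mis\`ere-universe statement without any appeal to canonical-form machinery.

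One small structural correction: the options of a Shrub are again single Shrubs (cutting any edge leaves either the empty position or a connected green graph with a single degree-one grounded vertex), not disjunctive sums of Shrubs.
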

\begin{proof}
We prove this by induction on $n$.

\begin{itemize}
\item \textit{Base case: $n=0$}\\
We show that $S$ must be empty. Otherwise, $S=*:\hat S$ for some Green Hackenbush game $\hat S$, so by Lemma \ref{lem:colon}, the normal play nim-value of $S$ is $0=n=1+g^+(\hat S) \geq 1$, which is a contradiction. Hence $S = 0$ identically.
\item \textit{Inductive step}\\
Suppose that Left wins $*_n + T$. Then she has a winning strategy in $S+T$ as follows. If Right has just played in $S$ to move to some option $S^R$ with a larger nim-value $g^+(S^R)>n$, then Left plays in $S^R$ to move it to some option $S^{RL}$ with nim-value $g^+(S^{RL}) = n$. This is possible as, by definition of the nim-value, for any $m < g^+(S^R)$, $S^R$ has a follower with nim-value $m$. Otherwise, Left mimics her winning strategy in $*_n + T$, treating any game $S'$ in the game tree of $S$ with normal play nim-value $n' \leq n$ just as the nim heap $*_{n'}$. Eventually one of the players moves in the $S$ portion of the game to bring it to some game $S''$ with normal play nim-value $n'' < n$, and the resulting overall position is of the form $S''+T''$. If this player is Left, then Left wins $*_{n''}+T''$ playing second, so by induction Left wins $S''+T''$ playing second. If this player is Right, then the previous position was of the form $S' + T''$ where $g^+(S')=n$ and Left wins $*_n + T''$ playing second. Hence Left wins $*_{n''}+T''$ playing first, and by induction Left wins $S''+T''$ playing first. Thus Left wins.
\end{itemize}
\end{proof}

We note that Theorem \ref{shrubs2} means that the height of a Shrub $S$ is given by its normal play nim-value, as the Stalk of height $g^+(S)$ is equivalent to $S$, and if any Shrub $S'$ equivalent to $S$ has height greater than $g^+(S)$ then it also has normal play nim-value greater than $g^+(S)$, which is not possible as equivalence under mis\`ere play implies equivalence under normal play. We remark also that if Left has just moved to a game $G$ such that $G^*=G+*$ and Left can win, then by following the strategy specified above, she can win by only moving to games $G'$ such that $G'^* = G'+*$ throughout the rest of the game.  

With this in mind, the relationships between the outcomes of a disjunctive sum of Shrubs and those of its twin follow as a simple corollary of Theorem \ref{thm:shrubs}.

\begin{thm}\label{prop:shrubs}
Let $S \in \mathcal{S}$ be a disjunctive sum of Shrubs. Then
\begin{eqnarray*}
o^+(S) = o^-(S^*), \text{ and } o^+(S^*)=o^-(S).
\end{eqnarray*}
\end{thm}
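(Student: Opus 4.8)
The plan is to reduce the statement to Bouton's Lemma (Lemma~\ref{lem:nim}) by replacing $S$ with an equivalent game of nim. Write $S = \sum_{i=1}^n S_i$ as a disjunctive sum of Shrubs and set $N = \sum_{i=1}^n *_{g^+(S_i)}$. Since every Shrub contains at least one edge (its root has degree one), the base case of Theorem~\ref{shrubs2} forces $g^+(S_i) \geq 1$ for every $i$; hence $N$ is a legitimate game of nim with positive heap sizes, and its evil twin $N^*$ is well defined by Lemma~\ref{lem:nim}.

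First I would establish the two equivalences $S =^+ N$ and $S \equiv^- N$. For the normal-play equality, by Sprague--Grundy $g^+(S+X) = \bigl(\bigoplus_i g^+(S_i)\bigr) \oplus g^+(X) = g^+(N+X)$ for every game $X$, so $S$ and $N$ have equal normal-play outcome together with any added game, i.e.\ $S =^+ N$. For the misère equivalence, I would apply Theorem~\ref{shrubs2} one Shrub at a time: treating $S_2 + \cdots + S_n + X$ as the test game $T$ gives $o^-(S + X) = o^-(*_{g^+(S_1)} + S_2 + \cdots + S_n + X)$, and iterating over $S_2, \ldots, S_n$ yields $o^-(S+X) = o^-(N+X)$ for every game $X$ in the entire misère universe, so $S \equiv^- N$.

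Next I would check that the evil twins are parallel, that is, $S^* = S + \varepsilon$ and $N^* = N + \varepsilon$ for one and the same $\varepsilon \in \{0,*\}$. By Theorem~\ref{thm:shrubs} the Stalk $G_i'$ equivalent to $S_i$ is exactly $*_{g^+(S_i)}$, whose height as a Stalk equals $g^+(S_i)$. Therefore the condition defining the evil twin of $S$ (some $G_i'$ has height at least $2$, versus all $G_i'$ have height $1$) coincides exactly with the condition in Lemma~\ref{lem:nim} (some heap size $g^+(S_i) \geq 2$, versus all heap sizes equal $1$). Hence $\varepsilon = 0$ when some $g^+(S_i) \geq 2$ and $\varepsilon = *$ when all $g^+(S_i) = 1$, giving $S^* = S + \varepsilon$ and $N^* = N + \varepsilon$ simultaneously.

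Finally I would assemble the two chains. For the first identity, $o^+(S) = o^+(N) = o^-(N^*)$ by $S =^+ N$ and Lemma~\ref{lem:nim}, while $o^-(N^*) = o^-(N + \varepsilon) = o^-(S + \varepsilon) = o^-(S^*)$ using $S \equiv^- N$; together these give $o^+(S) = o^-(S^*)$. For the second, $o^+(S^*) = o^+(S + \varepsilon) = o^+(N + \varepsilon) = o^+(N^*)$ by $S =^+ N$, and $o^+(N^*) = o^-(N) = o^-(S)$ by the remark following Lemma~\ref{lem:nim} together with $S \equiv^- N$, yielding $o^+(S^*) = o^-(S)$. The one point requiring care is the bookkeeping that identifies $S^*$ with $S + \varepsilon$ and $N^*$ with $N + \varepsilon$ for the \emph{same} $\varepsilon$: this is precisely what allows the single equivalence $S \equiv^- N$ to transport Bouton's nim results back to $S$, after which the remaining steps are routine substitution.
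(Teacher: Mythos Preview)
Your proof is correct and follows exactly the approach the paper intends: replace each Shrub by its equivalent nim-heap via Theorem~\ref{thm:shrubs}/\ref{shrubs2}, observe that the evil-twin definitions for $S$ and $N$ agree, and invoke Lemma~\ref{lem:nim}. The paper compresses all of this into a single sentence, so your write-up is simply a detailed unpacking of that same argument (one minor quibble: your Sprague--Grundy line is phrased for arbitrary $X$, but strictly speaking $g^+(X)$ is only defined for impartial $X$; this is harmless since you only ever apply it with $X = \varepsilon \in \{0,*\}$).
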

\begin{proof}
Taking the equivalent nim heap for each Shrub, as in Theorem \ref{thm:shrubs}, and comparing with Lemma \ref{lem:nim} yields the result. 
\end{proof}

\section{Normal Play Generalized Flowers}\label{npGF}

Let $\mathcal{F}$ be the class of all disjunctive sums of Generalized Flowers and Stalks. The \textit{Blossom} of a Generalized Flower is the Red-Blue Hackenbush game supported by the stem. 

In normal play, a finite Red-Blue Hackenbush game $G$ has a corresponding \textit{value} $x = \frac{a}{2^b}$ \cite{ONAG}. This is the dyadic number obtained recursively as the unique number such that $b$ is minimal, $a$ is odd and $x_L < x < x_R$, where $x_L$ is the largest value over all left options of $G$, and $x_R$ is the smallest value over all right options of $G$, given that the empty game has value $0$. For example, the Red-Blue Hackenbush game consisting of a single blue edge, $\{0| \cdot\}$, has value $1$, a single red edge $\{\cdot, 0\}$ has value $-1$, and a game consisting of both a blue and red edge, $\{-1|1\}$, has value $0$.

Such a value is well-defined for all Red-Blue Hackenbush games, and is sufficient to classify the outcome class of the game: a game $G$ with value $x$ is a Left player win position for $x>0$, a Right player win position for $x<0$ and a Previous player win position for $x=0$ (\cite{WW, ONAG}). If $x>0$ the largest left option has non-negative value and all right options also have positive value; if $x<0$ the smallest right option has non-positive value and all left options have negative value; and if $x=0$ all left options have negative value and all right options have positive value.

Under both play conventions, a Flower has canonical form $*_h:x$, where $h$ is a positive integer and $x$ is an integer. If $x>0$ the Flower is \textit{blue}, if $x<0$ the Flower is \textit{red}, and if $x=0$ the Flower is a Stalk. We note that in normal play, if $x+y=0$, then $*_h:x=\overline{*_h:y}$ so that $(*_h:x)+(*_h:y)=0$. Hence in normal play we may assume that no red Flower and blue Flower have the same height and blossom value.

The outcome under normal play of a game of Flowers is a long-standing question by Berlekamp, which can be phrased as follows (\cite{WW}).

\textit{Who wins $\sum_{i=1}^n *_{h_i}:x_i$ where $h_i$ are positive integers and $x_i$ integers?}

The following results for normal play Flowers are outlined in \cite{WW} as well as \cite{notebook} and will be useful in our discussion of mis\`ere play Generalized Flowers. 

\begin{lem}[Conway's Two-Ahead Ploy \cite{WW}]\label{lem:N2ahead} Let $F$ be a game of Flowers. If $F$ has a blue Flower but no red Flower, then Left has a winning move. Consequently, if $F$ has at least two more blue Flowers than red ones, then Left wins by cutting in the stem of any red Flower.
\end{lem}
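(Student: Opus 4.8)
The plan is to prove the two assertions in turn, treating the first as the real content and deriving the second from it. For the first assertion, write $F$ as a disjunctive sum of blue Flowers and Stalks; since there are no red Flowers, the only edges are green stems and blue blossom edges. The key structural observation is that \emph{Right can never move in a blossom}: every blossom edge is blue, so Right's only legal moves are cuts of green stem edges. Let $\nu(F)$ denote the nim-value of the \emph{green skeleton}, i.e.\ the XOR of the heights of all stems and Stalks; by Sprague--Grundy this is the nim-value of the Green Hackenbush position obtained by deleting every blossom, each Stalk of height $h$ contributing $*_h$. I would exhibit a strategy for Left, moving first, that maintains the invariant $\nu = 0$ after each of her moves.

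Left's strategy is as follows. Whenever she faces a position with $\nu \ne 0$, she makes the ordinary Nim move on the green skeleton that restores $\nu = 0$ (a legal green cut of the appropriate stem or Stalk edge); whenever she faces a position with $\nu = 0$, she instead decrements a single blue blossom, which leaves every stem height, and hence $\nu$, unchanged. The first move type always exists when $\nu \ne 0$, and the second is available on her opening move because $F$ contains a blue Flower. One then checks that Left faces $\nu = 0$ \emph{only} on her opening move: after it Right always moves from a $\nu = 0$ position, and being confined to green cuts, any such cut changes exactly one height and forces $\nu \ne 0$, which Left restores. So Left always leaves $\nu = 0$ while Right always leaves $\nu \ne 0$. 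Since the empty position has $\nu = 0$, it can be reached only immediately after a move of Left, so it is Right who is eventually left unable to move. The essential point, and the role of the hypothesis ``no red Flower'', is that Right has no reserve move to imitate Left's single blossom decrement, so Left gains exactly the one tempo she needs.

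For the second assertion I would argue by induction on the number of edges that any garden $H$ with (number of blue Flowers) $-$ (number of red Flowers) $\ge 2$ is a win for Left moving first, via the move ``cut the ground edge of a red Flower'' when one is present, and via the first assertion when none is. Cutting the ground edge of a red Flower deletes that whole Flower, producing a position with the margin raised to $\ge 3$ and Right to move; it then suffices to check that \emph{every} reply of Right from this position again leaves the margin $\ge 2$ while strictly decreasing the edge count. This is a routine case check: a green cut of a blue stem lowers the margin by exactly $1$ (to $\ge 2$), a green cut of a red stem or a move in a red blossom can only raise the margin, and a cut of a Stalk leaves it unchanged. In each case the resulting position has fewer edges and margin $\ge 2$, so by the induction hypothesis, or by the first assertion when no red Flower remains, Left wins moving first there; hence the position reached after Left's cut is a second-player win for Left, making her cut a winning move.

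The main obstacle is getting the first assertion's strategy exactly right: one must verify that the invariant $\nu = 0$ is genuinely restorable after every Right move, that Left needs her blossom reserve only on the opening move (so that a single blue Flower suffices), and that termination forces the empty position to arise after a move of Left rather than of Right. Once this tempo argument is in place the second assertion is essentially bookkeeping, the only subtlety being the first-player/second-player shift implicit in the phrase ``cutting in the stem of a red Flower is a winning move''.
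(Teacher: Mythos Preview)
Your argument is correct. Note first that the paper does not actually prove Lemma~\ref{lem:N2ahead}: it is quoted from \cite{WW} without proof, and only the mis\`ere analogue (Lemma~\ref{lem:2ahead}) is proved in the paper. So there is no ``paper's own proof'' to compare against for this exact statement.

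That said, your approach to the first assertion is genuinely different from the paper's argument for the mis\`ere version. The paper's proof of Lemma~\ref{lem:2ahead} is abstract: it writes $F = *_{h_1}{:}B_1 + \cdots + *_{h_n}{:}B_n + F'$ with $B_i >^+ 0$, observes that the underlying impartial game $*_{h_1}+\cdots+*_{h_n}+F'$ lies in $\mathcal{N}\cup\mathcal{P}$, and then applies the ordinal-sum inequality (Theorem~\ref{thm:ord}/Lemma~\ref{lem:ordgreater}) inductively to conclude Left has a winning move. Your proof is instead a concrete strategy-stealing argument on the green skeleton: Left plays Nim on the stems and Stalks, using a single blossom decrement as a tempo move when the initial nim-sum is already $0$. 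This is more elementary---it needs no external lemma about ordinal sums---and it makes the role of the hypothesis ``no red Flower'' completely transparent (Right has no reserve move of his own). The paper's approach, on the other hand, generalizes more smoothly to Generalized Flowers and to mis\`ere play, where the blossom is an arbitrary Red-Blue position rather than a set of loops.

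For the second assertion your induction is essentially the same idea as the paper's (``Left continually cuts down red Flowers''), just spelled out more carefully. Two small remarks: your case ``a move in a red blossom can only raise the margin'' should read ``cannot lower the margin'' (removing one of several red loops leaves the Flower red), though this does not affect the conclusion since you start from margin $\ge 3$. Also, you prove only that Left wins \emph{moving first}; the statement ``Left wins'' is typically read as $o^+(F)=\mathcal{L}$, which also requires the easy observation that any Right move from a margin-$\ge 2$ position leaves margin $\ge 1$, whence Left (now moving first) wins by either your induction hypothesis or, when the margin is exactly $1$, by cutting one more red Flower and reapplying the hypothesis.
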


\begin{cor}\label{cor:Nonemore}
If there is exactly one more blue Flower than the number of red Flowers and Right has a winning move, then this winning move is to cut in the stem of a blue Flower.
\end{cor}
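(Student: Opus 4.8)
The plan is to prove the contrapositive: I would show that every Right move in $F$ that is not the cutting of a blue Flower's stem leaves a position in which Left, now to move, wins, so that such a move cannot have been winning for Right. Writing $b$ and $r$ for the numbers of blue and red Flowers, so that $b = r+1$, the engine of the argument is Lemma \ref{lem:N2ahead}: whenever a game of Flowers has $b-r \ge 2$ and Left is to move Left wins, and whenever it has $b \ge 1$, $r=0$ and Left is to move Left wins.

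First I would enumerate Right's moves. Right may (i) cut the stem of a blue Flower, (ii) cut the stem of a red Flower, (iii) cut in the blossom of a red Flower, or (iv) cut in a Stalk; Right cannot cut in the blossom of a blue Flower, since that blossom is entirely blue. Move (i) is the asserted winning move and needs no further treatment. For (ii), cutting a red stem turns a red Flower into a Stalk, raising the surplus $b-r$ from $1$ to $2$, so by Lemma \ref{lem:N2ahead} Left (now to move) wins. For (iii), the blossom is a Red-Blue Hackenbush position, hence a number, and Right's options in a number are strictly larger; thus a red blossom cut raises the Flower's value. If the value reaches $0$ or becomes positive the Flower becomes a Stalk or a blue Flower and $b-r \ge 2$, so Lemma \ref{lem:N2ahead} again gives Left the win. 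The cases that survive are exactly those preserving the surplus at $1$: a red blossom cut that leaves the Flower red, and a Stalk cut.

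To dispose of these surplus-preserving moves I would prove the auxiliary statement that if a game of Flowers has $b-r \ge 2$ and Right is to move, then Left wins, thereby upgrading the Two-Ahead Ploy from ``Left to move'' to ``Right to move''. I expect this to go by induction on the number of edges: after Right's reply the surplus drops by at most one (only a blue stem cut lowers it, and by exactly one), so either the surplus is still $\ge 2$ with Left to move, which is settled by Lemma \ref{lem:N2ahead}, or it is exactly $1$ with Left to move, in which case Left cuts a red stem (if any red Flower remains) to restore surplus $2$ with Right to move and strictly fewer edges, invoking the inductive hypothesis, while if no red Flower remains the first part of Lemma \ref{lem:N2ahead} applies. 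Granting this auxiliary statement, a surplus-preserving Right move leaves $b-r=1$ with Left to move; Left then either wins immediately by the first part of Lemma \ref{lem:N2ahead} (when $r=0$) or cuts a red stem to reach surplus $2$ with Right to move and applies the auxiliary statement. Hence the surviving moves are all losing for Right, and the only candidate winning move is the blue stem cut.

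The main obstacle is precisely this ``Right-to-move'' strengthening of the Two-Ahead Ploy: the bare Lemma \ref{lem:N2ahead} speaks only to positions where Left is on move, so the Stalk cuts and the color-preserving red blossom cuts, which slip past the surplus threshold, are not handled by it directly. The crux is organizing the induction on edges so that Left's reply (cutting a red stem to regain a two-Flower lead, or appealing to the no-red base case) always returns the game to a configuration already covered, together with the routine check that Right genuinely cannot lower the surplus by more than one per move.
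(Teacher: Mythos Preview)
Your argument is correct. The paper does not actually supply a proof of Corollary~\ref{cor:Nonemore} (it is quoted from the literature), but it does prove the word-for-word mis\`ere analogue, Corollary~\ref{cor:onemore}, and that proof is two lines: if Right does not cut in the stem of a blue Flower, Left responds by cutting in the stem of a red Flower, reaching a position with at least two more blue Flowers than red ones, and then invokes the Two-Ahead Ploy.

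The difference between your route and the paper's is entirely in how Lemma~\ref{lem:N2ahead} is read. You treat ``Left wins by cutting in the stem of any red Flower'' as a statement only about Left on move, and therefore feel obliged to establish separately the auxiliary claim that a two-Flower surplus with \emph{Right} on move is still a Left win; you do this by a clean induction on edges. The paper (and the standard usage) reads ``Left wins'' as the outcome class $\mathcal{L}$: Left's strategy of repeatedly felling red stems works whichever player starts, since the surplus can drop by at most one on any Right move and eventually one reaches $r=0$, $b\ge 1$ with Left to play. Under that reading your auxiliary statement \emph{is} the second clause of Lemma~\ref{lem:N2ahead}, and the surplus-preserving cases (iii) and (iv) are disposed of in one stroke: Left cuts a red stem (or, if $r=0$, invokes the first clause directly) and the lemma finishes the job. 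So your proof is sound but does roughly twice the work; what you gain is that your argument is self-contained even under the most parsimonious reading of the lemma.

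One small cosmetic point: in a (non-Generalized) Flower the red blossom is a collection of red loops, so Right's blossom cut changes the value from $-k$ to $-(k-1)$ and can produce a Stalk (when $k=1$) but never a blue Flower. Your ``or a blue Flower'' sub-case in (iii) is therefore vacuous here, though it would be live for Generalized Flowers and does no harm.
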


\begin{lem}[{Berlekamp, \cite{notebook}}]\label{lem:Nn1,b<c}Let $F= *_{b_1}:x_1 + *_{c_1}:-y_1 + A$ where $x_1,y_1>0$ and $A$ is a game of Stalks with nim-sum $a$. Suppose $b_1 < c_1$. Then 
\begin{eqnarray*}
o^+(F) = \left\{ 
\begin{array}{cc}
\mathcal{N} & \text{ if } a ~\text{\sout{$\uparrow$}}~  (b_1 - 1) \geq c_1\\
\mathcal{L} & \text{ if } a ~\text{\sout{$\uparrow$}}~  (b_1 - 1) < c_1.
\end{array}
\right.
\end{eqnarray*}
\end{lem}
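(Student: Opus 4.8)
The plan is to reduce the two–Flower problem to two single–Flower sub-problems and then to read the threshold off the Two-Ahead Ploy. Throughout write $F = P + Q + A$ with $P = *_{b_1}:x_1$ the blue Flower, $Q = *_{c_1}:(-y_1)$ the red Flower, and $A$ the Stalks of total nim-value $a$; recall that the only available moves are to cut a stem (which severs the whole blossom above it and leaves a Stalk $*_{m'}$ with $m'$ strictly below the old stem height) or to shave one unit off a blossom of one's own colour, and that an all-green position is a $\mathcal{P}$-position exactly when its nim-sum is $0$.

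First I would isolate the two auxiliary facts that drive everything. Let $\Phi$ denote a position consisting of a single blue Flower of height $b$ together with Stalks of nim-value $N$ and no red Flower. I claim $o^+(\Phi) = \mathcal{N}$ if $N < b$ and $o^+(\Phi) = \mathcal{L}$ if $N \ge b$, with the colour-reversed statement ($o^+ = \mathcal{N}$ for $N < c$ and $o^+ = \mathcal{R}$ for $N \ge c$) holding for a single red Flower of height $c$. These follow from Lemma \ref{lem:N2ahead} by a short induction on the number of edges: the Two-Ahead Ploy already gives that the mover wins, so $o^+(\Phi) \in \{\mathcal{N}, \mathcal{L}\}$ and only Right's prospects moving first are in question. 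If $N < b$, Right cuts the blue stem down to $*_N$, leaving an all-green heap of nim-sum $N \oplus N = 0$ and a loss for Left; if $N \ge b$, every Right move either cuts the blue stem to some $*_{m'}$ with $m' \le b-1 < N$ (an all-green heap of nonzero nim-sum) or reduces a Stalk (a smaller $\Phi$-position), and in both cases the responder Left wins, so $o^+(\Phi) = \mathcal{L}$.

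With these in hand the argument is a case analysis on the first move. I would first show Left always wins moving first, so that $o^+(F) \in \{\mathcal{N}, \mathcal{L}\}$: Left cuts the red stem to $*_{c'}$ for a $c' \le c_1 - 1$ maximising $a \oplus c'$, producing a $\Phi$-position of Stalk-value $a \oplus c'$ with a blue Flower of height $b_1$. Since the upper nim-sum is monotone in its first argument, $a ~\text{\sout{$\uparrow$}}~ (c_1-1) \ge 0 ~\text{\sout{$\uparrow$}}~ (c_1-1) = c_1 - 1 \ge b_1$, so this Stalk-value is at least $b_1$ and the resulting position is $\mathcal{L}$ by the claim above, i.e.\ a loss for Right to move. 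It then remains to decide whether Right, moving first, can win, which is exactly the $\mathcal{N}$ versus $\mathcal{L}$ dichotomy. The decisive move is Right cutting the blue stem to $*_{m'}$ with $m' \le b_1 - 1$, which yields a red-Flower-plus-Stalks position of nim-value $a \oplus m'$; by the colour-reversed claim this is a win for Right (outcome $\mathcal{R}$) exactly when $a \oplus m' \ge c_1$, and the best such $m'$ gives $\max_{0 \le m' \le b_1-1} a \oplus m' = a ~\text{\sout{$\uparrow$}}~ (b_1-1)$. Hence if $a ~\text{\sout{$\uparrow$}}~ (b_1-1) \ge c_1$ Right has this winning move and $o^+(F) = \mathcal{N}$.

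For the converse I would check, under $a ~\text{\sout{$\uparrow$}}~ (b_1-1) < c_1$, that every Right move loses, giving $o^+(F) = \mathcal{L}$. A blue-stem cut now gives $a \oplus m' < c_1$ for all admissible $m'$, hence an $\mathcal{N}$ red-Flower position that the responder Left wins; a red-stem cut gives a $\Phi$-position (outcome $\mathcal{N}$ or $\mathcal{L}$), again a Left win; shaving the red blossom either leaves a strictly smaller position of the same shape with the same threshold (so $\mathcal{L}$ by induction, the threshold being independent of $y_1$) or, when $y_1 = 1$, turns $Q$ into a Stalk and leaves a $\Phi$-position; and reducing a Stalk leaves a smaller same-shape position, which Left wins moving next by the preceding paragraph. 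I expect the main obstacle to be bookkeeping the induction cleanly so that the two single-colour claims, the ``Left always wins first'' step, and the threshold step are invoked in a non-circular order; the arithmetic inputs ($a ~\text{\sout{$\uparrow$}}~ (c_1-1) \ge b_1$ and the identification of $\max_{0\le m' \le b_1-1} a \oplus m'$ with $a ~\text{\sout{$\uparrow$}}~ (b_1-1)$) are immediate from Lemma \ref{lem:updown} and the stated monotonicity, and the genuinely conceptual point is that Right's only hope is to destroy Left's blue Flower and gamble that the residual green heap still dominates the surviving red Flower of height $c_1$.
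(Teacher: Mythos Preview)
The paper does not actually prove Lemma~\ref{lem:Nn1,b<c}; it is stated as a known result of Berlekamp and cited without argument. The closest thing to a paper proof is the proof of the mis\`ere analogue, Proposition~\ref{prop:n1,b<c}, whose normal-play content is essentially the argument you give: Left, moving first, cuts the red stem to a height $c_1'$ achieving $a\oplus c_1' = a~\text{\sout{$\uparrow$}}~(c_1-1)\ge c_1-1\ge b_1$, after which Right is forced (via Corollary~\ref{cor:onemore}) to cut the blue stem and leave a nonzero nim-sum; conversely Right, moving first, must cut the blue stem, and his best choice $b_1'$ yields nim-value $a~\text{\sout{$\uparrow$}}~(b_1-1)$, which wins for him exactly when this is at least $c_1$.

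Your proof is correct and follows this same line. The one presentational difference is that you isolate the single-Flower claim (a blue Flower of height $b$ plus Stalks of nim-value $N$ has outcome $\mathcal{L}$ iff $N\ge b$) as an explicit auxiliary lemma, whereas the paper leaves this implicit in its use of Corollary~\ref{cor:onemore} together with the nim endgame. This makes your case analysis for Right's non-stem moves (shaving the red blossom, cutting a Stalk, cutting the red stem) slightly cleaner, since each immediately lands in a position already known to lie in $\mathcal{N}\cup\mathcal{L}$. One minor remark: the ``short induction on the number of edges'' you invoke for the auxiliary claim is not actually needed in the case $N\ge b$, since after any Right move the resulting position is either all-green with nonzero nim-sum or a single-blue-Flower position, and in the latter case the Two-Ahead Ploy alone already gives $o^+\in\{\mathcal{N},\mathcal{L}\}$, which is all you use.
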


\begin{lem}\label{lem:Nn1,b=c} Let $F= *_{h}:x_1 + *_{h}:-y_1 + A$ where $x_1,y_1>0$ and $A$ is a game of Stalks with nim-sum $a$. Let $\alpha$ be the largest integer such that $2^{\alpha} | h$. Then
\begin{eqnarray*}
o^+(F) = \left\{ 
\begin{array}{cc}
\mathcal{N} & \text{ if } a\geq 2^{\alpha} \text{ or } 0 \neq a < 2^{\alpha}, x_1=y_1 \\
\mathcal{P} & \text{ if } a = 0, x_1 = y_1 \\
\mathcal{L} & \text{ if } a < 2^{\alpha}, x_1 > y_1 \\
\mathcal{R} & \text{ if } a < 2^{\alpha}, x_1 < y_1.
\end{array}
\right.
\end{eqnarray*}
\end{lem}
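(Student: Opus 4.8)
The plan is to reduce everything to two facts about a single flower together with a nim-heap and then analyze the four types of first move. Throughout I work in normal play, where equal games have equal outcomes, so I may replace the game of Stalks $A$ by its nim-heap value $*_a$; thus it suffices to determine $o^+\big((*_h:x_1)+(*_h:-y_1)+*_a\big)$. Two preliminary reductions dispose of the easy rows. First, negating every edge sends this position to $(*_h:y_1)+(*_h:-x_1)+*_a$ (using $*_h:x=\overline{*_h:(-x)}$ and $\overline{*_a}=*_a$), which interchanges $x_1\leftrightarrow y_1$ and the classes $\mathcal{L}\leftrightarrow\mathcal{R}$ while fixing $\mathcal{N},\mathcal{P}$; so I may assume $x_1\ge y_1$ and recover the $x_1<y_1$ row at the end. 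Second, when $x_1=y_1$ the two flowers cancel, $(*_h:x_1)+(*_h:-y_1)=0$, so the position equals $*_a$, with outcome $\mathcal{P}$ if $a=0$ and $\mathcal{N}$ otherwise; this gives both $x_1=y_1$ rows. It remains to treat $x_1>y_1$.

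For $x_1>y_1$ I would isolate two claims. Claim (S1): $B:=(*_h:x_1)+(*_h:-y_1)$ is an $\mathcal{L}$-position. Moving first, Left reduces the blue blossom $x_1\to y_1$, reaching $0$; moving second, Left answers each Right move so as to reach $0$ --- matching stems after a stem-cut (cut the other stem to the same $*_k$), matching blossoms after Right shrinks the red blossom to $-j$ with $j\ge 1$ (reduce the blue blossom to $j$), and, when Right cuts the red blossom all the way down to $*_h$, invoking Claim (S2). Claim (S2): for $x\ge 1$ the single-flower sum $(*_h:x)+*_m$ is $\mathcal{N}$ if $m\le h-1$ and $\mathcal{L}$ if $m\ge h$. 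The first part is immediate, since either player cuts the stem to $*_m$ and leaves $*_m+*_m=0$; the second part I would prove by induction on $m$, using that Right can only cut the green stem (to a nonzero $*_{k\oplus m}$, which Left zeroes) or shrink the heap (landing in a smaller instance of (S2)), while Left moving first shrinks the heap to $*_{m-1}$. By negation, $(*_h:-y)+*_m$ is $\mathcal{N}$ if $m\le h-1$ and $\mathcal{R}$ if $m\ge h$.

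With these in hand the main dichotomy is short. Since $B$ is positive, Left moving first in $B+*_a$ always wins (kill the heap, leaving $B$ with Right to move); hence the outcome is $\mathcal{L}$ or $\mathcal{N}$, and everything reduces to whether Right, moving first, has a winning reply. I would check Right's four options: cutting the heap, or shrinking the red blossom to $-j$ with $j\ge 1$, leaves another $x_1>y_1$ instance with Left to move, who wins by killing the heap; cutting the red stem, or shrinking the red blossom down to $*_h$, leaves a blue-flower-plus-heap, which by (S2) is $\mathcal{N}$ or $\mathcal{L}$ and so is won by Left to move. The only dangerous move is cutting the \emph{blue} stem to some $*_k$ ($k\le h-1$), which leaves the red-flower-plus-heap $(*_h:-y_1)+*_{k\oplus a}$ with Left to move; by the negation of (S2) this is a loss for Left exactly when $k\oplus a\ge h$. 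Thus Right wins iff he can choose $k\le h-1$ with $k\oplus a\ge h$, that is, iff $a~\text{\sout{$\uparrow$}}~(h-1)\ge h$.

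The final step is the arithmetic identity $a~\text{\sout{$\uparrow$}}~(h-1)\ge h \iff a\ge 2^{\alpha}$, where $2^{\alpha}$ is the lowest set bit of $h$. Since $h-1$ agrees with $h$ above bit $\alpha$, is $0$ at bit $\alpha$, and is all $1$'s below, Lemma \ref{lem:updown} gives $a~\text{\sout{$\uparrow$}}~(h-1)=h-1$ whenever $a<2^{\alpha}$, while for $a=2^{\alpha}$ it gives $h+2^{\alpha}-1\ge h$; monotonicity of $\text{\sout{$\uparrow$}}$ in its first argument then pushes this up to all $a\ge 2^{\alpha}$. Combining, for $x_1>y_1$ the position is $\mathcal{L}$ when $a<2^{\alpha}$ and $\mathcal{N}$ when $a\ge 2^{\alpha}$, and the negation reduction supplies the $\mathcal{R}$ row, completing the table. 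I expect the main obstacle to be Claim (S2) --- in particular the induction establishing its $\mathcal{L}$ half and confirming genuine independence from the blossom size --- together with keeping the bookkeeping of Right's four move types honest; the arithmetic identity, though fiddly, is routine given Lemma \ref{lem:updown}.
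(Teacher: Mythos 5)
Your overall route is sound and, notably, it is not the paper's route: the paper states this lemma without proof, citing Berlekamp's notebook and \emph{Winning Ways}, and the nearest argument it actually writes out is the proof of Proposition \ref{prop:n1,b=c}, which establishes the same table for both $o^+$ and $o^-$ by a direct four-way case split on $a$ versus $2^{\alpha}$ (first player jumps the stalk nim-sum above $c_1$ when $a\geq 2^{\alpha}$; when $a<2^{\alpha}$ the players avoid cutting stems and fight a ``blossom war''). Your decomposition --- cancel the $x_1=y_1$ case, negate to handle $x_1<y_1$, prove the single-flower-plus-heap lemma (S2), observe that Right's only dangerous first move is cutting the blue stem, and then convert ``$\exists\, k\leq h-1$ with $k\oplus a\geq h$'' into $a ~\text{\sout{$\uparrow$}}~ (h-1)\geq h$ and hence $a\geq 2^{\alpha}$ via Lemma \ref{lem:updown} --- is a clean, purely normal-play derivation that also makes the kinship with Lemma \ref{lem:Nn1,b<c} transparent. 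The move-by-move bookkeeping of Right's four options and the final bit arithmetic both check out.

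Two of your specified moves, however, are wrong as written and need repair. First, in (S1) Left cannot ``reduce the blue blossom $x_1\to y_1$'' (or ``to $j$'') in one move: a Flower's blossom loses one loop per move, and a Generalized Flower's blossom cannot be steered to a prescribed value. The claim survives, but the proof must be an induction maintaining the invariant $x>y\geq 1$ (Left answers a red-loop removal with a blue-loop removal, landing in a smaller (S1) instance or in $(*_h:x)+*_h$, which is the $m=h$ case of (S2)), rather than ``reach $0$ after every exchange.'' Second, and more seriously, in (S2) with $m=h$ Left's first move must \emph{not} be to shrink the heap to $*_{m-1}=*_{h-1}$: that leaves $(*_h:x)+*_{h-1}$, which is $\mathcal{N}$ by your own first half, so Right wins by equalizing the stems. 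Left's correct move there is to remove a blue loop, reaching $(*_h:(x-1))+*_h$ (or $*_h+*_h=0$), so the induction must run on $x+m$ rather than on $m$ alone. This boundary case is genuinely load-bearing: it is exactly what (S1) invokes when Right strips the red blossom bare, and its negation is what makes Right's blue-stem cut to $*_k$ with $k\oplus a\geq h$ a \emph{winning} move rather than merely a non-losing one. With those two local fixes the argument is complete.
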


From these results it follows that for a game $F$ of Hackenbush Flowers under normal play, when the first player has at least one more Flower, we have $F \in \mathcal{N}^+$; when the second player has at least two more Flowers, we have $F \in \mathcal{P}^+$; and otherwise the players alternate cutting down Flowers. Moreover, from Lemmas \ref{lem:Nn1,b<c} and \ref{lem:Nn1,b=c}, once each player has chosen a Flower to cut down, the first player cuts it to maximize the nim-sum of the resulting Stalks game and the second player to minimize it. 

As these results for normal play Flowers depend only on the relative magnitudes of the $b_i$, $c_j$, $x_i$ and $y_j$, and a game of Flowers has only a finite number of Flowers, these results also extend easily to all Generalized Flowers.


\section{Mis\`ere Play Generalized Flowers}\label{GenFlowers}
Let $F = \sum_{i=1}^n F_i \in \mathcal{F}$ a disjunctive sum of Generalized Flowers. We note that as all Shrubs are equivalent to Stalks, and all Stalks are Generalized Flowers, classifying the outcome classes of $\mathcal{F}$ also classifies the outcomes for all disjunctive sums of Generalized Flowers and Shrubs. From here onwards we will only discuss Generalized Flowers and Stalks. Our main theorem is as follows.

\begin{thm}\label{thm:main}
Let $F \in \mathcal{F}$ be a disjunctive sum of Generalized Flowers. Then, 
\vspace{-5pt}
\begin{eqnarray*}
o^+(F)=o^-(F^*) \text{ and } o^+(F^*)=o^-(F).
\end{eqnarray*}
\end{thm}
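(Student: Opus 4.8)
The plan is to deduce both identities from the single statement $o^+(F) = o^-(F^*)$ and then prove that statement by induction on the number of edges of $F$. First I would note that it suffices to prove $o^+(F) = o^-(F^*)$ for every $F \in \mathcal{F}$: applying it to $F^*$ gives $o^+(F^*) = o^-((F^*)^*)$, and since $(F^*)^* \in \{F,\, F+*+*\}$ while $*+* \equiv^- 0 \pmod{\mathcal{D}}$ by Allen's result \cite{Allen} (and $F \in \mathcal{F} \subset \mathcal{H}_* \subset \mathcal{D}$), one gets $o^-((F^*)^*) = o^-(F)$ and hence $o^+(F^*) = o^-(F)$. This is the same cancellation used in the remark following Lemma \ref{lem:nim}.

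For the induction I would split on whether $F \in \mathcal{G}^*$. If $F \in \mathcal{G}^*$, then every $G_i'$ has height $1$, so each Flower component is a Generalized Sprig and each Stalk component is a single edge $*$; thus $F$ is a sum of Generalized Sprigs and copies of $*$, and $F^* = F+*$. This is exactly the setting of Theorem \ref{thm:Generalized Sprigs}, which I would invoke to conclude $o^+(F) = o^-(F+*) = o^-(F^*)$. This disposes of all the all-height-one positions and reduces the problem to the case $F \notin \mathcal{G}^*$, where $F^* = F$ and I must show $o^+(F) = o^-(F)$.

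For $F \notin \mathcal{G}^*$ I would argue as in Theorem \ref{shrubs2}. Since an outcome class is fixed by the two bits ``does Left win moving first?'' and ``does Right win moving first?'', and since colour-swapping maps $\mathcal{F}$ to itself while interchanging Left and Right, it suffices to show that a normal-play winning strategy transfers to a misère-play winning strategy; the converse is then automatic because exactly one player wins each scenario. The winning player follows the normal-play strategy, using that $F$ has a component of height $\geq 2$. As long as the game stays outside $\mathcal{G}^*$, the inductive hypothesis gives $o^+(F') = o^-((F')^*) = o^-(F')$ for every reached option $F'$, so the normal and misère games run in parallel and the mimicked strategy remains winning. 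The one place they can diverge is the first move into $\mathcal{G}^*$: for such an option $F'$ the inductive hypothesis supplies only $o^+(F') = o^-(F'+*)$, which concerns the misère outcome of the twin rather than of $F'$ itself.

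The main obstacle is to control this transition, and in particular to handle the case where the \emph{opponent} forces the game into $\mathcal{G}^*$. The key fact I would establish is that a green stem of length $\geq 2$ supplies a spare parity of moves -- it can be cut to leave either an even or an odd reserve, just as a nim-heap of size $\geq 2$ does in the misère analysis of Lemma \ref{lem:nim} -- so the winning player can always neutralise an unfavourable transition and can himself trigger a favourable one. Combining this tempo reserve with the normal-play classification of Lemmas \ref{lem:N2ahead}, \ref{lem:Nn1,b<c} and \ref{lem:Nn1,b=c}, I would show that the winning player can guarantee that the position first entering $\mathcal{G}^*$ is one for which the inductive hypothesis (through Theorem \ref{thm:Generalized Sprigs}) certifies the misère-favourable outcome. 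The partizan bookkeeping here is the delicate part: Left and Right may cut different petals (blue versus red), and cutting any stem edge destroys the blossom it supports, so the parity argument must be tracked separately for the two players rather than folded into one impartial nim-value as in the Shrub case.
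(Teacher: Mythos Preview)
Your reduction of the second identity to the first via $(F^*)^* \in \{F,\,F+*+*\}$ and Allen's $*+*\equiv^-0$ is correct and is exactly what the paper does after Lemma~\ref{lem:nim}. Your handling of the $F \in \mathcal{G}^*$ case by appeal to Theorem~\ref{thm:Generalized Sprigs} is also fine.

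The genuine gap is the $F \notin \mathcal{G}^*$ case. Your plan is an induction on edges in which the normal-play winner mimics her normal strategy and, at the moment the game threatens to enter $\mathcal{G}^*$, uses a ``spare parity'' from a stem of height $\geq 2$ to adjust. You correctly identify the obstacle (the opponent may force the transition into $\mathcal{G}^*$) but do not actually resolve it: if the \emph{only} component of height $\geq 2$ is the one your opponent just cut down to height $0$ or $1$, your reserve is gone before you can spend it, and the inductive hypothesis now only tells you about $o^-(F'+*)$, not $o^-(F')$. Nothing in your sketch prevents this. The partizan bookkeeping you flag as ``delicate'' is not a detail to be filled in later; it is the entire content of the theorem.

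The paper's proof is structured quite differently and shows why a direct edge-induction does not go through. It first reduces (via the two-ahead ploy, Lemma~\ref{lem:2ahead} and Corollary~\ref{cor:onemore}) to \emph{Flowerbeds} with equal numbers of red and blue flowers; you never invoke this reduction, yet without it there is no reason the players alternate cutting stems at all. It then does an explicit case analysis for Flowerbeds of size $1$ (Propositions~\ref{prop:n1,b<c} and~\ref{prop:n1,b=c}), which is where the ``cut to $0$, $1$, or $h$'' parity trick actually lives. For size $n\geq 2$ it proves the crucial Lemma~\ref{lem:1ww}: the winning player has a strategy guaranteeing that at every turn either a flower of height $\geq 2$ remains or at most one flower of each colour remains. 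This is the precise statement your ``tempo reserve'' is groping towards, and its proof requires the nim-sum machinery of Lemmas~\ref{lem:mixUD} and~\ref{lem:down}, not just a parity observation. Finally, the paper treats Flowerbeds containing canceling pairs $*_h:(x)+*_h:(-x)$ in a separate section (Theorem~\ref{thm:special} and Lemma~\ref{lem:n1,b=c,x=y}), because in that case the induction on $n$ breaks down and a different mirroring strategy is needed; your proposal does not distinguish this case at all.

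In short: your overall architecture (reduce to $o^+ = o^-(\cdot^*)$; dispatch $\mathcal{G}^*$ via Theorem~\ref{thm:Generalized Sprigs}; show $o^+=o^-$ otherwise) matches the paper, but the proof that $o^+(F)=o^-(F)$ when $F\notin\mathcal{G}^*$ cannot be carried out by a single edge-induction with a parity reserve. You need the Flowerbed reduction, the explicit $n=1$ analysis, Lemma~\ref{lem:1ww} to control which player triggers the transition, and a separate argument for canceling pairs.
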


Our proof of this theorem consists of a number of steps. In Section \ref{unequal}, we show that if Theorem \ref{thm:main} holds for Generalized Flower games with equal numbers of red and blue flowers, then it holds for all Generalized Flower games. In Section \ref{Sprigs}, we extend the results of McKay, Milley and Nowakowski to show Theorem \ref{thm:main} holds for all disjunctive sums of Generalized Sprigs and Stalks. In Section \ref{nocancel}, we show that Theorem \ref{thm:main} holds for Generalized Flower games with equal numbers of red and blue flowers where there is no pair of flowers $F_1, F_2$ such that $F_1=*_{h}:(x)$ and $F_2=*_{h}:(-x)$. In Section \ref{cancel}, we show that if Theorem \ref{thm:main} holds for all Generalized Flower games with no pair  $F_1=*_{h}:(x), F_2=*_{h}:(-x)$, then it holds for all Generalized Flower games.

The following result allows us to simplify games in $\mathcal{F}$.

\begin{thm}\label{thm:*equal} Let $X$ and $Y$ be games in Red-blue Hackenbush with normal-play values $x$ and $y$ respectively such that $x+y=0$. Then $*:X+*:Y \equiv^- 0$ (mod $\mathcal{D}$).
\end{thm}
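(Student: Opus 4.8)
The plan is to show directly that $G := *:X + *:Y$ is indistinguishable from $0$ against every dicot, i.e.\ that $o^-(G+Z) = o^-(Z)$ for all $Z \in \mathcal{D}$. First I would record two structural facts. Since each of $*:X$ and $*:Y$ is a green edge on the ground carrying a red--blue position, both are star-based, so $G \in \mathcal{H}_* \subset \mathcal{D}$; in particular $G$ is itself a dicot. Second, because $x+y=0$ the red--blue game $X+Y$ has normal-play value $0$, hence $X+Y =^+ 0$ and $Y =^+ \bar{X}$, so the two ``tops'' cancel in normal play. I would also use a conjugation symmetry to halve the work: a short induction gives $-(A:B) = (-A):(-B)$, and with $-*=*$ this yields $\bar{G} = *:\bar{X} + *:\bar{Y}$, whose tops have values $-x$ and $x$. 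Thus the family of games of this form is closed under conjugation, so it suffices to prove $G \geq^- 0$ (mod $\mathcal{D}$) for every such $G$; applying this to $\bar{G}$ and conjugating (which reverses the outcome partial order and lets $\bar{Z}$ range over all of $\mathcal{D}$) gives $G \leq^- 0$, and hence $G \equiv^- 0$ (mod $\mathcal{D}$).

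To prove $G \geq^- 0$ (mod $\mathcal{D}$) I would exhibit, for each dicot $Z$, a strategy for Left witnessing $o^-(G+Z) \geq o^-(Z)$ in both roles (moving first when Left wins $Z$ moving first, moving second when Left wins $Z$ moving second). This generalizes Allen's result that $*+* \equiv^- 0$ (mod $\mathcal{D}$), which is exactly the case $X=Y=0$, where $*:0 = *$. Left would play her winning $Z$-strategy in the $Z$-component and treat $G$ as a neutral pairing region governed by the invariant that the two tops always have opposite values, i.e.\ form a normal-play $\mathcal{P}$-position. Concretely: if Right moves inside a top, Left answers inside the tops by the normal-play value-$0$ response, restoring balance and keeping both green bases present; if Right cuts one green base, Left cuts the other, so both sprigs vanish together and the residual position is $Z$ (unchanged) with Right again to move. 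In this way Right's activity in $G$ never alters $Z$ and only inserts paired exchanges, while the two green edges guarantee Left always has a legal reply --- the reason \emph{two} sprigs are needed rather than one.

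The main obstacle is the mis\`ere endgame. Unlike in normal play, the rule ``always respond and maintain balance'' can backfire, since the player forced to make the final move loses. The delicate point is the interaction when the $Z$-component becomes exhausted while $G$ is still present: the residual reduces to an $*+*$-type race on the two bases, and naive pairing can leave Left making the last move. Resolving this requires using the two green edges as a parity reservoir --- choosing when to cut a base so as to force the opponent to move last --- and checking that this adjustment is compatible with Left's $Z$-strategy for \emph{every} dicot $Z$. I expect the cleanest route is to fold the endgame into Allen's base case: once Left has driven the tops to value $0$ and both sprigs reduce to bare stars, the position is $*+*$ plus a dicot $Z'$, whose mis\`ere outcome equals $o^-(Z')$ by $*+* \equiv^- 0$ (mod $\mathcal{D}$). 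Verifying that Left's combined strategy always reaches such a configuration with Left's winning role intact, uniformly over all $Z \in \mathcal{D}$, is the step I expect to require the most care.
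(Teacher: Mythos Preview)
Your overall plan---reduce to $G \geq^- 0$ by conjugation, then have Left play her winning $Z$-strategy in $Z$ while pairing Right's moves in the sprigs---is exactly the paper's approach. The conjugation reduction is fine and is just a cleaner way of saying what the paper phrases as ``assume without loss of generality that Left wins $G$''. You have also correctly located the only delicate point: what happens when the dicot component $Z$ runs out of moves.

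Where your proposal goes wrong is in how you try to resolve that endgame. Your plan to ``drive the tops to value $0$'' and reduce to $*+*+Z'$ does not work: Left is only \emph{responding} in the tops, so she cannot force $X'$ and $Y'$ individually to $0$ (she only maintains $X'+Y' \geq^+ 0$), and she certainly cannot spend free moves in the tops without disturbing her role in $Z$. There is also no parity problem to fix with a ``reservoir'': precisely because every excursion Right makes into the sprigs is answered immediately by Left, the moves in $Z$ occur with the correct alternation, and when $Z$ is exhausted it \emph{is} Left's turn, with either nothing left or both sprigs still standing as $*:X'+*:Y'$ with $X'+Y'\geq^+0$.

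The paper's endgame is much simpler than what you are attempting. Since $X'+Y'\geq^+0$, at least one of $X',Y'$ has nonnegative value; say $X'\geq^+0$. Left cuts the base of $*:Y'$, leaving $*:X'$ with Right to move. From here Left never touches the base: she plays the normal-play second-player strategy in the red--blue top $X'$, which she can always do because after any Right move the top has strictly positive value and hence a blue edge. Right is therefore eventually forced to cut the base, leaving $0$ with Left to move---and in mis\`ere that is a Left win. So the ``obstacle'' you identified dissolves with a single base-cut rather than any appeal to Allen's $*+*$ result.
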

\begin{proof}
Let $G$ be a game in $\mathcal{D}$ and assume without loss of generality that Left wins $G$. Then Left wins $*:X+*:Y+G$ using the following strategy. If Right cuts down one of $*:X$ or  $*:Y$, Left cuts down the other one; if Right plays in $X$ or $Y$, Left responds in either one so as to make the corresponding sum of blossom values non-negative; and otherwise Left follows her original strategy for $G$ until no move is available in $G$. We note that since $X+Y=^+0$, Left can always respond to Right's moves in $X$ or $Y$ with a move in $X$ or $Y$. 
Following this strategy, when there is no move available in $G$, it must be Left's turn. Either what remains is $0$ and Left wins, or what remains is $*:X'+*:Y'$ for some $X'+Y' \geq^+0$. In the latter case, at least one of $X' \geq^+ 0$ or $Y' \geq^+0$ must hold, so assume that $X' \geq^+0$. Then Left wins by cutting down $*:Y'$.\end{proof}

Let the \textit{blossom value} of a Generalized Flower be the normal-play value of its Blossom. Then as $*+* \equiv^- 0 \text{ (mod $\mathcal{D}$)}$, we may cancel pairs of Stalks of height $1$, and by Theorem \ref{thm:*equal}, we may cancel pairs of Generalized Sprigs with blossom values that are equal in magnitude and opposite in sign. We now introduce a result that will help us to classify Generalized Flowers in terms of their blossom values.

\begin{thm}[{McKay, Milley and Nowakowski, \cite{sprigs}}]\label{thm:ord}
Let $G$ have a left and a right option. If $H \geq^+0$ then for any game $X$, $o^-(G:H+X) \geq o^-(G+X)$ and hence $G:H \geq^- G$. Similarly, if $H \leq^-0$, then $G:H \leq^- G$.
\end{thm}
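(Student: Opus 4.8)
The plan is to establish the outcome inequality $o^-(G:H+X) \geq o^-(G+X)$ for \emph{every} game $X$; since this is exactly the definition of $G:H \geq^- G$, the clause ``hence $G:H \geq^- G$'' is then immediate. Because the partial order on $\{\mathcal{L},\mathcal{N},\mathcal{P},\mathcal{R}\}$ is determined by who wins moving first and who wins moving second, it suffices to prove two implications: (i) if Left wins $G+X$ moving second then Left wins $G:H+X$ moving second; and (ii) if Left wins $G+X$ moving first then Left wins $G:H+X$ moving first. Checking these against Figure~\ref{partialorder} forces $o^-(G:H+X)\geq o^-(G+X)$ in all four cases (if $o^-(G+X)=\mathcal{P}$ only (i) is needed, if $o^-(G+X)=\mathcal{N}$ only (ii), if $\mathcal{L}$ both, and if $\mathcal{R}$ nothing is needed since $\mathcal{R}$ is least).

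To prove (i) and (ii) I would use a shadow-strategy (mimicking) argument. Left plays in the real game $G:H+X$ while maintaining a parallel shadow copy $G+X'$ in which she follows a fixed winning strategy (second-player for (i), first-player for (ii)). Moves in the $X$-component and moves in the base $G$ are copied verbatim between the two games; note that a move in the base $G$ discards $H$, so the instant such a move occurs the real and shadow positions become \emph{identical}, and from then on Left simply continues her shadow strategy in what is literally the same game. The only moves with no shadow counterpart are Right's moves inside the ordinal summand, i.e. $G:H' \to G:H'^R$. To each of these Left replies within the summand, using the normal-play fact that from any $H'\geq^+0$ and any right option $H'^R$ Left has a reply $H'^{RL}\geq^+0$ (the recursive characterization of $\geq^+0$). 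Thus the invariant $H'\geq^+0$ is preserved, these excursions occur in Right--Left pairs, and they alter neither the shadow position nor whose turn it is; this is the one place the hypothesis $H\geq^+0$ is used.

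The step I expect to be the main obstacle is the misère subtlety that the extra moves inside $H$ might force \emph{Left} to make the final, losing, move even though she wins the shadow game. This is resolved precisely by the hypothesis that $G$ has both a left and a right option: as long as the base $G$ has not yet been played, \emph{both} players always have an available move in the base, so neither the real nor the shadow game can terminate while $H$ is still present. Since $H$ is finite, only finitely many $H$-excursions can occur, so every line of play eventually reaches a move in the base $G$; at that instant $H$ is discarded, the real game coincides with the shadow game, and the outcome is governed entirely by Left's winning strategy there. Hence Left wins the real game in the required role, establishing (i) and (ii). This base-keeps-the-game-alive observation is the crux, and it is the only place the hypothesis on $G$ is used.

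Finally, the second statement is proved by the mirror-image argument with the roles of Left and Right, and the inequality, reversed throughout: Right maintains a winning shadow strategy in $G+X$, answers each of Left's moves inside the summand from within it (the hypothesis on $H$ supplying Right's reply), and the same observation---now invoking that $G$ has a \emph{left} option---prevents a premature termination in Right's disfavour. The companion statement can also be obtained by applying the negation operator, using $\overline{G:H}=\overline{G}:\overline{H}$ together with the facts that negation reverses misère inequalities and preserves the hypothesis that $G$ has both a left and a right option.
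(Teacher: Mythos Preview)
The paper does not prove this theorem; it is quoted from McKay, Milley and Nowakowski \cite{sprigs} and used as a black box, so there is no in-paper argument to compare against.

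Your shadow-strategy argument is correct and is the natural proof. The reductions (i) and (ii) are exactly what is needed to force $o^-(G:H+X)\geq o^-(G+X)$, the bookkeeping keeping real and shadow positions in sync is right (the $H$-excursions come in Right--Left pairs and leave the shadow untouched), and you have correctly isolated the one genuinely mis\`ere-specific point: the hypothesis that $G$ has both a left and a right option guarantees that neither player can be out of moves while the ordinal summand is still attached, so the terminal position of the real game is controlled entirely by the shadow once someone plays in the base. The recursive use of $H'\geq^+0\Rightarrow$ every $H'^R$ has some $H'^{RL}\geq^+0$ is the standard normal-play characterisation and is applied correctly.

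One small remark: the paper's second clause reads ``$H\leq^-0$'', which is almost certainly a typographical slip for ``$H\leq^+0$'' (otherwise the two halves of the statement would not be symmetric and the negation argument you sketch would not go through). Your treatment implicitly assumes the corrected hypothesis, which is the right thing to do.
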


\begin{cor}
Let $B$ be a position in red-blue Hackenbush with normal play value $0$. Then $*_n:B =^- *_n$.
\end{cor}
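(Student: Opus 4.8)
The plan is to deduce the corollary directly from Theorem \ref{thm:ord}, exploiting the fact that a Red-Blue Hackenbush position of normal-play value $0$ is simultaneously $\geq^+ 0$ and $\leq^+ 0$. First I would record the two facts needed to invoke the theorem. Since the normal-play value of $B$ is $0$, we have $B =^+ 0$, and therefore both $B \geq^+ 0$ and $B \leq^+ 0$ hold. Second, for $n \geq 1$ the nim-heap $*_n = \{0,*_1,\ldots,*_{n-1} \mid 0,*_1,\ldots,*_{n-1}\}$ has both a Left option and a Right option, so $G = *_n$ satisfies the hypothesis of Theorem \ref{thm:ord}. (The case $n=0$ is degenerate, since $*_0 : B = B$ is $=^+ 0$ but need not be $=^- 0$; the statement is understood for $n \geq 1$.)

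Next I would apply Theorem \ref{thm:ord} twice with $G = *_n$ and $H = B$. Taking the hypothesis $B \geq^+ 0$ gives $*_n : B \geq^- *_n$, and taking $B \leq^+ 0$ in the symmetric half of the theorem gives $*_n : B \leq^- *_n$. Combining these two inequalities yields $*_n : B =^- *_n$, which is exactly the claim. The whole argument is therefore just an application of the two directions of the ordinal-sum monotonicity theorem to the two normal-play inequalities that a value-$0$ blossom automatically supplies.

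The one place I expect to need care is the reverse inequality. To be safe about the exact hypothesis of the second half of Theorem \ref{thm:ord}, I would, if necessary, derive it from the first half applied to $-B$: since $B \leq^+ 0$ we have $-B \geq^+ 0$, so $(-*_n):(-B) \geq^- -*_n$; using the identity $-(G:H) = (-G):(-H)$, the mis\`ere negation duality $G \geq^- H \iff -H \geq^- -G$, and the fact that $*_n$ is impartial so $-*_n = *_n$, this rearranges to $*_n : B \leq^- *_n$. This negation step is the only subtle point, because under mis\`ere play negation is not an additive inverse; however the only property I rely on is the order-reversing correspondence between $o^-(G)$ and $o^-(-G)$, which is legitimate, so the argument goes through.
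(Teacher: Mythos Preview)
Your proposal is correct and follows essentially the same approach as the paper: apply Theorem~\ref{thm:ord} with $G=*_n$ and $H=B$ once in each direction to obtain both $*_n:B \geq^- *_n$ and $*_n:B \leq^- *_n$. Your added remarks on the $n=0$ case and the negation-based derivation of the reverse inequality are careful extras not present in the paper's two-line proof, but they do not change the underlying argument.
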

\begin{proof}
Since $B=^+0$, by taking $G=*_n$ and $H=B$ in Theorem \ref{thm:ord}, we have  $*_n:B \geq^- *_n$, and similarly $*_n:B \leq^- *_n$.
\end{proof}

Hence for any Generalized Flower with Blossom $B=^+0$, we may ignore the Blossom and treat the Generalized Flower as a Stalk. 
Moreover, in classifying the outcome classes of disjunctive sums of Shrubs and Generalized Flowers, the proofs and strategies outlined in Section \ref{nocancel} and Section \ref{cancel} depend only on the Shrubs and the heights and blossom values of the Generalized Flowers, yet are sufficient to determine the outcome of any disjunctive sum of Generalized Flowers. Hence we will let $*_n:(x)$ denote any Generalized Flower with height $n$ and blossom value $x$. Recall that if a Generalized Flower has blossom value $x$, we say that it is \textit{blue} if $x>0$ and \textit{red} if $x<0$. We also say that the blue Generalized Flowers belong to Left and the red Generalized Flowers to Right.

\subsection{Generalized Flower Games with Unequal Numbers of Red and Blue Flowers}\label{unequal}

In this section, we reduce proving Theorem \ref{thm:main} for Generalized Flower games with unequal numbers of red and blue flowers to proving it for Generalized Flower games with equal numbers of red and blue flowers. We proceed by first giving the mis\`ere play equivalent of Lemma \ref{lem:N2ahead}, showing that the result holds for Generalized Flower games where a player has a two flower advantage. We then give the mis\`ere play equivalent of Corollary \ref{cor:Nonemore}, showing that games where a player has a one flower advantage can be reduced to games with equal numbers of red and blue flowers.

We first present a lemma that will aid in proving these results. 

\begin{lem}\label{lem:ordgreater}
Let $G$ have a left and right option and $H >^+0$. Then for any game $X$ satisfying $o^-(G+X) \in \mathcal{N}\cup \mathcal{P} \cup \mathcal{L}$, Left has a winning move under mis\`ere play in $G:H+X$. Further, for any game $Y$ satisfying $o^-(G+Y) \in \mathcal{P} \cup \mathcal{L}$, Right does not have a winning move under mis\`ere play in $G:H+Y$.
\end{lem}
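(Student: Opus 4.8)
The plan is to reduce both assertions to the monotonicity statement already established in Theorem \ref{thm:ord} and to isolate the single case where that monotonicity is not by itself strong enough. Since $H >^+ 0$ implies $H \geq^+ 0$, Theorem \ref{thm:ord} gives $o^-(G:H+X) \geq o^-(G+X)$ for every game $X$, and this inequality in the outcome partial order of Figure \ref{partialorder} does most of the work.

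For the second assertion, suppose $o^-(G+Y) \in \mathcal{P} \cup \mathcal{L}$. The only outcomes lying at or above $\mathcal{P}$ in the partial order are $\mathcal{P}$ and $\mathcal{L}$, so monotonicity forces $o^-(G:H+Y) \in \mathcal{P} \cup \mathcal{L}$ as well. In each of these two classes the player to move loses, so Right, moving first in $G:H+Y$, has no winning move. This direction needs nothing beyond Theorem \ref{thm:ord}.

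For the first assertion I would split on $o^-(G+X)$. If $o^-(G+X) \in \mathcal{N} \cup \mathcal{L}$, then monotonicity gives $o^-(G:H+X) \in \mathcal{N} \cup \mathcal{L}$ (the outcomes at or above $\mathcal{N}$ being exactly $\mathcal{N}$ and $\mathcal{L}$), and in both classes the player to move, here Left, wins, i.e.\ has a winning move. The genuinely delicate case, and the one where the strict hypothesis $H >^+ 0$ rather than merely $H \geq^+ 0$ is needed, is $o^-(G+X) = \mathcal{P}$: here monotonicity alone only yields $o^-(G:H+X) \in \mathcal{P} \cup \mathcal{L}$, which does not rule out $\mathcal{P}$, where Left to move would lose.

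To handle this case I would exhibit an explicit winning move for Left by playing in the blossom. Because $H >^+ 0$, Left has a normal-play winning move from $H$ to some left option $H^L$ with $H^L \geq^+ 0$. Making the corresponding move in the blossom of $G:H$ sends $G:H+X$ to $G:H^L+X$, and applying Theorem \ref{thm:ord} a second time (with base $G$, which still has both options, and blossom $H^L \geq^+ 0$) gives $o^-(G:H^L+X) \geq o^-(G+X) = \mathcal{P}$, hence $o^-(G:H^L+X) \in \mathcal{P} \cup \mathcal{L}$. In either class the player now to move is Right, who loses, so Left's move to $G:H^L+X$ is winning. I expect this $\mathcal{P}$ case to be the main obstacle: one must convert the \emph{strict} normal-play advantage of $H$ into a concrete tempo-gaining move in the blossom and verify, via a second application of Theorem \ref{thm:ord}, that passing to $G:H^L$ preserves the comparison $\geq^- G$.
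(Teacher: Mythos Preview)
Your argument is correct, and for the first assertion it matches the paper's proof essentially word for word: both invoke Theorem~\ref{thm:ord} when $o^-(G+X)\in\mathcal{N}\cup\mathcal{L}$, and both handle the residual $\mathcal{P}$ case by having Left play in the blossom to some $H^L\geq^+0$ and then reapplying Theorem~\ref{thm:ord}.

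For the second assertion your route is actually shorter than the paper's. You observe that Theorem~\ref{thm:ord} already gives $o^-(G:H+Y)\geq o^-(G+Y)\in\mathcal{P}\cup\mathcal{L}$, whence $o^-(G:H+Y)\in\mathcal{P}\cup\mathcal{L}$ and Right moving first loses. The paper instead performs a case analysis on Right's possible first moves (in $Y$, in $G$, or in $H$), feeding each case back through the first assertion; this works but is unnecessary once one has the outcome inequality in hand. The trade-off is that the paper's case analysis makes explicit \emph{how} Left responds to each Right move (useful if one later wants a constructive strategy), while your version is a cleaner proof of the bare outcome statement.
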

\begin{proof}
If $o^-(G+X) \in \mathcal{L} \cup \mathcal{N}$ then Left has a winning move under mis\`ere play in $G+X$, so by Theorem \ref{thm:ord}, Left also has a winning move in $G:H+X$. If $o^-(G+X) \in \mathcal{P}$ then Left's winning move is to play in $H$ to a follower $H^L \geq^+0$. Then by Theorem \ref{thm:ord}, Left wins playing second in $G:H^L + X$. 

Suppose $o^-(G+Y) \in \mathcal{P} \cup \mathcal{L}$. Then for any right follower $Y^R$ of $Y$, $o^-(G+Y^R) \in \mathcal{N} \cup \mathcal{L}$ so that Left has a winning move in $G:H+Y^R$ and Right cannot win in $G:H+Y$ by first playing in $Y$. For any right follower $G^R$ of $G$, $o^-(G^R+Y) \in \mathcal{N} \cup \mathcal{L}$ so that Right cannot win in $G:H+Y$ by first playing in $G$. Finally, since $H \geq^+0$, any right follower $H^R$ satisfies $H^R >^+0$, so that Left has a winning move in $G:H^R+Y$ and Right cannot win in $G:H+Y$ by first playing in $H$. 
\end{proof}

We present now the mis\`ere play versions of Lemma \ref{lem:N2ahead} and Corollary \ref{cor:Nonemore}.
\begin{lem}\label{lem:2ahead} Let $F \in \mathcal{F}$ be a game. If $F$ has a blue Generalized Flower but no red Generalized Flower, then Left has a winning move under mis\`ere play. Consequently, if $F$ has at least two more blue Generalized Flowers than red ones, then Left wins under mis\`ere play, and in her next move she may win by cutting in the stem of any red Generalized Flower.
\end{lem}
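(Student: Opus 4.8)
The plan is to prove the two assertions in turn, in each case strengthening the stated claim with an auxiliary invariant so that the induction closes. The key structural observation is that a blue Generalized Flower is an ordinal sum $G:H$ with green stem $G=*_n$ and blossom $H$, where $G=*_n$ has both a left and a right option (as $n\geq 1$) and $H>^+0$ (its blossom value is positive); these are exactly the hypotheses of Lemma~\ref{lem:ordgreater}. So my first move is always to isolate one blue flower as $*_n:H$ and lump everything else into a remainder $X$, writing $F=*_n:H+X$.

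For the first assertion I would induct on the number of blue flowers, proving the stronger statement that every red-free sum $F$ of blue Generalized Flowers and Stalks satisfies $o^-(F)\neq\mathcal{R}$, and that Left (to move) wins whenever $F$ contains at least one blue flower. The base case is a sum of Stalks, an impartial game, whose outcome is $\mathcal{N}$ or $\mathcal{P}$. For the step, write $F=*_n:H+X$ with $X$ the red-free remainder; since $*_n+X$ is again a red-free sum with one fewer blue flower, the inductive hypothesis gives $o^-(*_n+X)\in\mathcal{N}\cup\mathcal{P}\cup\mathcal{L}$, and Lemma~\ref{lem:ordgreater} then hands Left a winning move in $F=*_n:H+X$. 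Because Left has a winning first move, $o^-(F)\in\{\mathcal{N},\mathcal{L}\}$, which excludes $\mathcal{R}$ and so propagates the invariant.

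For the second assertion, let $d(F)$ be the number of blue flowers minus the number of red flowers, and induct on the total number of edges, proving simultaneously: $(\star)$ if $d(F)\geq 1$ then Left (to move) wins; and (P2) if $d(F)\geq 2$ then $F\in\mathcal{L}^-$. For $(\star)$: if $F$ is red-free then $d(F)\geq 1$ forces a blue flower and the first assertion applies; otherwise Left cuts the stem of any red flower, producing $F'$ with $d(F')=d(F)+1\geq 2$ and strictly fewer edges, so $F'\in\mathcal{L}^-$ by (P2) on the smaller position and Left wins going first. For (P2): Left going first wins by $(\star)$ applied to $F$ (the winning move being to cut any red stem when one exists, and the first-assertion move otherwise), while for Left going second I check every Right reply $F^R$ and verify $d(F^R)\geq 1$, so Left wins $F^R$ by $(\star)$ on the smaller position. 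Here I use that cutting a blue stem lowers $d$ by exactly one (so $d(F^R)\geq 1$ precisely because $d(F)\geq 2$), that any move on a red flower leaves $d$ non-decreasing, and crucially that a Right move inside a blue blossom keeps its value positive, so a blue flower can never be demoted to a red one. This last point is the quoted fact that every right option of a positive-value Red-Blue Hackenbush position again has positive value.

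The main obstacle is the first assertion: the bare claim ``Left has a winning move'' carries too little information through the recursion, and the fix is to strengthen it to the invariant $o^-(F)\neq\mathcal{R}$, which is exactly what supplies the hypothesis $o^-(*_n+X)\in\mathcal{N}\cup\mathcal{P}\cup\mathcal{L}$ needed to reapply Lemma~\ref{lem:ordgreater}. The second, more bookkeeping than conceptual, is the exhaustive check that no Right reply can push $d$ below $1$ when $d(F)\geq 2$; the only threatening move would be turning a blue flower red from within its blossom, and this is ruled out by the sign behaviour of right options in Red-Blue Hackenbush.
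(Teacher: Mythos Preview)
Your proof is correct and follows essentially the same approach as the paper. For the first assertion you and the paper both peel off one blue flower at a time via Lemma~\ref{lem:ordgreater}, with the invariant $o^-(\cdot)\notin\mathcal{R}$ carrying the induction; for the second assertion you give a careful simultaneous induction on edge count where the paper simply says ``Left wins by continually cutting down red Generalized Flowers\ldots eventually there will be no red Generalized Flowers, at least one blue Generalized Flower and it will be Left's turn''---your $(\star)$/(P2) pair and the case check on Right's replies are exactly the verification that makes that terse strategy argument rigorous.
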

\begin{proof}
If $F$ has a blue Generalized Flower but no red Generalized Flower, then $F$ is of the form $*_{h_1}:B_1+*_{h_2}:B_2 + \cdots + *_{h_n}:B_n + F'$, where $F'$ is a stalks game, $*_{h_1} + \cdots + *_{h_n} + F' \in \mathcal{N}^- \cup \mathcal{P}^-$, and $B_i >^+0$ for all $i$. Hence by using Lemma \ref{lem:ordgreater} and using induction on $n$, it is easy to see that Left has a winning move.
If $F$ has at least two more blue Generalized Flowers than red ones, Left wins by continually cutting down red Generalized Flowers to Stalks of any height. Eventually, there will be no red Generalized Flowers, at least one blue Generalized Flower and it will be Left's turn, and so Left wins.
\end{proof}

\begin{cor}\label{cor:onemore}
If there is exactly one more blue Generalized Flower than the number of red Generalized Flowers and Right has a winning move under mis\`ere play, then this winning move is to cut in the stem of a blue Generalized Flower.
\end{cor}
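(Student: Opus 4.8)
The plan is to prove the Corollary by elimination on Right's moves: I will show that every move available to Right other than cutting the stem of a blue Generalized Flower hands Left, to move, a win, so that any winning move Right possesses must be a cut in a blue stem. Throughout, $F$ has $b$ blue and $r = b-1$ red Generalized Flowers (one more blue than red), and ``Right has a winning move'' means Right can move to some $F'$ that Left, now on move, loses, i.e.\ $o^-(F') \in \mathcal{P} \cup \mathcal{R}$. I would then classify Right's moves by their effect on the flower census and rule each non-blue-stem move out by exhibiting $o^-(F') \in \mathcal{N} \cup \mathcal{L}$.

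First I would isolate the one-ahead fact that does the real work: \emph{in any position with exactly one more blue than red Generalized Flower, Left moving first wins}, so that its outcome lies in $\mathcal{N} \cup \mathcal{L}$ (call this (\dag)). If there is no red flower, this is exactly the first assertion of Lemma~\ref{lem:2ahead}. If there is a red flower, Left cuts its stem down to a Stalk, producing a position with two more blue flowers than red; by Lemma~\ref{lem:2ahead} such a position is a win for Left regardless of who moves next, so with Right now on move, Right loses and Left wins. This establishes (\dag).

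With (\dag) in hand I would enumerate Right's non-blue-stem moves, noting that after any Right move it is Left's turn in $F'$. Cutting a red stem deletes that flower, leaving two more blue than red, a Left win by Lemma~\ref{lem:2ahead}; cutting inside a Stalk summand leaves the census unchanged (one more blue), so Left wins by (\dag); and a blossom move leaves the flower's height fixed, altering only its blossom value. If Right plays in a blue flower's blossom, the value stays positive---every Right option of a positive Red-Blue position is again positive, as recalled in Section~\ref{npGF}---so the flower stays blue, the census is unchanged, and (\dag) applies. If Right plays in a red flower's blossom and the value stays negative, the census is unchanged and (\dag) applies; if the value reaches $0$ or turns positive, the flower becomes a Stalk or a blue flower, giving at least two more blue than red, which Lemma~\ref{lem:2ahead} settles. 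In every case $o^-(F') \in \mathcal{N} \cup \mathcal{L}$, so Left to move wins, contradicting $o^-(F') \in \mathcal{P} \cup \mathcal{R}$. Hence any winning move for Right cuts the stem of a blue Generalized Flower.

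The hard part is not any single case but recognising that the census-preserving moves (Stalk cuts and value-preserving blossom plays) keep Right one flower behind, so they cannot be dismissed by the two-ahead Lemma~\ref{lem:2ahead} directly and genuinely require the one-ahead fact (\dag). Proving (\dag) is therefore the crux, and it leans on reading Lemma~\ref{lem:2ahead} in its full strength---that a two-ahead position is an outright Left win (outcome $\mathcal{L}$) and hence a loss for whoever is on move---rather than merely a first-player win for Left; otherwise Left's reduction of a one-ahead position to a two-ahead one would not guarantee that Right, then on move, loses. A secondary technical point I must not skip is that Right can never push a blue blossom down to value $\le 0$ in a single move, which is exactly the sign structure of the options of a positive Red-Blue position recalled in Section~\ref{npGF} and is what keeps blue flowers blue under all of Right's blossom plays.
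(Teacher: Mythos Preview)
Your proof is correct and follows essentially the same route as the paper's: the paper's three-line argument simply observes that after any non-blue-stem move by Right, Left can cut down a red Generalized Flower to reach a two-ahead position and invoke Lemma~\ref{lem:2ahead}. Your version makes this explicit by isolating the one-ahead fact~(\dag) and carrying out the case analysis on Right's moves in full (including the blossom-sign bookkeeping), but the underlying idea is identical.
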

\begin{proof}
Suppose Right does not cut down a blue Generalized Flower. Then Left cuts down a red Generalized Flower, leaving a game with at least two more blue Generalized Flowers than red ones, so by Lemma \ref{lem:2ahead}, Left wins.
\end{proof}

By Lemma \ref{lem:2ahead} and Corollary \ref{cor:onemore}, we see that to find the outcome of any game in $\mathcal{F}$, it suffices to understand game positions with $n$ blue and $n$ red Generalized Flowers, where $n \in \mathbb{N^+}$.

\subsection{Mis\`ere Play Generalized Sprigs and Stalks}\label{Sprigs}	
	
The mis\`ere outcome classes for sums of Sprigs were completely determined by McKay, Milley and Nowakowski in \cite{sprigs}. They showed that $o^-(G) = o^+(G+*)$ and $o^+(G)=o^-(G+*)$ for all such games. This section uses similar methods to show analogous results for disjunctive sums of Generalized Sprigs and extends them to games of Generalized Sprigs where the Stalks can have height greater than $1$.

Let $G= \sum_{i=1}^m *:(x_i) + \sum_{j=1}^n *:(-y_j) + A$ be a sum of Generalized Sprigs and Stalks. Let $x_i, y_j >0$ and let $A$ be a game of Stalks with nim-sum $a$. We note that by Theorem \ref{thm:*equal}, we may assume that there is no pair $x_i$ and $y_j$ such that $x_i=-y_j$. 

Following the notation of \cite{sprigs}, we define the \textit{advantage} of $G$ to be $\Delta(G)=m-n$. We define the \textit{edge} of $G$ to be $\epsilon(G) = \min\{x_i\}-\min\{y_j\}$, where if either $\{x_i\}$ or $\{y_j\}$ is empty, we take $\epsilon(G)=0$. 

\begin{thm}\label{thm:Generalized Sprigs}
Let $G = \sum_{i=1}^m *:(x_i) + \sum_{j=1}^n *:(-y_j) + A$ be a disjunctive sum of Sprigs and Stalks, where $0<x_1 \leq \cdots \leq x_m, 0<y_1 \leq \cdots \leq y_n$ and $A$ is a game of Stalks with nim-sum $a$.
\begin{itemize}
\item[(i)] If $|\Delta(G)| \geq 2$, then
\vspace{-10pt}\begin{eqnarray*}
o^+(G) = o^-(G^*)= \left\{
\begin{array}{cc}
\mathcal{L} & \text{ if } \Delta(G) \geq 2 \\
\mathcal{R} & \text{ if } \Delta(G) \leq -2 .
\end{array}
\right.
\end{eqnarray*}
\item[(ii)]If $|\Delta(G)| = 1$, then
\begin{eqnarray*}
o^+(G) = o^-(G^*)= \left\{
\begin{array}{cc}
\mathcal{N} & \text{ if } \Delta(G)=1 \text{ and } \epsilon(G) \leq 0 \text{ and } a = 0 \\
& \text{ or } \Delta(G)=-1 \text{ and } \epsilon(G) \geq 0 \text{ and } a=0\\
\mathcal{L} & \text{ if } \Delta(G)=1 \text{ and } \epsilon(G)>0 \text{ and } a=0 \\
& \text{ or } \Delta(G)=1 \text{ and } a \neq 0\\
\mathcal{R} & \text{ if } \Delta(G)=-1 \text{ and } \epsilon(G)<0 \text{ and } a=0 \\
& \text{ or } \Delta(G)=-1 \text{ and } a \neq 0 .
\end{array}
\right.
\end{eqnarray*}
\item[(iii)]If $|\Delta(G)| = 0$, then
\begin{eqnarray*}
o^+(G) = o^-(G^*) =\left\{
\begin{array}{cc}
\mathcal{N} & \text{ if } a \neq 0\\
\mathcal{P} & \text{ if } \epsilon(G) = 0 \text{ and } a = 0\\
\mathcal{L} & \text{ if } \epsilon(G)>0 \text{ and } a=0\\
\mathcal{R} & \text{ if } \epsilon(G)<0 \text{ and } a=0 .
\end{array}
\right.
\end{eqnarray*}
\end{itemize}
\end{thm}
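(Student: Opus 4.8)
The plan is to prove Theorem \ref{thm:Generalized Sprigs} by establishing the normal-play outcome $o^+(G)$ directly via the known Flower lemmas, and separately establishing the misère-play outcome $o^-(G^*)$ via explicit strategy arguments, then observing the two agree case by case. For the normal-play side, the key observation is that a Generalized Sprig $*:(x)$ is just a Flower of height $1$, so $G$ is a game of Flowers and the results summarized at the end of Section \ref{npGF} apply. When $|\Delta(G)| \geq 2$ the outcome follows immediately from Conway's Two-Ahead Ploy (Lemma \ref{lem:N2ahead}): the player with two extra Flowers wins. When $|\Delta(G)|=1$ or $0$, after each player cuts down one Flower the position is a Stalks game, so Lemmas \ref{lem:Nn1,b<c} and \ref{lem:Nn1,b=c} govern the play; since all stems have height $1$, we have $b_1=c_1=1$, $\alpha=0$, and $b_1-1=0$, which collapses the upper-nim-sum conditions to conditions purely on the sign of $\epsilon(G)$ and on whether $a=0$. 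I would carry out this specialization explicitly, since it is what converts the general Flower formulas into the clean trichotomy stated in the theorem.

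For the misère side, I would mirror the structure of the normal-play argument but use the misère tools developed in Sections \ref{Shrubs} and \ref{GenFlowers}. The case $|\Delta(G)| \geq 2$ follows directly from Lemma \ref{lem:2ahead}, the misère Two-Ahead Ploy. For $|\Delta(G)|=1$, Corollary \ref{cor:onemore} tells us that the disadvantaged player's only hope is to cut down one of the majority player's Flowers, reducing to a position with equal numbers of red and blue Flowers; combined with Lemma \ref{lem:ordgreater}, which controls when cutting an ordinal-sum Flower is winning, this pins down the outcome in terms of $\epsilon(G)$ and $a$. For $|\Delta(G)|=0$, the heart of the matter is a direct strategy analysis of the symmetric position $\sum *:(x_i) + \sum *:(-y_j) + A$: each player will cut down one of the opponent's Sprigs to a Stalk, trying to steer the resulting nim-value of the Stalks game, and I would show that the first player wins exactly when $a \neq 0$ or when $\epsilon(G)$ has the favorable sign, using $*+* \equiv^- 0 \pmod{\mathcal{D}}$ (Allen's result) and Theorem \ref{thm:*equal} to cancel matched pairs so that the residual edge $\epsilon(G)$ is what remains to be contested.

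The main obstacle will be the misère analysis of the balanced case $|\Delta(G)|=0$ with $a=0$, where the outcome is decided entirely by the sign of $\epsilon(G)$. Here I cannot appeal to a two-Flower advantage and must argue a genuine strategy-stealing or pairing argument: the idea is that Left pairs up the red and blue Sprigs, cancels pairs of equal-and-opposite blossom value via Theorem \ref{thm:*equal}, and after cancellation the minimum surviving blossom value determines who can force the last favorable cut. I would need to verify that the misère parity works out correctly, namely that the $G$ versus $G^*=G+*$ distinction is exactly what toggles the final move, which is why the definition of the evil twin, adding a single $*$ precisely when all components have height $1$, makes the misère outcome of $G^*$ coincide with the normal outcome of $G$. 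Establishing this parity bookkeeping rigorously, keeping track of whose turn it is when the last Sprig is cut down, is where the real care is required; the remaining cases are comparatively routine applications of the cited lemmas.
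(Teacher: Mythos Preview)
Your plan is broadly correct and uses the same toolkit as the paper (the Two-Ahead Ploy, Corollary~\ref{cor:onemore}, Lemma~\ref{lem:ordgreater}, and the cancellation result Theorem~\ref{thm:*equal}), but your organization differs from the paper's in a way that makes your life harder than necessary. The paper does \emph{not} treat the mis\`ere cases (ii) and (iii) by separate direct strategy analyses; instead it proves them \emph{simultaneously} by induction on $m+n$, with explicit base cases $m+n=0,1,2$. The point is that cutting down one Sprig changes $\Delta$ by $\pm 1$, so the $|\Delta|=0$ and $|\Delta|=1$ cases feed into each other: in the inductive step with $|\Delta|=0$ and $\epsilon(G)>0$, Left simply cuts the red Sprig of largest blossom value, landing in a position with $\Delta=1$ and $\epsilon>0$, which is a Left win by the inductive hypothesis for (ii). This completely dissolves the ``main obstacle'' you identify in your last paragraph, with no pairing or strategy-stealing argument needed beyond the base case $m=n=1$.

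Two smaller remarks. First, your phrase ``after each player cuts down one Flower the position is a Stalks game'' is only literally correct when $m+n\leq 2$; for larger $m+n$ you need precisely the induction the paper runs. Second, your comment that each player is ``trying to steer the resulting nim-value of the Stalks game'' is misleading for Sprigs: since every stem has height $1$, cutting a stem contributes a Stalk of height $0$, so the nim-sum $a$ never changes during this phase. This is exactly why the outcome collapses to conditions on $a$ and $\epsilon(G)$ alone, and it is worth stating explicitly rather than as a residue of the general Flower analysis. Your plan to derive the normal-play column by specializing Lemmas~\ref{lem:Nn1,b<c} and~\ref{lem:Nn1,b=c} with $b_1=c_1=1$, $\alpha=0$ is fine (indeed more explicit than the paper, which treats the normal-play outcomes as already known from Section~\ref{npGF}), but you should frame the reduction to the $n=1$ Flowerbed via the same induction on $m+n$ rather than an informal ``players alternate'' claim.
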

\begin{proof}
Statement (i) is true by Lemma \ref{lem:2ahead}. We prove statements (ii) and (iii) concurrently by induction on $m+n$.

\noindent\textit{Base cases: $m+n=0$, $m+n=1$ and $m+n=2$}

If $m+n=0$, then $m=n=0$, $\epsilon(G)=0$ and the results for $o^+(G)$ and $o^-(G^*)$ follow from the results for mis\`ere play nim. 

If $m=1$, $n=0$, the game $G^*$ is of the form $*:B+A^*$, where $*$ is the stem of the blue Generalized Flower, $B$ is its blossom and $A^*$ is the remaining Stalks game. As $*+A^*$ is an impartial game, $o^-(*+A^*) \in \mathcal{N} \cup \mathcal{P}$, and so by taking $G=*$, $H=B$ and $X=A^*$ in Lemma \ref{lem:ordgreater}, Left has a winning move under mis\`ere play. Further, if Right has a winning move, it must be to cut down the blue flower, so Right has a winning move if and only if $A^* \in \mathcal{P}^-$, which, by the results for nim in Lemma \ref{lem:nim}, occurs if and only if $a=0$. The case $m=0$, $n=1$ is similar.

If $m=1$ and $n=1$, then $\epsilon(G) \neq 0$, as we have assumed that no pair of sprigs cancels. Suppose $\epsilon(G)>0$. If $a \neq 0$, the next player wins by cutting the stem of the opponent's flower. If $a = 0$, Left can win using the following strategy. She plays only in the blossoms of the two flowers, until Right cuts in the stem of a flower, or a stalk. If Right cuts in a stalk, then Left moves to a follower $G'$ of $G^*$, where ${(G')}^*$ has nim-sum $a' \neq 0$ and so Left wins playing next. If Right cuts in a stem, then Left cuts in the other stem, leaving a stalks game $G''$ where $(G'')^*$ has nim-sum $0$, and so the previous player (Left) wins. The case $\epsilon(G)<0$ is similar.\vspace{10pt}\\
\noindent\textit{Inductive Step}

We note first that since $m+n\geq 3$, $\epsilon(G) \neq 0$, so suppose without loss of generality that $\epsilon(G)>0$. 

If $|\Delta(G)|=0$, Left wins playing next by cutting in the stem of the red Generalized Flower with blossom value $y_n$, moving to a follower $G'$ of $G^*$ with $\Delta((G')^*)=1$ and $\epsilon((G')^*) > 0$, which Left wins by induction. Further, if Right has a winning move, it must be to cut down a blue flower, moving to a game $G''$ with $\Delta((G'')^*)=-1$ and $\epsilon((G')^*) \geq 0$, so by induction Right has a winning move if and only if $a \neq 0$. 

If $\Delta(G) = 1$ and Left plays first, she cuts down a red flower and wins by the two-ahead ploy. Further, if Right has a winning move, it must be to cut down a blue flower, moving to a follower $G'$ of $G^*$ with $|\Delta((G')^*)|=0$ and $\epsilon((G')^*) \geq \epsilon(G) > 0$, so by induction Right cannot win.

If $\Delta(G) = -1$ and Right plays first, he cuts down a blue flower and wins by the two-ahead ploy. Further, if Left has a winning move, it must be to cut down a red flower, moving to a game $G''$ with $|\Delta((G'')^*)|=0$ and $\epsilon((G'')^*) \geq \epsilon(G) > 0$, so by induction Left has a winning move if and only if $a = 0$.



\end{proof}

\subsection{Generalized Flowerbeds with no Canceling Generalized Flowers}\label{nocancel}
Let $F \in \mathcal{F}$ be a game position with $n$ blue Generalized Flowers, $n$ red Generalized Flowers, and Stalks $A$ with nim-sum $a$. We call such a game a \textit{Generalized Flowerbed}.

Let the blue Generalized Flowers have heights $b_1 \geq b_2 \geq \cdots \geq b_n$ and positive Blossom normal-play values $\{x_i\}$, where if $b_i=b_{i+1}$ then $x_i \leq x_{i+1}$. Similarly define red Generalized Flower heights $c_1 \geq c_2 \geq \cdots \geq c_n$ and negative blossom values $\{-y_i\}$. 

Given two Generalized Flowers $F_1 = *_{h_1}:(x_1)$ and $F_2 = *_{h_2}:(x_2)$, we say that $F_1$ is \textit{weaker} than $F_2$ and $F_2$ is \textit{stronger} than $F_1$ if $h_1>h_2$, or $h_1=h_2$ and $|x_1|<|x_2|$. We say that $F_1$ and $F_2$ are \textit{equally weak} if $h_1=h_2$ and $x_1=x_2$. If a red Flower and a blue Flower are equally weak, we note that they form a zero-sum game under normal play and say that they \textit{cancel}. In Section \ref{cancel}, we show that, given a Generalized Flowerbed $F$, we may either remove all pairs of canceling flowers to obtain a game $F_t$ such that $o^-(F) = o^+(F_t)$, or replace a maximal set of canceling flowers with the game $*$ to obtain a simpler game $F'$ such that $o^-(F) = o^-(F')$. In this section, assume that we have a Generalized Flowerbed with a Generalized Flower of height at least $2$ such that no pair of Generalized Flowers cancels, and prove Theorem \ref{thm:main} for such game positions. Note that in these cases, $F^*=F$.

\subsubsection{Generalized Flowerbeds of Size $1$}

Let $G$ be a Generalized Flowerbed with $n=1$, a Generalized Flower of height at least $2$, and no pairs of Generalized Flowers that cancel. The normal play outcomes of such a game were presented in Lemma \ref{lem:Nn1,b<c} and Lemma \ref{lem:Nn1,b=c}. We find the mis\`ere play outcomes of these games and show that they are identical to the outcomes under normal play. This, together with the results for Generalized Sprigs in Section \ref{Sprigs}, proves Theorem \ref{thm:main} for Generalized Flowerbeds of size $1$.

\begin{prop}\label{prop:n1,b<c}Let $F \in \mathcal{F}$ be a position with $n=1$ and $b_1 < c_1$. Then 
\begin{eqnarray*}
o^+(F) = o^-(F) = \left\{ 
\begin{array}{cc}
\mathcal{N} & \text{ if } a ~\text{\sout{$\uparrow$}}~  (b_1 - 1) \geq c_1\\
\mathcal{L} & \text{ if } a ~\text{\sout{$\uparrow$}}~  (b_1 - 1) < c_1 .
\end{array}
\right.
\end{eqnarray*}
\end{prop}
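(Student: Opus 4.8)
The normal-play half of the statement, $o^+(F)=\mathcal N$ or $\mathcal L$ according as $a ~\text{\sout{$\uparrow$}}~(b_1-1)\ge c_1$ or $<c_1$, is exactly Lemma \ref{lem:Nn1,b<c} together with the observation at the end of Section \ref{npGF} that the normal-play Flower analysis depends only on $b_1,c_1,x_1,y_1$ and $a$ and so carries over verbatim to Generalized Flowers. All the content is therefore to show that the mis\`ere outcome $o^-(F)$ agrees with this value. I would prove this by induction on the number of edges of $F$, carrying along the formula for every smaller Generalized Flowerbed of size $1$ as well as for the single-flower positions that arise when a stem is cut.

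The engine of the argument is a one-flower lemma, which I would establish first by a parallel induction: for a single red Generalized Flower of height $c\ge 2$ sitting over a Stalk game with nim-sum $s$, the mis\`ere outcome is $\mathcal N$ when $s<c$ and $\mathcal R$ when $s\ge c$ (and symmetrically $\mathcal N$/$\mathcal L$ for a single tall blue Flower, comparing $s$ with its height), while a single Flower of height $1$ is a Sprig and its outcome is supplied directly by Theorem \ref{thm:Generalized Sprigs}. Cutting a stem reduces $F$ to one of these single-flower positions: cutting the blue stem to height $k\le b_1-1$ leaves a red Flower of height $c_1\ge 2$ over a Stalk game of nim-sum $a\oplus k$, and cutting the red stem to height $j\le c_1-1$ leaves a blue Flower of height $b_1$ over nim-sum $a\oplus j$. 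Two arithmetic facts drive the choice of cut: $a\oplus k\le a ~\text{\sout{$\uparrow$}}~(b_1-1)$ for every admissible $k$, and $a ~\text{\sout{$\uparrow$}}~ n\ge n$ for all $n$ (since $a ~\text{\sout{$\uparrow$}}~ n$ is increasing in its first argument and $0 ~\text{\sout{$\uparrow$}}~ n=n$), so that a cut of the red stem can realise any nim-sum up to $a ~\text{\sout{$\uparrow$}}~(c_1-1)\ge c_1-1\ge b_1$.

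With the one-flower lemma in hand I would split into the two cases. When $a ~\text{\sout{$\uparrow$}}~(b_1-1)\ge c_1$ (the $\mathcal N$ case) I would show each player moving first wins: Right cuts the blue Flower to a height $k$ with $a\oplus k\ge c_1$, which exists by definition of the upper nim-sum, leaving a tall red Flower over nim-sum $\ge c_1$, an $\mathcal R$ position in which Left is now to move and loses; Left symmetrically cuts the red Flower to a height $j$ with $a\oplus j\ge b_1$ (possible by the bound above), leaving a tall blue Flower that is an $\mathcal L$ position with Right to move. When $a ~\text{\sout{$\uparrow$}}~(b_1-1)<c_1$ (the $\mathcal L$ case) I would give Left a winning strategy from either side. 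Moving second she answers each Right move: a cut of the blue stem hands her a tall red Flower over nim-sum $a\oplus k<c_1$, an $\mathcal N$ position she wins as the mover, while a Stalk or blossom move leaves a smaller size-$1$ Flowerbed whose outcome, by induction and the formula, is $\mathcal N$ or $\mathcal L$, each a win for the player to move; here I would use the facts recalled in Section \ref{npGF} that a move by the non-owner of a Flower keeps its blossom value strictly of the original sign, so no blossom move changes a Flower's colour or turns it into a Stalk. Moving first she cuts the red Flower to the height dictated by the one-flower lemma to leave a blue-Flower position that is $\mathcal L$ with Right to move.

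The main obstacle is the mis\`ere endgame, which is precisely where the outcome can deviate from normal play: once a Flower is cut to height $1$ the residual single-Sprig positions are no longer governed by the clean nim-comparison of the one-flower lemma but by Theorem \ref{thm:Generalized Sprigs}, and the parity of a pure Stalk game can flip when every surviving heap has height $1$. The delicate part of the proof is therefore checking that in every parameter regime Left (or the first player) actually has a cut to a single-flower position of the required outcome, reconciling the tall-Flower condition $s\gtrless$ height with the different Sprig condition of Theorem \ref{thm:Generalized Sprigs}, and verifying that the hypotheses $b_1<c_1$, hence $c_1\ge 2$ and $F^*=F$, keep the decisive red Flower tall throughout so that these Sprig corrections are only ever needed on the losing side.
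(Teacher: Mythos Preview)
Your approach is correct and genuinely different from the paper's. The paper argues directly by order of play: it first shows that Left moving first always wins (splitting into the case where some Stalk has height $\ge 2$, where cutting the red Flower to maximise the Stalk nim-sum and invoking Corollary~\ref{cor:onemore} forces a non-zero Stalks endgame with a tall heap, and the case where all Stalks have height $1$, handled by three ad~hoc sub-cases on $b_1$ and the number of Stalks); it then analyses Right moving first, observing that Right must cut the blue stem and splitting on whether $a~\text{\sout{$\uparrow$}}~(b_1-1)\ge c_1$.

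You instead isolate a clean ``one-flower'' lemma (a single tall Flower over Stalks of nim-sum $s$ has mis\`ere outcome determined by $s$ versus the height), prove it independently, and then run an induction on edges, splitting by the target outcome class rather than by who moves. This buys you modularity: the mis\`ere endgame subtleties are confined to the one-flower lemma and the Sprig case of Theorem~\ref{thm:Generalized Sprigs}, so the main argument has no height-$1$ sub-cases. The paper's route buys directness---no auxiliary lemma---at the cost of the explicit case split when every Stalk has height $1$. One small omission in your $\mathcal L$, Left-second analysis: you should also note that Right cutting the \emph{red} stem leaves a position with a blue Flower and no red one, which Lemma~\ref{lem:2ahead} handles immediately; and the passage ``leaving a tall blue Flower'' needs the $b_1=1$ branch via Theorem~\ref{thm:Generalized Sprigs} that you flag at the end.
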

\begin{proof}
We show first that if Left moves first, she wins under mis\`ere play. 

Suppose that Left moves first and that there is a Stalk of height at least $2$. We show that Left can win by cutting the red Generalized Flower to a Stalk of height $c_1'<c_1$ such that $a ~\text{\sout{$\uparrow$}}~ (c_1-1) = a \oplus c_1'$. 
If Left does so, by Corollary \ref{cor:onemore}, Right must then cut the blue Generalized Flower to a Stalk of height $b_1' < b_1$. This leaves Stalks with nim-sum $b_1' \oplus (a \oplus c_1')$, where 
\begin{eqnarray*}
a \oplus c_1' = a ~\text{\sout{$\uparrow$}}~ (c_1-1) \geq c_1-1 \geq b_1 > b_1'.
\end{eqnarray*}
Since $a \oplus c_1' > b_1'$, their nim-sum is not 0, and so because there is a Stalk of height at least $2$, Left plays next and wins. 

Suppose that Left moves first and all the Stalks have height $1$. As pairs of Stalks of height $1$ can be cancelled, we then have the following cases:
\begin{itemize}
\item \textit{Case 1: $b_1=1$}\\
Left cuts the red Generalized Flower to a Stalk of height $a=0$ or $1$. Then by Corollary \ref{cor:onemore}, Right must cut the stem of the blue Generalized Flower, leaving an even number of Stalks of height 1. Hence Left wins.
\item \textit{Case 2a: $b_1 >1$, no Stalks}\\
Left cuts the red Generalized Flower to a Stalk of height $b_1$, leaving a game with nim-sum 0 and at least one Stalk of height at least $2$. Hence Left wins. 
\item \textit{Case 2b: $b_1>1$, one Stalk}\\
Left cuts the Stalk. If Right does not cut the blue Generalized Flower to a Stalk, we return to Case 2a and Left wins. Hence Right must cut the blue Generalized Flower to a Stalk of height $b_1'<b_1$, and Left cuts the red Generalized Flower to a Stalk of height $0$, $1$ or $b_1'$, at least one of which will be a winning move.
\end{itemize}
Suppose that Right moves first. If he does not cut down the blue Generalized Flower, we move either to a game with $n=1$ and $b_1<c_1$ with Left first, or to a game with blue Generalized Flowers but no red ones, so Left wins. Hence Right cuts the blue Generalized Flower to a Stalk of height $b_1'<b_1$.

If $a ~\text{\sout{$\uparrow$}}~  (b_1 - 1) \geq c_1$, Right takes $b_1'$ so that $a \oplus b_1' = a ~\text{\sout{$\uparrow$}}~  (b_1 - 1) \geq c_1$. Left then must cut the red Generalized Flower to a Stalk of height $c_1' < a \oplus b_1'$, leaving a Stalks game with non-zero nim-sum. Since $ a ~\text{\sout{$\uparrow$}}~  (b_1 - 1) \geq c_1>b_1 \geq 1$, there must be a Stalk of height at least $2$ and Right wins.

If $a ~\text{\sout{$\uparrow$}}~  (b_1 - 1) < c_1$, then Left can cut the red Generalized Flower to a height of $0$, $1$, or $a \oplus b_1'$, at least one of which is a winning position.
\end{proof}

\begin{prop}\label{prop:n1,b=c} Let $F \in \mathcal{F}$ be a position with $n=1$ and $b_1=c_1 \geq 2$. Let $\alpha$ be the largest integer such that $2^{\alpha} | b_1$. Then
\vspace{-10pt}
\begin{eqnarray*}
o^+(F) = o^-(F) = \left\{ 
\begin{array}{cc}
\mathcal{N} & \text{ if } a\geq 2^{\alpha} \text{ or } 0 \neq a < 2^{\alpha}, x_1=y_1 \\
\mathcal{P} & \text{ if } a = 0, x_1 = y_1 \\
\mathcal{L} & \text{ if } a < 2^{\alpha}, x_1 > y_1 \\
\mathcal{R} & \text{ if } a < 2^{\alpha}, x_1 < y_1.
\end{array}
\right.
\end{eqnarray*}
\end{prop}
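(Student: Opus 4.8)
The plan is to prove the proposition by showing that the mis\`ere outcome $o^-(F)$ coincides with the normal-play outcome $o^+(F)$, which is already supplied by Lemma \ref{lem:Nn1,b=c}. Since $b_1 = c_1 \geq 2$ there is a Generalized Flower of height at least $2$, so $F^* = F$ and it suffices to determine $o^-(F)$. I would argue by induction on the total number of edges of $F$, mirroring the structure of the proof of Proposition \ref{prop:n1,b<c}, and cut down the number of cases at the outset using the colour-reversing symmetry that negates every blossom value and interchanges the roles of Left and Right. This symmetry fixes the Stalks (as $-*_k = *_k$, so $a$ and hence $\alpha$ are unchanged), swaps $x_1 \leftrightarrow y_1$, and swaps $\mathcal{L} \leftrightarrow \mathcal{R}$ while fixing $\mathcal{N}$ and $\mathcal{P}$; thus I may assume $x_1 \geq y_1$ and treat only the outcomes $\mathcal{N}$, $\mathcal{P}$ and $\mathcal{L}$.

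The backbone of the argument is that, because $F$ has exactly one blue and one red Generalized Flower, Corollary \ref{cor:onemore} (and its Left-analog obtained from the symmetry above) forces the decisive play to be the cutting of the \emph{opponent's} flower: whenever a player fails to cut down the opponent's flower, the opponent gains a two-flower advantage and wins by Lemma \ref{lem:2ahead}. Consequently the contest reduces, after at most two flower-cuts, to a pure game of Stalks, and the height to which each flower is cut is governed exactly as in normal play, with the first cutter choosing the height to maximize, and the second to minimize, the nim-sum of the resulting Stalks; these optima are the upper and lower nim-sums of Lemma \ref{lem:updown}. The threshold $2^{\alpha}$ enters precisely here: since $\alpha$ is the $2$-adic valuation of $b_1$, the binary expansion of $b_1 - 1$ has all bits below position $\alpha$ set and bit $\alpha$ equal to $0$, so whether $a \geq 2^{\alpha}$ or $a < 2^{\alpha}$ controls whether the first cutter can force, via $a ~\text{\sout{$\uparrow$}}~ (b_1-1)$, a Stalk nim-sum that the equal-height second flower cannot re-balance. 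When $a \geq 2^{\alpha}$ the imbalance is decisive and the position is $\mathcal{N}$; when $a < 2^{\alpha}$ the second cutter can always restore nim-sum $0$ by cutting the remaining flower to the matching height, and the outcome is decided by finer data.

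In the regime $a < 2^{\alpha}$ the decision passes to the blossoms, and here I would use a mirroring strategy in the spirit of Theorem \ref{thm:*equal}: the player with the blossom advantage (Left, under $x_1 > y_1$) answers each of the opponent's blossom moves by a blossom move of her own that keeps the sum of blossom values strictly positive, answers a stem-cut of one flower by an appropriately chosen stem-cut of the other, and responds within $A$ to restore nim-sum $0$, so that she always retains a move and forces the opponent to be the one who exhausts the position. This yields $\mathcal{L}$ when $x_1 > y_1$, while $x_1 = y_1$ removes the tempo and leaves $\mathcal{P}$ when $a = 0$ and $\mathcal{N}$ when $0 \neq a < 2^{\alpha}$. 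To turn these strategic descriptions into a clean induction, each line of play is continued until the flowers are gone, at which point the resulting Stalks position is evaluated by the mis\`ere nim formula of Lemma \ref{lem:nim}, with the matching normal-play target supplied by Lemma \ref{lem:Nn1,b=c}.

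The main obstacle is the endgame in which every remaining Stalk has height $1$: there the mis\`ere nim-value is reversed relative to normal play (for instance $*_1 + *_1 \in \mathcal{N}^-$ while $*_1 + *_1 \in \mathcal{P}^+$), so the flower-cut heights chosen above must be selected not only to control the nim-sum but also the parity of the number of height-$1$ Stalks together with whose turn it is. This forces the same kind of careful sub-casing as in Cases 1, 2a and 2b of Proposition \ref{prop:n1,b<c}, cutting a flower to height $0$, to height $1$, or to the matching height as dictated by the endgame, and it is exactly the blossom tempo guaranteed by $x_1 > y_1$ (or the $x_1 = y_1$ symmetry) that supplies the extra move needed to land on the winning side of this parity. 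Verifying that these cut choices are simultaneously consistent with the upper/lower nim-sum optimization and with the height-$1$ parity is the delicate point, and is where most of the work of the proof will lie.
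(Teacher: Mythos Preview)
Your proposal is on the right track and shares the paper's main ingredients: the colour-reversing symmetry, the threshold $2^{\alpha}$ entering through $a~\text{\sout{$\uparrow$}}~(b_1-1)$, mirroring in blossoms and stems when $a<2^{\alpha}$, and the $\{0,1,h\}$ choice to handle the height-$1$ endgame. A few differences in organization are worth noting.

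The paper does not run an induction on edges; it argues in four direct cases. In Case~1 ($a\geq 2^{\alpha}$) the first player cuts the opponent's flower to the height realizing $a~\text{\sout{$\uparrow$}}~(c_1-1)$, and a short computation shows the resulting Stalks position has nonzero nim-sum \emph{and} a Stalk of height at least~$2$, so the mis\`ere endgame coincides with the normal one automatically---no height-$1$ sub-casing is needed there. Your sentence that ``the contest reduces, after at most two flower-cuts, to a pure game of Stalks'' is accurate only for this case.

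For $a<2^{\alpha}$ the paper first shows (as you do) that whoever cuts a flower first loses, since the other player can match to nim-sum $0$; hence under optimal play both flowers persist. In Case~2 ($x_1>y_1$) the paper's strategy for Left is not ``restore nim-sum $0$ in $A$'' but rather ``cut in the Stalks as long as doing so keeps the nim-sum below $2^{\alpha}$''; once neither side can safely touch Stalks or stems, both are confined to blossoms and the positive blossom sum decides. Your mirroring variant is workable, but you must say what Left does when she moves first with $a=0$: it is a blossom move keeping the sum non-negative, not a Stalk move.

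Finally, you overstate the height-$1$ obstacle. In the paper it arises only in Case~3 (mirroring when $x_1=y_1$, $a=0$) and is dispatched in one line: when Right cuts the blue flower to height $h$, Left cuts the red flower to one of $0$, $1$, or $h$, and one of these is always a mis\`ere-winning Stalks position. The elaborate parity bookkeeping you anticipate is unnecessary.
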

\begin{proof}Let $b_1 = c_1 = 2^{\alpha}+2^{\alpha+1}d$ for some $d \in \mathbb{N}$.
\begin{itemize}
\item \textit{Case 1: $a \geq 2^{\alpha}$}\\
Suppose that Left plays first. Then she cuts the red Generalized Flower to a Stalk of height $c_1' < c_1$ such that $a ~\text{\sout{$\uparrow$}}~ (c_1-1) = a \oplus c_1'$. Since $a \geq 2^{\alpha}$ and $c_1 = 2^{\alpha}+2^{\alpha+1}d$, we have
\begin{eqnarray*}
a \oplus c_1' = a ~\text{\sout{$\uparrow$}}~ (c_1-1) \geq 2^{\alpha+1} ~\text{\sout{$\uparrow$}}~ \left(\sum_{i=0}^{\alpha-1} 2^{i} + 2^{\alpha+1}d\right) = 
\sum_{i=0}^{\alpha}2^i + 2^{\alpha+1}d  >c_1 .
\end{eqnarray*}
Then by Corollary \ref{cor:onemore}, Right must cut the blue Generalized Flower to a Stalk of height $b_1' < c_1$. This leaves Stalks with nim-sum $b_1' \oplus (a \oplus c_1')$, where $a \oplus c_1' > b_1'$, so that their nim-sum is not 0 and there is at least one Stalk of height 2. Hence Left wins under both normal and mis\`ere play. Similarly, if Right plays first, Right wins.
\item \textit{Case 2: $a < 2^{\alpha}$ and $x_1>y_1$, or $a < 2^{\alpha}$ and $x_1<y_1$}\\
In the first subcase, as $a ~\text{\sout{$\uparrow$}}~ (b_1-1) =a ~\text{\sout{$\uparrow$}}~ (c_1-1) = b_1-1$, if Left cuts $c_1$ to a Stalk of height $c_1'<c$, then Right can cut $b_1$ to a Stalk of height $0$, $1$ or $a \oplus c_1' < b_1$, at least one of which is a winning move. Similarly, if Right cuts $b_1$ then Left has a winning move. Hence the first player will always cut an edge in the Stalks or in a Blossom. Moreover, if a player cuts a Stalk so as to increase the Stalk nim-sum to $a' \geq 2^{\alpha}$ then by Case 1 the other player wins.

Left's winning strategy is then as follows: cut edges in the Stalks until either there are no more Stalks or removing any edge in a Stalk would increase the Stalk nim-sum to at least $2^{\alpha}$. Then they are in some position with Stalk nim-sum $a' < 2^{\alpha}$ and $x_1>y_1$ blossom values, where neither player can play in the Stalks or stems. Then Right is forced to play only in the Blossoms, and Left is able to retain a game with positive normal play value in the Blossoms, so that in particular there is always a blue Generalized Flower. Eventually one Generalized Flower is blue, the other a Stalk and it is Left's turn, and hence Left wins. Similarly in the other subcase Right wins.
\item \textit{Case 3: $a < 2^{\alpha}$, $x_1=y_1$ and $a = 0$}\\
Let Left be the second player. Then Left wins with the following strategy. If Right cuts the blue Generalized Flower to a Stalk of height $h$, Left cuts the red Generalized Flower to a Stalk of height $0$, $1$ or $h$, at least one of which will be a winning move. If Right  cuts in a Blossom, the new blossom values $x_1'$ and $-y_1'$ satisfy $x_1'>y_1'$ so that by Case 2, Left wins. If Right cuts a Stalk, then Left is able to cut a Stalk to keep the Stalk nim-sum $0$. Eventually Right must cut a Generalized Flower Stalk or Blossom and Left wins. Similarly, if Right plays second, he wins.
\item \textit{Case 4: $a < 2^{\alpha}$, $x_1=y_1$ and $a \neq 0$}\\
Then the first player cuts a Stalk so that the new Stalk nim-sum is $0$, and by Case 3 they win.
\end{itemize}
\end{proof}

\subsubsection{Generalized Flowerbeds of Size $n \geq 2$}

In this section, we prove Theorem \ref{thm:main} for Generalized Flowerbeds of size $n \geq 2$ containing a Generalized Flower of height at least $2$, such that no pair of Generalized Flowers cancels. We proceed by first showing that the theorem is true when the player with the weakest Generalized Flower plays second.

\begin{thm}\label{thm:1win}
Let $F \in \mathcal{F}$ be a Generalized Flowerbed such that there is a Generalized Flower of height at least $2$, there are no pairs of canceling Flowers, and the weakest Generalized Flower is red. Then, under both play conventions, Left has a winning move. Moreover, if $n \geq 2$, then Left has a move that is winning under both normal and mis\`ere play.
\end{thm}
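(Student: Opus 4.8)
The plan is to prove the sharper statement that whenever it is Left's turn in such a flowerbed she wins, and to do this by induction on $n$. The base case $n=1$ is exactly the content of Propositions \ref{prop:n1,b<c} and \ref{prop:n1,b=c}: the hypothesis that the weakest flower is red translates into either $b_1<c_1$, or $b_1=c_1\geq 2$ with $x_1>y_1$, and in every outcome listed there ($\mathcal{N}$ or $\mathcal{L}$) the player to move, Left, has a winning reply under both conventions. So I may assume $n\geq 2$, where I will exhibit a single move that is simultaneously correct for normal and misère play.

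Since the weakest flower is red and some flower has height at least $2$, the tallest red flower has height $c_1$ with $c_1\geq b_1$ and $c_1\geq 2$. Left's move is to cut this flower down to a Stalk whose height $c_1'$ maximizes the resulting Stalk nim-sum, i.e. so that $a\oplus c_1' = a~\text{\sout{$\uparrow$}}~(c_1-1)$. Writing $a'=a~\text{\sout{$\uparrow$}}~(c_1-1)$, monotonicity of the upper nim-sum (Lemma \ref{lem:updown}) gives $a'\geq c_1-1\geq b_1-1$. The resulting position $F'$ has exactly one more blue flower than red, so by Conway's two-ahead ploy (Lemmas \ref{lem:N2ahead} and \ref{lem:2ahead}) together with Corollaries \ref{cor:Nonemore} and \ref{cor:onemore}, Right can avoid losing immediately only by cutting some blue flower down to a Stalk. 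After any such reply we reach a balanced flowerbed $F''$ with $n-1$ flowers of each colour, stalks of some new nim-sum, and Left again to move.

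The crux is to certify that $F''$ is a win for the mover, Left. I would argue that Left's nim-sum-maximizing cut keeps the Stalk nim-sum large compared with every surviving flower: because $a'\geq c_1-1$ dominates each remaining flower height, Right's forced reply, which can only lower a blue height below its previous value, cannot pull the position out of Left's favour. Concretely I split on whether all flowers of $F''$ now have height $1$ — in which case $F''$ is a disjunctive sum of Generalized Sprigs and Stalks and its large residual nim-sum places it, via Theorem \ref{thm:Generalized Sprigs}, in an outcome class won by the mover — or some flower of $F''$ still has height at least $2$, in which case I apply the inductive hypothesis, after checking that $F''$ either again has its weakest flower red or else has a Stalk nim-sum so large that the normal-play analysis of Section \ref{npGF} already forces the $\mathcal{N}$ outcome that the mover wins. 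When $n\geq 2$ the position reached after Left's cut still contains at least one flower of each colour, which supplies enough freedom that a single cut can be chosen correct for both conventions; the divergence between normal and misère play shows up only in the last few moves of the pure-Stalk endgame, in the choice between finishing at $0$ and at $*+*$, and this flower absorbs that distinction, whereas for $n=1$ no such slack exists and the two conventions may demand different final cuts.

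The main obstacle is the inductive bookkeeping of the previous paragraph. The hypothesis ``weakest flower is red'' is \emph{not} obviously preserved when the tallest red flower and one blue flower are removed, so the genuine invariant must instead be phrased in terms of the Stalk nim-sum $a'$ remaining large relative to the surviving flower heights. Pinning down this invariant, verifying the required inequalities among $a~\text{\sout{$\uparrow$}}~(c_1-1)$, the $b_i$ and the $c_i$ using Lemma \ref{lem:updown} and the monotonicity of $\text{\sout{$\uparrow$}}$ and $\text{\sout{$\downarrow$}}$, and confirming that the chosen move stays correct under both play conventions at once, is where essentially all the work lies; the two-ahead ploy and Corollaries \ref{cor:Nonemore} and \ref{cor:onemore} then handle the forcing routinely.
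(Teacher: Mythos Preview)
Your base case $n=1$ is correct and matches the paper. The gap is in the inductive step: you have Left cut the \emph{weakest} (i.e.\ tallest) red flower, and then you correctly observe that this destroys the hypothesis ``weakest flower is red'', leaving you to rebuild an invariant out of nim-sum inequalities that you never actually pin down. The paragraph beginning ``The crux is to certify that $F''$ is a win for the mover'' is not a proof; it is a list of cases together with an admission that ``essentially all the work lies'' in checking them. In particular, after Right replies by cutting a blue flower of height $b_k$ down to some $b'<b_k$, the new Stalk nim-sum is $a'\oplus b'$, which Right can make as small as $a'~\text{\sout{$\downarrow$}}~(b_k-1)$; your bound $a'\geq c_1-1$ does not prevent this from being small, and the claim that the resulting position is still $\mathcal{N}\cup\mathcal{L}$ for the mover is exactly what needs proof.

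The paper avoids all of this by making the opposite choice: for $n\geq 2$, Left cuts down any red flower \emph{other than} the weakest one. After Right's forced reply (Corollaries \ref{cor:Nonemore} and \ref{cor:onemore}), the weakest red flower is untouched, it is still weaker than every surviving blue flower, and since it has height $c_1\geq 2$ the height hypothesis persists as well. All three hypotheses therefore descend verbatim to a flowerbed of size $n-1$, and the induction closes with no nim-sum bookkeeping whatsoever. This also immediately gives a single move valid under both conventions: the same cut works because the position it leads to is covered by the same inductive hypothesis under both.
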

\begin{proof}   
We note that if the weakest blue Flower is stronger than the weakest red Flower, then either $b_1<c_1$ or $b_1=c_1 \geq 2$ and $x_1>y_1$. Thus when $n=1$, by Propositions \ref{prop:n1,b<c} and \ref{prop:n1,b=c}, Left has a winning move.

For general $n$, Left simply attacks all the red Generalized Flowers, leaving the weakest one untouched. By Corollary \ref{cor:onemore}, Right must reciprocate, and the players eventually reach the case $n=1$ where the weakest red Flower is still weaker than all the blue Flowers, and Left wins. Thus Left wins under both normal and mis\`ere play by first cutting down any red Generalized Flower that is not the weakest one.
\end{proof}

\begin{cor}\label{cor:2play}
Let $F \in \mathcal{F}$ be a Generalized Flowerbed such that there is a Generalized Flower of height at least $2$, there are no pairs of canceling Flowers, and the weakest Generalized Flower is red. Then if Right has a winning move, it must be to cut down a blue Flower.
\end{cor}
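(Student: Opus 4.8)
The plan is to prove the statement in its contrapositive form: every move available to Right \emph{other} than cutting down a blue Flower leaves Left, now to move, in a winning position, so none of them can be winning for Right. Since cutting down a blue Flower is then the only remaining candidate, any winning move of Right must be of that form. Writing $F'$ for the position Right reaches, and recalling that Right may only cut green edges (Flower stems and Stalks) or red edges (inside Blossoms), I would organize Right's non-blue-cutting moves by their effect on the numbers of blue and red Flowers.

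First I would dispose of the moves that strictly increase Left's Flower advantage: cutting down a red Flower, or cutting a red edge in a red Flower's Blossom so that its value reaches $0$ (the Flower becomes a Stalk) or becomes positive (it becomes blue). In the last case $F'$ has two more blue Flowers than red, and Left wins at once by Lemma \ref{lem:2ahead}. In the other two cases $F'$ has exactly one more blue Flower than red with Left to move; if a red Flower remains Left cuts one down to reach a two-Flower advantage and wins by Lemma \ref{lem:2ahead}, while if none remains (only possible when $n=1$) Left wins by the ``blue Flower but no red Flower'' clause of Lemma \ref{lem:2ahead}.

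Next come the moves that preserve equality of the two Flower counts: cutting a green edge of a Stalk in $A$, and cutting a red edge in a Blossom without changing its Flower's colour (raising a blue value, or raising a red value while it stays negative). Such a move leaves every Flower height unchanged, so $F'$ still has a Flower of height at least $2$; moreover its weakest Flower is still red, since a Stalk cut does not touch the Flowers, weakening a red Flower keeps it red, and strengthening a blue Flower cannot dethrone the weakest red Flower. Hence, provided no canceling pair is created, $F'$ again satisfies the hypotheses of Theorem \ref{thm:1win}, and Left (to move) wins.

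The one remaining, and main, obstacle is a count-preserving Blossom cut that creates a canceling pair $(*_h{:}(v),\,*_h{:}(-v))$. The key observation is that the pre-existing weakest red Flower $W$ is unchanged by Right's move and survives as the \emph{strict} weakest Flower of $F'$, lying outside the new pair: the Flower Right did not cut has the same height $h$ and magnitude $v$ as the one it did, and since $F$ has no canceling pair with weakest Flower red, $W$ was already strictly weaker than that unchanged Flower, hence strictly weaker than both members of the pair. Deleting $B^{*}=*_h{:}(v)$ together with one red partner therefore yields a flowerbed $\tilde F$ with $n-1$ Flowers of each colour, weakest Flower $W$ still red, no canceling pair, and a Flower of height at least $2$ (if the pair were the only tall Flower then $h\ge2$, and $W$, being strictly weaker, has height at least $h$). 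By Theorem \ref{thm:1win}, Left wins $\tilde F$ moving first, and Left now plays that strategy on $\tilde F$ while answering every Right move inside the pair by the mirror move in its partner, exactly as in the proof of Theorem \ref{thm:*equal}. I expect this mirroring to be the crux: for stems of height $1$ it is literally Theorem \ref{thm:*equal} ($*{:}X+*{:}Y\equiv^- 0$ mod $\mathcal{D}$), but for $h\ge2$ cutting a stem leaves a pair of equal Stalks $*_k+*_k$, so one must argue by iterating the mirror that $*_h{:}X+*_h{:}(-X)$ still behaves like $0$ in the dicot universe; granting this generalization of Theorem \ref{thm:*equal}, we get $o^-(F')=o^-(\tilde F)$ and Left wins $F'$. (For $n=1$ no canceling pair can arise at all, since Right's moves only push the single red and blue values further apart, so this obstacle occurs only for $n\ge2$, where $\tilde F$ is a genuine flowerbed to which Theorem \ref{thm:1win} applies.)
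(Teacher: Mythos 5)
Your case analysis of Right's non-blue-cutting moves matches the paper's (much terser) proof on the easy cases: the count-changing moves are disposed of by Lemma \ref{lem:2ahead}, and the count-preserving moves that keep the hypotheses of Theorem \ref{thm:1win} intact are disposed of by that theorem. The paper's own proof consists of exactly those two observations and silently skips the case you isolate, in which Right's move inside a Blossom creates a canceling pair; you are right that this case genuinely arises for $n\geq 2$ (e.g.\ Right moving $*_2:(-2)$ to $*_2:(-1)$ next to an existing $*_2:(1)$) and that Theorem \ref{thm:1win} cannot then be applied directly, since it assumes no canceling pairs.

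The problem is that your resolution of that case rests on a claim that is false. You propose to grant that $*_h:X+*_h:(-X)\equiv^- 0$ (mod $\mathcal{D}$) for $h\geq 2$, as a generalization of Theorem \ref{thm:*equal}. Already $*_2+*_2$ refutes it: this is a mis\`ere $\mathcal{P}$-position (two nim-heaps of size $2$, nim-sum $0$, not all heaps of size $1$), while $0$ is a mis\`ere $\mathcal{N}$-position; likewise Proposition \ref{prop:n1,b=c} gives $o^-\bigl(*_2:(x)+*_2:(-x)\bigr)=\mathcal{P}\neq\mathcal{N}=o^-(0)$. The failure mode is visible in your own description of the mirror: answering a stem cut with the mirror cut deposits $*_k+*_k$ into the Stalks, which preserves the nim-sum but can convert an all-height-$1$ Stalks endgame into one containing a Stalk of height at least $2$, and that is exactly the distinction that flips mis\`ere outcomes (Lemma \ref{lem:nim}). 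So $o^-(F')=o^-(\tilde F)$ does not follow from mirroring, and the statement that trimming a canceling pair preserves mis\`ere outcomes is in fact the content of Section \ref{cancel}, which is proved only later and relies on the results of Section \ref{nocancel} (including this corollary), so invoking it here would also be circular. A correct treatment of this case must argue directly that Left wins $F'$; for instance, Left should not mirror a stem cut exactly but should be allowed to cut the partner to height $0$, $1$, or $k$, whichever wins, as the paper does throughout Section \ref{cancel}.
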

\begin{proof}
Suppose Right does not cut down a blue Flower. Then the resulting game is either a Flowerbed with weakest Generalized Flower red, and Left wins, or a Flowerbed with more blue Flowers than red Flowers, and Left wins by cutting down a red Flower.
\end{proof}

It remains to analyze the outcome when the player with the weakest Generalized Flower plays first. We proceed as follows. In Lemma \ref{lem:1w,>2}, Corollary \ref{cor:1w,>2}, we first restrict our attention to a class of Generalized Flowerbeds in which the winning player can win without reaching a game of Generalized Sprigs, and show that the winning player follows identical strategies under both play conventions until at most one flower is left. In Lemma  \ref{lem:1ww} and Proposition \ref{prop:1first}, we then show that all Generalized Flowerbeds with a Generalized Flower of height at least $2$ fall in this class. These results prove Theorem \ref{thm:main} for Generalized Flowerbeds without a pair of canceling Generalized Flowers.


\begin{lem}\label{lem:1w,>2}
Let $F \in \mathcal{F}$ be a Generalized Flowerbed with size $n \geq 2$ such that there is a Generalized Flower of height at least $2$ and there are no pairs of canceling Flowers. Suppose that either player has a winning strategy such that at each turn, there is either at least one Flower of height at least $2$, or at most one Flower. Then an optimal strategy for each player under both normal and mis\`ere play is to play as follows until the game is a Generalized Flowerbed of size $1$:
\begin{itemize}
\item Choose one of the opponent's Generalized Flowers, with the restriction that if the first player has the weakest Generalized Flower, then that Generalized Flower cannot be chosen. 
\item Cut it down so as to either maximize (first player) or minimize (second player) the nim-sum of the Stalks in the resulting game.
\end{itemize}
The best choice of Flower is the same under both play conventions, but there is no known method of efficiently determining which one it is.
\end{lem}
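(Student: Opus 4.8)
The plan is to prove this by induction on the size $n$, establishing simultaneously that $o^+(F) = o^-(F)$ and that in both conventions the winning player can win using only the described mutual attack of flower-cuts until size $1$ is reached. Without loss of generality I would assume the weakest Generalized Flower is red (otherwise interchange the roles of Left and Right, and of blue and red, throughout). The base case $n=1$ is furnished exactly by Propositions \ref{prop:n1,b<c} and \ref{prop:n1,b=c}, which establish $o^+(F) = o^-(F)$ for size-$1$ flowerbeds containing a Flower of height at least $2$; this anchors the induction, since the strategy description itself only concerns sizes $n \ge 2$.

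For the inductive step, the central observation is that every relevant move in the misère game is forced to agree with the corresponding normal-play move. First I would argue that the winning player's winning move must be to cut down one of the opponent's Generalized Flowers, leaving the weakest Flower untouched exactly as in Theorem \ref{thm:1win}. When the numbers of red and blue Flowers are equal, Corollary \ref{cor:onemore} and Corollary \ref{cor:2play} show that a winning response to a flower-cut is again a flower-cut: cutting a Stalk or a Blossom instead would either hand the opponent a two-Flower advantage, so that Lemma \ref{lem:2ahead} applies, or hand the opponent the weakest-Flower position of Theorem \ref{thm:1win}. Thus a flower-cut by the mover is answered by a flower-cut by the responder, and after this exchange the position is again a Generalized Flowerbed, now of size $n-1$; by the invariant assumed in the hypothesis it still contains a Flower of height at least $2$ (or has collapsed to at most one Flower), so the inductive hypothesis applies.

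The reason the optimal cut is to maximize (mover) or minimize (responder) the resulting Stalk nim-sum is inherited directly from the normal-play analysis of Section \ref{npGF}: once two Flowers have been selected for cutting, Lemmas \ref{lem:Nn1,b<c} and \ref{lem:Nn1,b=c}, equivalently the upper and lower nim-sum formulas of Lemma \ref{lem:updown}, show that the first cutter should maximize and the second minimize the Stalk nim-sum, and these criteria depend only on the heights and blossom values, which are identical data in both conventions. The key point that transfers the entire normal-play strategy to misère play is the height hypothesis: because a Flower of height at least $2$ persists until the game has at most one Flower, the eventual all-Stalks endgame always contains a Stalk of height at least $2$, so by Lemma \ref{lem:nim} its misère outcome coincides with its normal outcome, and the ``all-heaps-of-size-one'' exception that distinguishes misère nim never arises. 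Consequently the mover's normal-play winning flower-cut is also winning under misère play, with the same Flower chosen.

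The main obstacle is making the forcing of the second step fully rigorous: I must rule out that the losing player can profitably deviate by cutting in a Stalk or a Blossom rather than in a Flower, and I must verify that the winner can always maintain the height-at-least-$2$ invariant through each flower-exchange rather than being compelled to destroy the last tall Flower. Both are handled by combining Corollaries \ref{cor:onemore} and \ref{cor:2play} with the two-ahead ploy of Lemma \ref{lem:2ahead} to show that any such deviation either loses immediately or reduces to a position already covered by induction, but the bookkeeping across the four outcome classes and the two parities of the mover is where the care is required. Finally, the assertion that the chosen Flower is the same in both conventions follows because at each step the selection is dictated solely by the shared normal-play optimization, even though actually determining that optimum remains an unsolved and NP-hard problem.
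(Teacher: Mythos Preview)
Your approach is essentially the paper's: induct on $n$, use Corollaries \ref{cor:onemore} and \ref{cor:2play} together with Theorem \ref{thm:1win} to force alternating flower-cuts down to size $1$, then invoke Propositions \ref{prop:n1,b<c} and \ref{prop:n1,b=c} to see that the size-$1$ outcome (identical in both conventions) is monotone in the accumulated Stalk nim-sum, whence the max/min prescription. The paper adds one organizational simplification you omit: it observes that when the \emph{second} player owns the weakest Flower the claim is vacuous (Theorem \ref{thm:1win} already hands the first player a winning flower-cut), and so concentrates entirely on the case where the first player owns the weakest Flower.

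One point needs correction. Your justification that ``the eventual all-Stalks endgame always contains a Stalk of height at least $2$'' is not true as stated and is not needed. The hypothesis only guarantees a Flower of height $\ge 2$ while at least two Flowers remain; when that tall Flower is finally cut it may well become a Stalk of height $0$ or $1$, so Lemma \ref{lem:nim} cannot be invoked at the pure-Stalks stage. The paper sidesteps this by never descending to an all-Stalks game: it halts the induction at the size-$1$ Flowerbed and appeals directly to Propositions \ref{prop:n1,b<c} and \ref{prop:n1,b=c}, which already establish $o^+=o^-$ there and whose winning criteria ($a' ~\text{\sout{$\uparrow$}}~(c-1)\ge b$, respectively $a'\ge 2^{\alpha}$) are visibly increasing in the running nim-sum $a'$. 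Since you already set those propositions up as your base case, your argument stands once you drop the all-Stalks detour and draw the monotonicity conclusion straight from them.
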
 

\begin{proof}
We note that this is vacuously true when the second player has the weakest Flower, so assume that the first player has the weakest Flower.
We assume that Left plays first and prove the statement by induction on $n$. When $n \geq 2$, by Corollary \ref{cor:2play}, Left must first cut down a red Flower, and will still have the weakest Flower. Following this, by Corollary \ref{cor:onemore}, Right must cut down a blue Flower, and by Theorem \ref{thm:1win} must make sure that Left will still have the weakest Flower. Hence by induction, the players take turns cutting down Flowers until the game is a Generalized Flowerbed of size $1$.

Following this strategy of play, the game eventually reaches a Generalized Flowerbed of size $1$ with a blue Flower $*_{b_{\sigma(n)}}:(x_{\sigma(n)})$, a red Flower $*_{c_{\sigma(n)}}:(-y_{\sigma(n)})$ and Stalks with nim-sum $a' = a \oplus b'_{\sigma(1)} \oplus c'_{\tau(1)} \oplus \cdots \oplus b'_{\sigma(n-1)} \oplus c'_{\tau(n-1)}$, for $0 \leq b_i' < b_i$ and $0 \leq c_j' < c_j$ for all $i, j$, where $\sigma$ and $\tau$ give the order in which blue and red Generalized Flowers are cut down . If $b_{\sigma(n)}>c_{\tau(n)}$, by Proposition \ref{prop:n1,b<c}, Left wins if and only if $a' ~\text{\sout{$\uparrow$}}~ (c_{\tau(n)}-1) \geq b_{\sigma(n)}$. If $b_{\sigma(n)}=c_{\tau(n)}$ and $x_{\sigma(n)}<y_{\tau(n)}$, by Proposition \ref{prop:n1,b=c}, Left wins if and only if $a' \geq 2^{\alpha+1}$. Hence, in both cases, Left wants to maximize $a'$ and Right to minimize it. Since \sout{$\uparrow$} and \sout{$\downarrow$} are both increasing in the first variable, Left cuts every red Flower so as to maximize the nim-sum of the Stalks, and Right cuts every blue Flower so as to minimize it, as required. 
\end{proof}

\begin{cor}\label{cor:1w,>2}
Let $F \in \mathcal{F}$ be a Generalized Flowerbed with size $n \geq 2$ such that there is a Generalized Flower of height at least $2$ and there are no pairs of canceling Flowers.
Suppose that a player has a winning strategy where they can guarantee that at each turn, there is either at least one Flower of height at least $2$, or at most one Flower. Suppose also that Left plays first, and that the Generalized Flowers are cut down in the order $(b_{\sigma(1)}, \ldots, b_{\sigma(n)})$ and $(c_{\tau(1)}, \ldots, c_{\tau(n)})$ for some permutations $\sigma$ and $\tau$. Then Left wins if and only if 
\begin{eqnarray*}
&a ~\text{\sout{$\uparrow$}}~ (c_{\tau(1)}-1) ~\text{\sout{$\downarrow$}}~ (b_{\sigma(1)}-1) ~\text{\sout{$\uparrow$}}~ \cdots ~\text{\sout{$\downarrow$}}~ (b_{\sigma(n-1)}-1) ~\text{\sout{$\uparrow$}}~ (c_{\tau(n)}-1)
\geq b_{\sigma(n)} ~~ \text{if } b_{\sigma(n)} >c_{\tau(n)} \\
&a ~\text{\sout{$\uparrow$}}~ (c_{\tau(1)}-1) ~\text{\sout{$\downarrow$}}~ (b_{\sigma(1)}-1) ~\text{\sout{$\uparrow$}}~ \cdots ~\text{\sout{$\uparrow$}}~ (c_{\tau(n-1)}-1) ~\text{\sout{$\downarrow$}}~ (b_{\sigma(n-1)}-1)
 \geq 2^{\alpha} ~~~~~~~~~~~~~~~~~~~~\\
 &~~~~~~~~~~~~~~~~~~~~~~~~~~~~~~~~~~~~~~~~~~~~~~~~~~~~~~~~~~~~~~~~~~~~~~~~\text{if } b_{\sigma(n)} =c_{\tau(n)}, ~x_{\sigma(n)} < y_{\tau(n)} 
\end{eqnarray*} 
where the order of operations is from left to right, and $\alpha$ is the largest integer such that $2^{\alpha} | b_{\sigma(n)}$.
\end{cor}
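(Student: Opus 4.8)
The plan is to read the outcome straight off the optimal line of play supplied by Lemma~\ref{lem:1w,>2} and then apply the size-$1$ classifications of Propositions~\ref{prop:n1,b<c} and~\ref{prop:n1,b=c} to the resulting endgame. First I would record what Lemma~\ref{lem:1w,>2} gives under the present hypotheses: with Left moving first, at each of her turns Left cuts one of Right's red Flowers so as to maximize the nim-sum of the resulting Stalks, while at each of his turns Right cuts one of Left's blue Flowers so as to minimize it, and this is an optimal line for both players under either convention. Since cutting the red Flower $c_{\tau(k)}$ down to a Stalk of height $c'\in\{0,\dots,c_{\tau(k)}-1\}$ sends the current Stalk nim-sum $s$ to $s\oplus c'$, the largest value Left can force is $\max_{0\le c'\le c_{\tau(k)}-1} s\oplus c' = s~\text{\sout{$\uparrow$}}~(c_{\tau(k)}-1)$ by the definition of the upper nim-sum, and dually Right's optimal blue cut sends $s$ to $s~\text{\sout{$\downarrow$}}~(b_{\sigma(k)}-1)$.

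I would then iterate this over the $n-1$ rounds needed to shrink the Flowerbed to size $1$. Applying the cuts in the order $c_{\tau(1)},b_{\sigma(1)},c_{\tau(2)},\dots,c_{\tau(n-1)},b_{\sigma(n-1)}$ (Left, then Right, alternating) starting from $a$, the Stalk nim-sum becomes exactly
\[
a' = a~\text{\sout{$\uparrow$}}~(c_{\tau(1)}-1)~\text{\sout{$\downarrow$}}~(b_{\sigma(1)}-1)~\text{\sout{$\uparrow$}}~\cdots~\text{\sout{$\uparrow$}}~(c_{\tau(n-1)}-1)~\text{\sout{$\downarrow$}}~(b_{\sigma(n-1)}-1),
\]
read from left to right. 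After these $2(n-1)$ moves it is again Left's turn, and the position is the size-$1$ Flowerbed with blue Flower $*_{b_{\sigma(n)}}:(x_{\sigma(n)})$, red Flower $*_{c_{\tau(n)}}:(-y_{\tau(n)})$ and Stalks of nim-sum $a'$.

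Finally I would feed this endgame into the size-$1$ propositions, using that ``Left wins'' is equivalent to ``the position lies in $\mathcal{N}$'' because Left is to move. When $b_{\sigma(n)}>c_{\tau(n)}$ the surviving red Flower is the shorter one, so Proposition~\ref{prop:n1,b<c} (with the roles of the two colors interchanged) puts the position in $\mathcal{N}$ exactly when $a'~\text{\sout{$\uparrow$}}~(c_{\tau(n)}-1)\ge b_{\sigma(n)}$ and in $\mathcal{R}$ otherwise; substituting the expression for $a'$ gives the first displayed inequality. When $b_{\sigma(n)}=c_{\tau(n)}$ and $x_{\sigma(n)}<y_{\tau(n)}$, Proposition~\ref{prop:n1,b=c} in the subcase $x_1<y_1$ puts the position in $\mathcal{N}$ exactly when $a'\ge 2^{\alpha}$ and in $\mathcal{R}$ otherwise, which is the second displayed inequality.

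The step that will need the most care is the color-swapped use of Proposition~\ref{prop:n1,b<c}: that proposition is written for a blue Flower strictly shorter than a red one and returns outcomes in $\{\mathcal{N},\mathcal{L}\}$, whereas here the surviving blue Flower $b_{\sigma(n)}$ is the taller, so I must appeal to the Left--Right/blue--red symmetry to see that the relevant outcomes are $\mathcal{N}$ and $\mathcal{R}$ and that it is the maximizing cut of the shorter red Flower that produces the trailing $~\text{\sout{$\uparrow$}}~(c_{\tau(n)}-1)$ term. I would also verify the parity of the move count so that Left is genuinely the player to move in the size-$1$ endgame, and check that the hypothesis guaranteeing ``at least one Flower of height at least $2$, or at most one Flower'' at each turn keeps Propositions~\ref{prop:n1,b<c} and~\ref{prop:n1,b=c} applicable throughout the line of play (in particular that $b_{\sigma(n)}=c_{\tau(n)}\ge 2$ in the second case, as required for $\alpha$ to be defined).
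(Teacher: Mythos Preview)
Your proposal is correct and follows the same approach as the paper: the paper's proof is a single sentence citing Lemma~\ref{lem:1w,>2} and the definitions of \sout{$\uparrow$} and \sout{$\downarrow$}, and your write-up simply unpacks that sentence, making explicit the translation of ``maximize/minimize the resulting Stalk nim-sum'' into the upper/lower nim-sum operations and then invoking the size-$1$ classifications (Propositions~\ref{prop:n1,b<c} and~\ref{prop:n1,b=c}) already used inside the proof of Lemma~\ref{lem:1w,>2}. Your care with the color-swap in Proposition~\ref{prop:n1,b<c} and the parity check is appropriate but not a departure from the paper's argument.
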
 
\begin{proof}
This follows from Lemma \ref{lem:1w,>2} and the definitions of \sout{$\uparrow$} and \sout{$\downarrow$}.
\end{proof}

\begin{lem}\label{lem:1ww}
Let $F \in \mathcal{F}$ be a Generalized Flowerbed of size $n$ such that no pair of Flowers cancels. Suppose that the first player has the weakest Generalized Flower. Suppose also that the losing player has a Generalized Flower of height at least $2$. Then the winning player has a winning strategy such that at each turn, there is either at least one Flower of height at least $2$, or at most one Flower.
\end{lem}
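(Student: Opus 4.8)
The plan is to induct on the size $n$ and to reduce everything to the single-Flower analysis already carried out in Propositions \ref{prop:n1,b<c} and \ref{prop:n1,b=c}. The guiding observation I would use is that the invariant is weaker than it looks. A Flower retains its height as long as it remains a Flower: cutting inside its Blossom leaves the height unchanged, and the only way to lower its height is to cut its stem, which replaces it by a strictly shorter Stalk. Consequently ``at each turn there is a Flower of height at least $2$ or at most one Flower'' can fail only at a position with at least two Flowers in which every Flower of height at least $2$ has already been cut down. It therefore suffices to produce, for the winning player, a winning strategy under which one fixed Flower of height at least $2$ survives until at most one Flower remains. The Flower of height at least $2$ guaranteed to the \emph{losing} player by hypothesis is the natural candidate, and the reason to protect that one (rather than one of the winner's) is that it is doubly shielded: the losing player's Flowers are cut only by the winning player, who is free to defer them, while the losing player, as the forced structure will show, is compelled to spend his moves elsewhere.

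The forced structure comes from Section \ref{unequal}. Assume without loss of generality that Left moves first and owns the weakest, hence tallest, Flower. By Theorem \ref{thm:1win} and Corollary \ref{cor:2play} (up to interchanging colours), in any balanced Flowerbed with a Flower of height at least $2$ and no canceling pair, the player to move, if he wins at all, wins by cutting down one of his opponent's Flowers, creating a one-Flower majority; Corollary \ref{cor:onemore} then forces the player now behind to restore parity by cutting one of the winner's Flowers, since any other reply leaves him two Flowers behind and loses to the two-ahead ploy of Lemma \ref{lem:2ahead}. Thus, along optimal play, the game proceeds in rounds, each round deleting one blue and one red Flower and returning the move to the winner, and the losing player never touches his own guaranteed tall Flower. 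I would have the winner, on each of his turns, cut down one of the loser's \emph{other} Flowers, deferring the tall one; since the loser has exactly $n-1$ such other Flowers, a direct count shows his tall Flower is still standing precisely when the position first becomes a Flowerbed of size $1$. That is the base case, settled by Propositions \ref{prop:n1,b<c} and \ref{prop:n1,b=c}, where the invariant holds automatically because a tall Flower is present among the two remaining Flowers and, by the monotonicity above, cannot be replaced by two short Flowers.

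The main obstacle is to reconcile ``deferring the tall Flower'' with ``winning,'' and to check the hypotheses persist so the induction closes. Winning is not lost by the deferral because the loser's replies are pinned down by Corollary \ref{cor:onemore}, so sparing one particular opponent Flower cannot cost the winner the game; persistence requires only noting that after a full round the mover is again the winner, that the no-canceling condition is inherited by a subset of the original Flowers, and that the loser still owns his tall Flower, which was never cut. The genuinely delicate point, and the step I expect to be hardest, is the losing player's \emph{deviation} from the forced reply --- cutting his own tall Flower, or otherwise falling two Flowers behind. Here I would switch to the two-ahead ploy of Lemma \ref{lem:2ahead}: the winner finishes by cutting the loser's remaining Flowers down to Stalks, and since a Flower of height at least $2$ may be cut to a Stalk of height at least $2$ and Stalks are themselves Generalized Flowers, the winner can retain a height-$\geq 2$ component in exactly those deviations that leave such a component available, while the rest collapse the position to at most one Flower or to the pure-Sprig regime already resolved in Section \ref{Sprigs}. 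Verifying that no sequence of deviations ever strands the winner with two or more Flowers all of height $1$ is the crux of the argument.
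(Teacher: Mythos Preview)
Your plan has the right architecture --- induct on $n$, force both players into a round structure via Corollary \ref{cor:onemore} and Corollary \ref{cor:2play}, and have the winner spare the loser's tall Flower --- but the crucial step is asserted rather than proved. You write that ``sparing one particular opponent Flower cannot cost the winner the game'' because ``the loser's replies are pinned down by Corollary \ref{cor:onemore}.'' That corollary only restricts \emph{which colour} the loser cuts; it says nothing about which Flower or to what height. The winner's choice of which opposing Flower to fell, and the height it is cut to, changes the running nim-sum of the Stalks, and by Propositions \ref{prop:n1,b<c} and \ref{prop:n1,b=c} that nim-sum is exactly what decides the $n=1$ endgame. So it is entirely possible \emph{a priori} that the winner's only winning first move is to cut the tall Flower, and deferring it loses. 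This is not a technicality: it is the whole content of the lemma.

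The paper confronts this head-on. When Left wins and there is exactly one red Flower of height $\geq 2$, it \emph{assumes for contradiction} that Left can win only by cutting that tall red Flower first, translates both ``Left wins after cutting the tall one'' and ``Left loses after cutting a Sprig'' into inequalities on iterated upper/lower nim-sums via Corollary \ref{cor:1w,>2}, and derives a contradiction using Lemma \ref{lem:mixUD} and the order-independence result Lemma \ref{lem:down}. The Right-wins case needs a separate argument: Left may strip all red Flowers to Sprigs, and the paper shows (using the explicit Sprigs classification, Theorem \ref{thm:Generalized Sprigs}) that if Right's only apparent win is to cut the lone tall blue Flower, he can in fact rearrange to cut blue Sprigs first and save the tall one for last. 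Your proposal contains no analogue of either computation, and the ``deviation'' paragraph you flag as the crux is in fact not where the difficulty lies --- the difficulty is already present along the main line, in justifying that the deferral is compatible with winning at all.
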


Before proving this lemma, we present some technical lemmas about the upper and lower nim-sum that will be used in the proof.

\begin{lem}[Berlekamp, \cite{notebook}]\label{lem:mixUD}
For any $a, b, c$ non-negative integers,
\begin{eqnarray*}
(a ~\text{\sout{$\uparrow$}}~ b) ~\text{\sout{$\downarrow$}}~ c \leq (a ~\text{\sout{$\downarrow$}}~ c) ~\text{\sout{$\uparrow$}}~ b.
\end{eqnarray*}
\end{lem}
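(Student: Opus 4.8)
The plan is to bypass the bitwise formula of Lemma \ref{lem:updown} and argue directly from the defining max/min characterizations of the two nim-sums, producing a single ordinary nim-sum that is at once an upper bound for the left-hand side and a lower bound for the right-hand side.

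First I would fix a witness for each of the two inner operations: let $\hat{b}$ satisfy $0 \le \hat{b} \le b$ and $a ~\text{\sout{$\uparrow$}}~ b = a \oplus \hat{b}$, and let $\hat{c}$ satisfy $0 \le \hat{c} \le c$ and $a ~\text{\sout{$\downarrow$}}~ c = a \oplus \hat{c}$; such witnesses exist because the upper and lower nim-sums are a maximum and a minimum over the finite ranges $0 \le b' \le b$ and $0 \le c' \le c$. The quantity that will be sandwiched between the two sides of the inequality is the plain nim-sum $a \oplus \hat{b} \oplus \hat{c}$.

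The two bounding steps are then immediate from feasibility. For the left-hand side, $(a ~\text{\sout{$\uparrow$}}~ b) ~\text{\sout{$\downarrow$}}~ c = \min_{0 \le c' \le c} (a ~\text{\sout{$\uparrow$}}~ b) \oplus c'$, and since $\hat{c}$ lies in the feasible range of this minimum, the left-hand side is at most $(a ~\text{\sout{$\uparrow$}}~ b) \oplus \hat{c} = a \oplus \hat{b} \oplus \hat{c}$. Symmetrically, $(a ~\text{\sout{$\downarrow$}}~ c) ~\text{\sout{$\uparrow$}}~ b = \max_{0 \le b' \le b} (a ~\text{\sout{$\downarrow$}}~ c) \oplus b'$, and since $\hat{b}$ lies in the feasible range of this maximum, the right-hand side is at least $(a ~\text{\sout{$\downarrow$}}~ c) \oplus \hat{b} = a \oplus \hat{c} \oplus \hat{b}$. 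Because bitwise xor is commutative, these two bounds are the same integer, so $(a ~\text{\sout{$\uparrow$}}~ b) ~\text{\sout{$\downarrow$}}~ c \le a \oplus \hat{b} \oplus \hat{c} \le (a ~\text{\sout{$\downarrow$}}~ c) ~\text{\sout{$\uparrow$}}~ b$, which is exactly the claim.

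I do not expect a genuine obstacle here. The one subtlety worth stating carefully is the order of operations: each composite expression is evaluated inner-first, so $\hat{b}$ and $\hat{c}$ are witnesses for the \emph{inner} upper and lower nim-sums, while the freedom in the \emph{outer} min/max is precisely what lets those same witnesses be re-used as feasible (not necessarily optimal) choices. The explicit binary description of Lemma \ref{lem:updown} is not needed for this argument, though it offers an alternative, purely computational route should one prefer to verify the inequality bit by bit.
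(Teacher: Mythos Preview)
Your proof is correct. The paper does not actually give its own proof of this lemma; it is stated without proof and attributed to Berlekamp's notebook. Your argument---choosing witnesses $\hat b$ and $\hat c$ for the inner operations and observing that they are feasible (though not necessarily optimal) choices for the outer min and max---is clean and fully self-contained, relying only on the defining max/min characterizations of the upper and lower nim-sums and the commutativity of $\oplus$. There is nothing to add or correct.
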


\begin{lem}\label{lem:down}
Let $a, b_1, \ldots, b_n$ be non-negative integers. Then $a ~\text{\sout{$\downarrow$}}~ b_{\sigma(1)}  ~\text{\sout{$\downarrow$}}~ \cdots ~\text{\sout{$\downarrow$}}~ b_{\sigma(n)} >0$ for some permutation $\sigma$ if and only if $a ~\text{\sout{$\downarrow$}}~ b_{\tau(1)}  ~\text{\sout{$\downarrow$}}~ \cdots ~\text{\sout{$\downarrow$}}~ b_{\tau(n)} >0$ for every permutation $\tau$. 
\end{lem}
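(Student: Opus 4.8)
The plan is to prove that the positivity of a left-to-right chain of lower nim-sums $a \downarrow b_{\sigma(1)} \downarrow \cdots \downarrow b_{\sigma(n)}$ is independent of the ordering $\sigma$. I would first try to show that the value of the iterated lower nim-sum is itself permutation-invariant, but this is almost certainly false in general (lower nim-sum is not associative or commutative in the way ordinary nim-sum is, since $\downarrow$ is decreasing in its second argument). So the correct target is the weaker statement in the lemma: only the \emph{positivity} (i.e., being nonzero versus zero) is order-independent. The natural strategy is to characterize exactly when such a chain evaluates to $0$, in a way that is manifestly symmetric in $b_1, \dots, b_n$.

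First I would invoke Lemma \ref{lem:updown} to get an explicit bit-level description of a single $\downarrow$ operation: recall that $a \downarrow b$ discards all bits of $a$ at or below the top bit where $a$ has a $0$ and $b$ has a $1$, keeping $a \oplus b$ (equivalently just the surviving bits of $a$, since $b$'s high bits get xored in) above that threshold. The key qualitative fact is that $a \downarrow b$ can only \emph{lose} high-order information from $a$ relative to $a \oplus b$, and in particular $a \downarrow b = 0$ precisely when every set bit of $a$ lies at or below some bit where $b$ dominates. I would make precise the intuition that taking $\downarrow b_i$ repeatedly can only erode $a$ downward, and that the chain reaches $0$ if and only if the $b_i$ can collectively ``cover'' all the bits of $a$ from the top down. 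Lemma \ref{lem:mixUD} is a hint that these operations have controlled commutation behaviour, and I expect an analogous pure-$\downarrow$ swap inequality, namely $a \downarrow b \downarrow c$ and $a \downarrow c \downarrow b$ are comparable or at least simultaneously zero, to be the workhorse.

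Concretely, the cleanest route is to prove the two-element swap case first: $a \downarrow b \downarrow c > 0$ if and only if $a \downarrow c \downarrow b > 0$. Using the binary formula from Lemma \ref{lem:updown} one analyzes the highest bit at which the various quantities disagree; I would argue by cases on the relative positions of the leading bits of $a$, $b$, $c$ to show that if one composition survives to a positive value then so does the other. Once the adjacent-transposition case is established, the general statement follows immediately, since any permutation is a product of adjacent transpositions and positivity is preserved under each swap; this gives ``for some $\sigma$'' equals ``for all $\sigma$'' by connecting any two orderings through a sequence of swaps.

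The hard part will be the two-element swap lemma, because $\downarrow$ is genuinely asymmetric and neither associative nor commutative, so I cannot simply rearrange. The delicate point is that $a \downarrow b \downarrow c = 0$ must be shown equivalent to a symmetric condition on $\{b, c\}$ (essentially: the set bits of $a$ are all dominated, in the threshold sense, by the combined $0$-over-$1$ patterns of $b$ and $c$), and extracting this cleanly from the piecewise formula of Lemma \ref{lem:updown} requires careful bookkeeping of which bits are zeroed at each stage. I would look for a single invariant — for instance, the top bit of $a$ that is \emph{not} covered by any $b_i$ — that is preserved by the whole chain and detects positivity; if such an invariant can be identified, it renders the permutation-independence transparent and sidesteps the case analysis entirely.
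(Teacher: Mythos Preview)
Your transposition strategy has a genuine gap. Proving the two-element swap $a \,\text{\sout{$\downarrow$}}\, b \,\text{\sout{$\downarrow$}}\, c > 0 \iff a \,\text{\sout{$\downarrow$}}\, c \,\text{\sout{$\downarrow$}}\, b > 0$ does \emph{not} propagate to longer chains in the way you claim. To swap $b_i$ and $b_{i+1}$ inside $a \,\text{\sout{$\downarrow$}}\, b_1 \,\text{\sout{$\downarrow$}}\, \cdots \,\text{\sout{$\downarrow$}}\, b_n$, you would set $a' = a \,\text{\sout{$\downarrow$}}\, b_1 \,\text{\sout{$\downarrow$}}\, \cdots \,\text{\sout{$\downarrow$}}\, b_{i-1}$ and apply your lemma to $a' \,\text{\sout{$\downarrow$}}\, b_i \,\text{\sout{$\downarrow$}}\, b_{i+1}$ versus $a' \,\text{\sout{$\downarrow$}}\, b_{i+1} \,\text{\sout{$\downarrow$}}\, b_i$. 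Your lemma says these are simultaneously zero or simultaneously positive, but if both are positive with \emph{different values}, the remaining tail $\,\text{\sout{$\downarrow$}}\, b_{i+2} \,\text{\sout{$\downarrow$}}\, \cdots \,\text{\sout{$\downarrow$}}\, b_n$ can kill one and not the other. Positivity equivalence at the two-element level is therefore not enough; you would need the stronger statement that the \emph{values} agree after a swap. (That stronger equality may in fact hold, and would rescue the argument, but it is not what you set out to prove and would itself need justification.)

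Your final paragraph is the right instinct, and it is precisely the route the paper takes: rather than swapping, it exhibits a permutation-symmetric bitwise criterion for positivity. Concretely, with $(x)_k$ denoting the $k$th binary digit, the paper sets $\alpha$ to be the highest bit where $(a)_\alpha = 1$ and $(b_i)_\alpha = 0$ for every $i$, and $\beta$ to be the highest bit where the number of $i$ with $(b_i)_\beta = 1$ strictly exceeds $(a)_\beta$ (equivalently: at least one $b_i$ has a $1$ there if $a$ has a $0$, at least two if $a$ has a $1$). A direct computation from Lemma~\ref{lem:updown} shows the chain is positive iff $\alpha > \beta$, a condition visibly independent of the ordering of the $b_i$. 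Your proposed invariant, ``the top bit of $a$ not covered by any $b_i$'', is exactly $\alpha$; the missing ingredient is the companion quantity $\beta$, which tracks where the $b_i$ can collectively cancel a surviving bit.
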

\begin{proof}
Let $(a)_n$ denote the $n$th digit in the binary representation of $a$. Let $\alpha$ be the largest integer such that $(a)_{\alpha}=1$ and $(b_i)_{\alpha}=0$ for all $i$. Let $\beta$ be the largest integer such that if $(a)_{\beta}=0$, there is at least one $i$ such that $(b_i)_{\beta}=1$, and if $(a)_{\beta}=1$, there are at least two $i$ such that $(b_i)_{\beta}=1$. 

It follows from Lemma \ref{lem:updown} that $a ~\text{\sout{$\downarrow$}}~ b_{\sigma(1)}  ~\text{\sout{$\downarrow$}}~ \cdots ~\text{\sout{$\downarrow$}}~ b_{\sigma(n)} >0$ if and only if $\alpha > \beta$, which is independent of the order of the $b_i$. We omit the calculations here.
\end{proof}

\begin{proof}[Proof of Lemma \ref{lem:1ww}] We assume without loss of generality that Left moves first and prove the result separately for Left winning and Right winning by induction on $n$.

Suppose Left wins. If $n=1$, the result is trivial. For $n\geq 2$, consider the following cases.
\begin{itemize}
\item\textit{Case 1: There exists at least two red Generalized Flowers of height at least $2$}\\
Then by Lemma \ref{lem:1w,>2}, after both players move, we have $n-1$ Generalized Flowers of each color, Left has the weaker weakest Generalized Flower, and there is a red Generalized Flower height at least $2$, so by induction we are done.
\item\textit{Case 2: There is exactly one red Generalized Flower of height at least $2$, i.e. for some $c_1 \geq 2$,}
\begin{eqnarray*}
F = \sum_{i=1}^n *_{b_i}:(x_i) + \sum_{j=2}^n *:(-y_j) + *_{c_1}:(y_1)+ a.
\end{eqnarray*}
Suppose for the sake of contradiction that Left can only win if she cuts the stem of the red Generalized Flower with height at least $2$. This means that there exists some $c_1' < c_1$ such that for all $k$ and $b_k'<b_k$
\begin{eqnarray*}
F_k = \sum_{i \neq k}*_{b_i}:(x_i) + \sum_{j=2}^n *:(y_j) + a \oplus c_1' \oplus b_k' ~~~\in ~~~\mathcal{N}^- \cup \mathcal{L}^-.
\end{eqnarray*}
In particular, Left wins if Right plays so as to leave a Generalized Flower of height $b_i \geq 2$ until last. In such a situation, by Lemma \ref{lem:1w,>2} the players take turns cutting Generalized Flowers until they reach some position $F'$ with one Generalized Flower of each color:
\begin{eqnarray*}
F' = *_{b_r}:(x_r) + *:(-y_s)+a'.
\end{eqnarray*}
Since Left wins, by Proposition \ref{prop:n1,b<c} we have $a' \geq b_r$, or equivalently $a' ~\text{\sout{$\downarrow$}}~ (b_r-1) > 0$. Thus by Corollary \ref{cor:1w,>2}, for any permutation $\sigma$ with $b_{\sigma(n)}\geq 2$, we have
\begin{eqnarray}\label{eqn:down}
0<a' ~\text{\sout{$\downarrow$}}~ (b_r-1) = a ~\text{\sout{$\uparrow$}}~ (c_1-1) ~\text{\sout{$\downarrow$}}~ (b_{\sigma(1)}-1) ~\text{\sout{$\downarrow$}}~ \cdots ~\text{\sout{$\downarrow$}}~ (b_{\sigma(n)}-1).
\end{eqnarray}
Moreover, by assumption Left loses if she cuts down a red Generalized Flower with height $1$ first, so in particular if she leaves the weakest red Generalized Flower until last, she loses. This means that there exists a permutation $\sigma$ with $b_{\sigma(n)} \geq c_1 \geq 2$, such that
\begin{eqnarray*}
a ~\text{\sout{$\downarrow$}}~ (b_{\sigma(1)}-1) ~\text{\sout{$\downarrow$}}~ (b_{\sigma(2)}-1) ~\text{\sout{$\downarrow$}}~ \cdots ~\text{\sout{$\downarrow$}}~ (b_{\sigma(n-1)}-1) ~\text{\sout{$\uparrow$}}~ (c_1-1)
 \leq b_{\sigma(n)}-1 \\
\text{or } ~ a ~\text{\sout{$\downarrow$}}~ (b_{\sigma(1)}-1) ~\text{\sout{$\downarrow$}}~ (b_{\sigma(2)}-1) ~\text{\sout{$\downarrow$}}~ \cdots ~\text{\sout{$\downarrow$}}~ (b_{\sigma(n-1)}-1) ~\text{\sout{$\uparrow$}}~ (c_1-1)
\leq 2^{\alpha+1}-1 \leq b_{\sigma(n)}-1
\end{eqnarray*}
so in both cases we have
\begin{eqnarray}
 a ~\text{\sout{$\downarrow$}}~ (b_{\sigma(1)}-1) ~\text{\sout{$\downarrow$}}~ \cdots ~\text{\sout{$\downarrow$}}~ (b_{\sigma(n-1)}-1) ~\text{\sout{$\uparrow$}}~ (c_1-1) ~\text{\sout{$\downarrow$}}~ (b_{\sigma(n-1)}-1)= 0 ,
\end{eqnarray}
which, by repeated application of Lemma \ref{lem:mixUD}, contradicts (\ref{eqn:down}). Hence the original assumption was false and Left can win by cutting the stem of a red Generalized Flower with height $1$, and by induction we are done.

\end{itemize}

Suppose Right wins. If $n=1$, the result is trivial. For $n \geq 2$, consider the following cases.
\begin{itemize}
\item \textit{Case 1: Left leaves a red Generalized Flower of height at least $2$}\\
Then as Right must leave a weaker blue Generalized Flower, he also leaves a blue Generalized Flower of height at least $2$ and by induction we are done.

\item \textit{Case 2: Left leaves red Generalized Flowers all of height $1$ and Stalks nim-sum $a$}\\
If Right leaves a Generalized Flower of height at least $2$, by induction we are done. So suppose that Right cannot leave a Generalized Flower of height at least $2$. Since originally there was a Generalized Flower of height at least $2$, and the weakest Generalized Flower is blue, this means that this is the only blue Generalized Flower with height $b \geq 2$ and Right must cut it to win.

\indent Hence, for some $b' < b$, second player Right wins a Generalized Sprigs game with $\Delta(G)=0$, $\epsilon(G) \neq 0$, and Stalk nim-sum $a \oplus b'$. So, by Theorem \ref{thm:Generalized Sprigs} we must have $a \oplus b' = 0$ and some Stalk of height at least $2$, or $a \oplus b' = 1$ and there is exactly one Stalk of height exactly $1$. 

We now show in this case that Right could have won by leaving the blue Generalized Flower of height $b$. His winning strategy is to cut down all the blue Generalized Sprigs first. Since the weakest Flower is blue, the players alternate cutting down Flowers until there is one blue Flower, height $b$, some Stalks, and it is Right's turn. Moreover, since all the Flowers that were cut down were Sprigs, the Stalk configuration is the same as after Left's first move. But we have some $b'$ for which $a \oplus b' = 0$ if there is some Stalk of height at least $2$, or $a \oplus b' = 1$ if there is exactly one Stalk of height exactly $1$, so Right can win by cutting the blue Flower to a Stalk of height $b'$. Hence Right can always leave a blue Generalized Flower of height at least $2$ and by induction we are done.
\end{itemize}
\end{proof}

\begin{prop}\label{prop:1first}
Let $F \in \mathcal{F}$ be a Generalized Flowerbed of size $n$ such that no pair of Flowers cancels and there is a Generalized Flower of height at least $2$. Suppose that the first player has the weakest Generalized Flower. Then under both play conventions the winning player can win by first cutting down, in any order, all of their opponent's Generalized Flowers with height $1$. 
\end{prop}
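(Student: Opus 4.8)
The plan is to induct on the number of height-$1$ Generalized Flowers belonging to the loser, reducing in the base case to the situation already handled by Lemma~\ref{lem:1ww} together with Lemma~\ref{lem:1w,>2} and Corollary~\ref{cor:1w,>2}. Two preliminary observations organize the argument. First, since $F$ contains a Flower of height at least $2$ and the weakest Flower is by definition the tallest one, the weakest Flower always has height at least $2$; in particular no height-$1$ Flower is ever the weakest, so cutting down the opponent's height-$1$ Flowers is always consistent with the restriction in Lemma~\ref{lem:1w,>2} that the weakest Flower be left untouched. Second, a Flower of height $1$ has a single stem edge and so can only be cut down to a Stalk of height $0$; because $m~\text{\sout{$\uparrow$}}~0 = m~\text{\sout{$\downarrow$}}~0 = m$, such a cut leaves every running nim-sum appearing in Corollary~\ref{cor:1w,>2} unchanged. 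Combined with Lemma~\ref{lem:down}, this gives that the order in which these Flowers are removed is irrelevant, which is precisely the ``in any order'' clause of the statement.

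Writing $W$ for the winning player, the induction runs as follows. In the base case $W$'s opponent has no height-$1$ Flower, so all of the opponent's Flowers have height at least $2$; then Lemma~\ref{lem:1ww} furnishes a winning strategy for $W$ in which at each turn there is a Flower of height at least $2$ (or at most one Flower), and Lemma~\ref{lem:1w,>2} with Corollary~\ref{cor:1w,>2} identify this strategy and confirm that it wins under both conventions. For the inductive step, $W$ cuts down one of the opponent's height-$1$ Flowers. This move leaves the Stalk nim-sum unchanged and puts $W$ one Flower ahead; if the opponent does not reply by cutting one of $W$'s Flowers, then on the next turn $W$ attains a two-Flower advantage and wins outright by Lemma~\ref{lem:2ahead} (cf.\ Corollary~\ref{cor:onemore}), while if the opponent does cut one of $W$'s Flowers the position becomes a strictly smaller Generalized Flowerbed. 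I would then check that this smaller position still has no canceling pair, still has a tallest (hence height-$\geq 2$) Flower held by the first player, and remains a win for $W$, so that the inductive hypothesis applies and $W$ continues to remove the opponent's remaining height-$1$ Flowers.

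The step I expect to be the main obstacle is verifying that cutting the opponent's height-$1$ Flower is genuinely a winning move, i.e.\ that the winner is preserved across the exchange above. The clean way to see this is through the explicit nim-sum criterion of Corollary~\ref{cor:1w,>2}: inserting $W$'s $~\text{\sout{$\uparrow$}}~0$ (or $~\text{\sout{$\downarrow$}}~0$) step does not alter the accumulated value, and the opponent's reply is one of the very $\max/\min$ operations already ranged over in that criterion, so the inequality deciding the outcome is unaffected; Lemma~\ref{lem:mixUD} and Lemma~\ref{lem:down} supply the commutation and order-independence needed to make this precise. A secondary delicate point is the boundary case in which the loser holds only height-$1$ Flowers, so that Lemma~\ref{lem:1ww} cannot be invoked at the outset. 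Here the unique tallest Flower belongs to $W$, which forces $W$ to be the first player; one then argues that $W$ keeps this height-$\geq 2$ Flower throughout---either because the restriction in Lemma~\ref{lem:1w,>2} forbids the opponent from cutting it, or, should the opponent attack it anyway, because the residual all-height-$1$ position is a game of Generalized Sprigs whose outcome is pinned down by Theorem~\ref{thm:Generalized Sprigs} and still favours $W$.
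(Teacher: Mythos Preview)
Your overall plan is close in spirit to the paper's, but the organization and two specific steps differ in ways that matter.

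The paper does not induct on the number of height-$1$ Flowers. Instead it splits directly into two cases. In Case~1 (the loser has at least one Flower of height $\geq 2$), Lemma~\ref{lem:1ww} applies, so the entire game is governed by the single min--max expression in Corollary~\ref{cor:1w,>2}; then Lemma~\ref{lem:mixUD} together with monotonicity of $\text{\sout{$\uparrow$}}$ and $\text{\sout{$\downarrow$}}$ in the first variable shows in one stroke that each player may push all of their $\text{\sout{$\uparrow$}}~0$ (resp.\ $\text{\sout{$\downarrow$}}~0$) steps to the front without worsening the expression. This sidesteps the intermediate checks your induction requires---in particular, you never have to verify that after a single exchange the first player still holds the weakest Flower, a condition that can genuinely fail (the loser is under no obligation to follow the optimal cut in Lemma~\ref{lem:1w,>2}, so your clause ``the restriction in Lemma~\ref{lem:1w,>2} forbids the opponent from cutting it'' is a misreading of that lemma).

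The real gap is your boundary case, where the loser's Flowers are all of height $1$. Here Lemma~\ref{lem:1ww} is unavailable, and appealing to Theorem~\ref{thm:Generalized Sprigs} with the phrase ``still favours $W$'' is exactly the assertion that needs proof. The paper handles this by an explicit nim-sum computation: since $W$ (necessarily the first player) wins, the criterion of Corollary~\ref{cor:1w,>2} forces, for every $\sigma$ with $b_{\sigma(n)}\ge 2$,
\[
a~\text{\sout{$\downarrow$}}~(b_{\sigma(1)}-1)~\text{\sout{$\downarrow$}}~\cdots~\text{\sout{$\downarrow$}}~(b_{\sigma(n-1)}-1)\ \ge\ b_{\sigma(n)}-1,
\]
which yields both $a\ge 2$ (so a Stalk of height $\ge 2$ survives, needed for mis\`ere) and positivity of the full $\text{\sout{$\downarrow$}}$-chain, hence a nonzero terminal nim-sum no matter how Right cuts. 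Your sketch does not produce either conclusion, and without $a\ge 2$ the mis\`ere half of the statement is unsupported.
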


\begin{proof}
Assume without loss of generality that Left wins.

If there is a red Flower of height at least $2$, then by Lemma \ref{lem:1ww} Left can guarantee that at each turn, there is either at least one Flower of height at least $2$, or at most one Flower. Thus by Corollary \ref{cor:1w,>2}, the first player plays to maximize the nim-sum of the Stalks, and the second player to minimize the nim-sum. We note in particular that cutting down Generalized Flowers of height $1$ does not change the nim-sum of the Stalks. Hence, by Lemma \ref{lem:mixUD} and the fact that the upper and lower nim-sum are increasing in the first variable, we see that given any order in which the second player cuts down Generalized Flowers, the first player can maximize the nim-sum by first cutting down their opponent's Generalized Flowers of height $1$. Similarly, the second player can minimize the nim-sum by first cutting down their opponent's Generalized Flowers of height $1$. Hence Left can win by first cutting down all the red Generalized Flowers of height $1$.

If there is no red Flower of height at least $2$, the red Flowers are all of height $1$ and 
we want to show that Left can win by continually cutting down Flowers. As there is a Flower of height at least $2$, there is a blue Flower of height at least $2$. Thus Left plays first and Right plays second. Since Right cannot win by leaving a blue Flower of height at least $2$ until last, we must have, for any permutation $\sigma$ with $b_{\sigma(n)} \geq 2$, 
\begin{eqnarray*}
a ~\text{\sout{$\downarrow$}}~ (b_{\sigma(1)}-1) ~\text{\sout{$\downarrow$}}~ \cdots ~\text{\sout{$\downarrow$}}~ (b_{\sigma(n-1)}-1) \geq (b_{\sigma(n)}-1) \geq 2.
\end{eqnarray*}
This tells us that $a \geq 2$, so that there is a Stalk of height at least $2$, and
\begin{eqnarray*}
a ~\text{\sout{$\downarrow$}}~ (b_{\sigma(1)}-1) ~\text{\sout{$\downarrow$}}~ \cdots ~\text{\sout{$\downarrow$}}~ (b_{\sigma(n-1)}-1) ~\text{\sout{$\downarrow$}}~  (b_{\sigma(n)}-1) > 0.
\end{eqnarray*}
Therefore Left can win by continually cutting down red Flowers. 
\end{proof}

We are now ready to prove that Theorem \ref{thm:main} holds for Generalized Flowerbeds without canceling pairs that are not Sprig games.\\

\begin{thm}\label{thm:o+o-}
Let $F \in \mathcal{F}$ be a Generalized Flowerbed of size $n$ such that no pair of Generalized Flowers cancels and there is a Flower of height at least $2$. Then $o^+(F)=o^-(F)$. Moreover, if $n \geq 2$, then the winning player can make their next move in such a way that allows him to win under both normal and mis\`ere play.
\end{thm}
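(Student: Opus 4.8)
The plan is to prove Theorem~\ref{thm:o+o-} by assembling the results established earlier in this section, treating the cases $n=1$ and $n \geq 2$ separately. For $n=1$, the equality $o^+(F)=o^-(F)$ is exactly the content of Propositions~\ref{prop:n1,b<c} and~\ref{prop:n1,b=c}, which explicitly compute both outcome functions and exhibit them as identical; so this case is immediate. Note that, since the Generalized Flowerbed has equal numbers of red and blue Flowers and no canceling pair, we have $F^*=F$, so proving $o^+(F)=o^-(F)$ is precisely what Theorem~\ref{thm:main} requires for these positions. Thus the real work lies in the case $n \geq 2$.

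For $n \geq 2$, I would argue by showing that the winning player has a single move that simultaneously wins under both conventions, and then induct. Without loss of generality assume Left wins and consider which player holds the weakest Generalized Flower. If the weakest Flower is red (so Left holds the weakly stronger Flower and plays as the ``second player'' in the language of the support structure), then Theorem~\ref{thm:1win} provides directly a move for Left that is winning under \emph{both} normal and mis\`ere play. If instead the weakest Flower is blue, then Left is the first player in the relevant sense, and Proposition~\ref{prop:1first} shows that Left can win under both conventions by first cutting down all of the opponent's (red) Generalized Flowers of height $1$; this is again a move winning under both play conventions. In either case, by Corollary~\ref{cor:onemore} (and its normal-play analogue, Corollary~\ref{cor:Nonemore}) the opponent is then forced to respond by cutting a Flower of the other color, after which the resulting position is again a Generalized Flowerbed with no canceling pair and one fewer Flower of each color. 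Applying the induction hypothesis to this smaller position closes the loop, giving $o^+(F)=o^-(F)$.

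The key point making the induction work is that the winning strategies furnished by Theorem~\ref{thm:1win} and Proposition~\ref{prop:1first} are \emph{the same} under both conventions, and by Lemma~\ref{lem:1w,>2} the subsequent optimal play (maximize the Stalk nim-sum when first player, minimize when second) also coincides across conventions. Consequently the two games evolve through identical positions, and the final Generalized Flowerbed of size $1$ is won by the same player in both games by the $n=1$ base case. I would need to verify carefully that after Left's opening move the position still satisfies the hypotheses of the theorem, namely that no canceling pair is created (removing or shortening a Flower cannot produce a new canceling pair, since cutting strictly weakens a Flower and the originals did not cancel) and that a Flower of height at least $2$ is preserved where needed, falling back on Theorem~\ref{thm:Generalized Sprigs} for the sub-case where all remaining Flowers have height $1$.

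The main obstacle, and the place requiring the most care, is the bookkeeping in the blue-weakest case of Proposition~\ref{prop:1first}: one must confirm that the height-$2$ Flower is never prematurely exhausted, so that the endgame does not collapse into a pure Generalized Sprigs position handled by a different theorem, and that the upper/lower nim-sum monotonicity (via Lemma~\ref{lem:mixUD}) genuinely forces the height-$1$ Flowers to be cut first without loss. Since this structural analysis has already been carried out in Lemma~\ref{lem:1ww} and Proposition~\ref{prop:1first}, the proof of Theorem~\ref{thm:o+o-} itself should reduce to invoking these results and the $n=1$ base case, so I expect the theorem's proof to be short relative to the machinery it depends on.
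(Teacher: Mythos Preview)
Your overall approach matches the paper's: induction on $n$, base case $n=1$ via Propositions~\ref{prop:n1,b<c} and~\ref{prop:n1,b=c}, and for $n\geq 2$ the split between Theorem~\ref{thm:1win} (when the player \emph{not} holding the weakest Flower moves) and Proposition~\ref{prop:1first} (when the player holding the weakest Flower moves). However, there is a genuine gap in how you close the inductive step in the second case.

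Proposition~\ref{prop:1first} tells you that the winning player may begin by cutting down all of the opponent's Generalized Flowers of height~$1$. This supplies a concrete first move \emph{only when such a Flower exists}. If Left is the winning player and every red Flower has height at least~$2$, Proposition~\ref{prop:1first} is vacuous as a prescription for the first move, and neither Lemma~\ref{lem:1ww} nor Lemma~\ref{lem:1w,>2} specifies which move to take either; they only describe the shape of optimal play once a choice of Flower has been made. The paper handles this remaining case by a direct transfer argument: if all red Flowers have height $\geq 2$, then after each player cuts down one Flower the position is a Generalized Flowerbed of size $n-1$ which still contains a red Flower of height $\geq 2$, so the induction hypothesis applies to every relevant follower $F^{LR}$ (when Left plays first) or $F^{RL}$ (when Left plays second). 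In particular the normal-play winning move $F\to F^L$ itself works for mis\`ere, because each $F^{LR}$ has $o^+(F^{LR})=o^-(F^{LR})$ by induction. You do not supply this step, and the lemmas you cite do not cover it.

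A secondary issue is that your case split is by the \emph{color} of the weakest Flower rather than by whether the \emph{first or second player} holds it. These are not the same thing, since both ``Left plays first'' and ``Left plays second'' must be analyzed to conclude $o^+(F)=o^-(F)$. The paper's final case therefore distinguishes explicitly between Left playing first (analyze $F^{LR}$) and Left playing second (analyze $F^{RL}$). Your sketch addresses only the first-player scenario; the second-player scenario can be recovered by symmetry, but this needs to be stated.
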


\begin{proof}

We prove the theorem by induction on $n$, noting that the base case $n=1$ is true by Propositions \ref{prop:n1,b<c} and \ref{prop:n1,b=c}.

For $n \geq 2$, assume without loss of generality that Left wins. 

If the second player has the weakest Flower, by Theorem \ref{thm:1win}, the first player has a move that is winning under both normal and mis\`ere play. So suppose that the first player has the weakest Flower. Note that by Corollary \ref{cor:2play}, the first player must cut down one of the opponent's Flowers, and the second player must do likewise, so that after each player has made one move the resulting game is a Generalized Flowerbed of size $n-1$.

Suppose that there is a red Generalized Flower of height $1$. Then by Proposition \ref{prop:1first}, Left can win under both normal and mis\`ere play by first cutting down any given red Generalized Flower of height $1$. So suppose that there are no red Generalized Flowers of height $1$. Then after each player has made one move the resulting game is a Generalized Flowerbed of size $n-1$ with a Flower of height at least $2$.

If Left plays first and wins under normal play, then there exists a left follower $F^L$ such that for any follower $F^{LR}$ of $F^L$, we have $F^{LR} \in \mathcal{N}^+ \cup \mathcal{L}^+$. Since we may assume that $F^{LR}$ is a Generalized Flowerbed of size $n-1$ with a Flower of height at least $2$, by induction for any follower $F^{LR}$ of $F^L$ we also have that $F^{LR} \in \mathcal{N}^- \cup \mathcal{L}^-$, so that Left wins under mis\`ere play by moving to the same follower $F^L$. 

If Left plays second and wins under normal play, then for any right follower $F^R$ of $F$, there exists a left follower $F^{RL} \in \mathcal{P}^+ \cup \mathcal{L}^+$ of $F^R$. Since we may assume that $F^{RL}$ is a Generalized Flowerbed of size $n-1$ with a Flower of height at least $2$, by induction $F^{RL} \in \mathcal{N}^- \cup \mathcal{L}^-$. Hence if Right moves to $F^R$, Left can win under both play conventions by moving to $F^{RL}$.

\end{proof}

\subsection{Canceling Flowers}\label{cancel}
Given a Flowerbed $F$, let the \textit{trimmed form of $F$} be the minimal Flowerbed $F_t$ that can obtained by removing pairs of Flowers that cancel. 

We first classify the outcome classes for Generalized Flowerbeds where the trimmed form is a Generalized Sprigs game, and thus prove that Theorem \ref{thm:main} holds for these games. We then prove that Theorem \ref{thm:main} holds for all Generalized Flowerbeds where the trimmed form is not a Generalized Sprigs game. 

\begin{thm}\label{thm:special}
\vspace{5pt}
Let $F \in \mathcal{F}$ be a Generalized Flowerbed with trimmed form $F_t$, where $F_t$ is composed of $n_t$ Generalized Sprigs of each color and Stalks all of height $1$, and has no pairs of canceling Flowers. Let $\{x_i\}$ and $\{y_j\}$ be the blossom values of the blue and red Generalized Flowers in $F_t$ respectively, and let $\epsilon(F)= \min\{x_i\}-\min\{y_j\}$. If $F_t$ is empty, let $\epsilon(F)=0$. Let $a$ be the number of Stalks in $F$. Then 
\begin{eqnarray*}
o^+(F)=o^-(F)=o^+(F_t) = o^-(F_t+*)= \left\{ 
\begin{array}{cc}
\mathcal{N} & \text{ if } a \neq 0  \\
\mathcal{P} & \text{ if } \epsilon(F)=0 \text{ and } a=0\\
\mathcal{L} & \text{ if } \epsilon(F)>0 \text{ and } a=0\\
\mathcal{R} & \text{ if } \epsilon(F)<0 \text{ and } a=0.
\end{array}
\right.
\end{eqnarray*}
\end{thm}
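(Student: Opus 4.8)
The plan is to split the four-fold equality into three links and reduce everything to a single mis\`ere outcome identity. The two outer links are immediate: $o^+(F) = o^+(F_t)$ because each canceling pair $*_h:(x) + *_h:(-x)$ has normal-play value $0$ and so may be deleted without changing $o^+$, while $o^+(F_t) = o^-(F_t + *)$ is exactly Theorem~\ref{thm:Generalized Sprigs}(iii) applied to the balanced Generalized Sprigs game $F_t$ (here $\Delta(F_t)=0$, and $F_t^* = F_t + *$ since every component of $F_t$ has height $1$). It therefore remains to prove the single identity $o^-(F) = o^-(F_t + *)$, which by the second link also equals $o^+(F_t)$.

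Before attacking it I would record two reductions. First, the case actually requiring proof here — the one not already subsumed by Section~\ref{Sprigs} — is when $F$ contains a Flower of height at least $2$, so that $F^* = F$; I would restrict to this case, noting that if every Flower has height $1$ then $F$ is a Generalized Sprigs game already handled by Theorems~\ref{thm:Generalized Sprigs} and~\ref{thm:*equal}. Since $F_t$ consists only of height-$1$ positions, every height-$\geq 2$ Flower of $F$ must lie inside a canceling pair. Second, by Theorem~\ref{thm:*equal} every height-$1$ canceling pair is $\equiv^- 0 \pmod{\mathcal{D}}$ and so may be deleted without changing $o^-$; hence I may assume all canceling pairs of $F$ have height at least $2$, with at least one present.

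I would then prove $o^-(F) = o^-(F_t + *)$ by induction on the flowerbed size $n$, using the flower-cutting dynamics already established. The base case $n=1$ is a single canceling pair of height $\geq 2$ (so $F_t$ is the Stalk part and $\epsilon(F)=0$), which is precisely the $x_1=y_1$, $b_1=c_1\geq 2$ case of Proposition~\ref{prop:n1,b=c}; this yields $\mathcal{P}$ when $a=0$ and $\mathcal{N}$ when $a\neq 0$, matching the table and agreeing with $o^-(F_t+*)=o^+(F_t)$. For $n\geq 2$ I would appeal to Corollaries~\ref{cor:onemore} and~\ref{cor:2play}: in a balanced flowerbed each player's optimal first move cuts one of the opponent's Flowers, so after a full round the position is a flowerbed of size $n-1$. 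Cutting down one member of a canceling pair breaks that pair and leaves its partner as an uncanceled Flower; tracking the resulting trimmed form, the game either stays of the present type or becomes a no-canceling flowerbed with a Flower of height $\geq 2$, to which Theorem~\ref{thm:o+o-} and the induction hypothesis apply. Throughout, the first player cuts to maximize and the second to minimize the nim-sum of the resulting Stalks, exactly as in Corollary~\ref{cor:1w,>2}, so the computation collapses onto the upper/lower nim-sum bookkeeping that already governs $F_t$, and the values of $\epsilon(F)$ and $a$ reproduce the outcome $o^+(F_t)$.

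The main obstacle is the mis\`ere endgame parity. One cannot simply delete the canceling pairs: a height-$\geq 2$ pair is \emph{not} $\equiv^- 0 \pmod{\mathcal{D}}$ (unlike a height-$1$ pair), so deleting all of them would give the false identity $o^-(F)=o^-(F_t)$, whereas $o^-(F_t)\neq o^-(F_t+*)=o^+(F_t)$ in general. Instead the block of height-$\geq 2$ canceling pairs must be shown to contribute exactly one net tempo, so that $F$ behaves like $F_t$ with a single extra $*$ adjoined. The parity can be controlled precisely because each height-$\geq 2$ canceling pair is a mis\`ere $\mathcal{P}$-position whose two equal green stalks give the responder room either to preserve evenness (by mirroring) or to inject one tempo; a height-$1$ pair lacks this room and genuinely cancels to $0$, which is exactly why a Flower of height $\geq 2$ is needed to supply the ``$+*$''. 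Making this rigorous — that against the $F_t$-strategy the canceling block can always be played as a single $*$, including at the last move — is the delicate point; since no standalone equivalence $C \equiv^- *$ holds, the argument must stay at the level of outcomes for this particular $F_t$, which is why I would fold the mirroring responses into the inductive strategy rather than isolating a lemma about the canceling block.
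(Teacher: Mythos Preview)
Your outer links and reductions match the paper exactly: $o^+(F)=o^+(F_t)$ by normal-play cancellation, $o^+(F_t)=o^-(F_t+*)$ by Theorem~\ref{thm:Generalized Sprigs}(iii), deleting height-$1$ canceling pairs via Theorem~\ref{thm:*equal}, and reducing to the case where all canceling pairs have height $\geq 2$. The remaining task, to compute $o^-(F)$ directly, is also what the paper does.

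The inductive step, however, has a genuine gap. Corollary~\ref{cor:2play} and Corollary~\ref{cor:1w,>2} both carry the hypothesis ``no pairs of canceling Flowers'', so they do not apply to $F$; you cannot conclude from them that in the balanced flowerbed $F$ each player's optimal first move is to cut down an opponent's Flower (a player might play in a Blossom, a Stalk, or partway into a stem). Even granting that both players cut down a Flower, your case split after a full round is incomplete: if one player cuts a member of a canceling pair and the other player does \emph{not} cut its partner, the resulting game still contains canceling pairs \emph{and} its trimmed form now has an unpaired Flower of height $\geq 2$. That position is neither ``of the present type'' (so the induction hypothesis does not apply) nor a no-canceling flowerbed (so Theorem~\ref{thm:o+o-} does not apply). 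The nim-sum maximize/minimize bookkeeping of Corollary~\ref{cor:1w,>2} likewise does not transfer to this hybrid situation.

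The paper avoids all of this by not inducting on $n$ at all. It gives explicit mirroring strategies, split by whether $a=0$ and by the sign of $\epsilon(F)$. The winning player treats each height-$\geq 2$ canceling pair as a local mirror: a move in a Blossom is answered in the paired Blossom to keep the blossom-value sum nonnegative, and a move in a stem is answered in the paired stem to keep the underlying nim game favourable. Combined with the known Sprigs-game strategy on $F_t$, this forces the desired outcome in every case. Your final paragraph correctly diagnoses that mirroring must replace a standalone equivalence, but your actual inductive argument does not implement that mirroring; it instead invokes lemmas whose hypotheses are violated here.
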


\begin{proof}
We note first that by choice of $F$, $ F+* = F^*$ and $F_t+* = F_t^*$. We note also that the equality $o^+(F) = o^+(F_t)$ is trivial, and the equality of $o^+(F_t)$, $o^-(F_t^*)$ and the expression on the right is true by Theorem \ref{thm:Generalized Sprigs}. Hence it suffices to prove that $o^-(F)$ is equal to the expression on the right. 

Suppose $a = 0$. Then since all Stalks have height $1$, and we may cancel $*+*$, this means that we may assume that $F$ has no Stalks. As Right plays optimally, we may assume that if there are more blue flowers than red flowers and it is Right's turn, he will cut in the stem of a blue flower.

If $\epsilon (F) = 0$, then $F_t$ is empty, and as we may assume that there are no pairs of canceling Sprigs, the game consists only of pairs of canceling Generalized Flowers of height at least $2$. Suppose that Left plays second. Then she wins using the following strategy. If Right plays in a Blossom, Left responds in one of the Blossoms in the canceling pair so as to keep the sum of their blossom values non-negative. If Right plays in a stem, Left responds in the stem of the other flower in the canceling pair so as to maintain a favorable nim game in the Stalks and stems. Eventually the game reduces to a game of nim that Left wins. 

If $\epsilon(F) > 0$, Left wins by using the following strategy. If Right plays in a stem or Blossom of a canceling flower, Left responds in the canceling pair. Otherwise, if there are at least $2$ red flowers remaining in $F_t$, she cuts in the stem of the one with the largest blossom value, and if there is only $1$ red flower remaining in $F_t$, she plays in the blossom of a blue flower remaining in $F_t$ so that sum of the blossom values of the two flowers is non-negative. She is able to do so, because if there is only $1$ red flower remaining in $F_t$, there is only $1$ blue flower remaining in $F_t$, and they have blossom values $x'$ and $y_1$ respectively, where $x' \geq x_1 >y_1$. Hence eventually the game is of the form $E + *:(x') + *:(-y') + A$, where $E$ consists of pairs of canceling flowers of height at least $1$, $x'+y' \geq 0$, $A \in \mathcal{P}^-$ is a stalks game and Right plays next. In this game, Left is able to win playing second by using the same strategy as when $\epsilon (F) = 0$. Similarly, if $\epsilon(F)<0$, Right wins.

If $a \neq 0$, then we may assume that $F$ has exactly one Stalk. If $\epsilon(F) = 0$, then the next player wins by cutting the Stalk. 

If $\epsilon(F) < 0$, we show that both players have a winning move. If Right plays first, he cuts down the Stalk and wins. If Left plays first, she wins by using the following strategy. She first cuts down one of the red flowers in $F_t$. If Right plays in a stem of a canceling flower, Left responds in the canceling pair so as to maintain a favorable nim game in the Stalks and stems. Otherwise, if there is at least one red flower remaining in $F_t$, she cuts one down, and if not, she cuts the Stalk. She is able to do so since at each turn Right must play in a stem of a blue flower. Hence eventually Left cuts the Stalk, and the game is a Stalks game either with zero nim-sum  and some stalk of height at least $2$, or non-zero nim-sum and all stalks of height $1$. In either case, the previous player wins, so Left wins. Similarly, if $\epsilon(F)>0$, both players have a winning move.
\end{proof}

In order to prove Theorem \ref{thm:main} for Generalized Flowerbeds where the trimmed form is not a Generalized Sprigs game, we first prove an intermediary result.

\begin{lem}\label{lem:n1,b=c,x=y} Let $F \in \mathcal{F}$ be a Generalized Flowerbed where the Generalized Flowers form canceling pairs. Let $F_L$ and $F_R$ be the sum of all the blue Generalized Flowers and all the red Generalized Flowers respectively. Let $F_t$ be the Stalks game that is the trimmed form of $F$, and let it have nim-sum $a$. Then 
\begin{eqnarray*}
o^+(F) = o^-(F^*) = o^+(F_t)= o^-((F_t)^*) = \left\{
\begin{array}{cc}
\mathcal{N} & a \neq 0 \\
\mathcal{P} & a = 0 .
\end{array}
\right.
\end{eqnarray*}
\end{lem}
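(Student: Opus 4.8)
The plan is to prove the four asserted quantities coincide with the displayed outcome by establishing the three links $o^+(F)=o^+(F_t)$, $o^+(F_t)=o^-((F_t)^*)$, and $o^-(F^*)=o^+(F_t)$, the last being the only substantial one. For the first, under normal play each canceling pair $*_h:(x)+*_h:(-x)$ is a zero game, since $*_h:(x)=\overline{*_h:(-x)}$; hence $F=^+F_t$ and $o^+(F)=o^+(F_t)$, which Bouton's theorem (Lemma \ref{lem:nim}) evaluates as $\mathcal N$ when $a\neq 0$ and $\mathcal P$ when $a=0$. The second link is immediate from Lemma \ref{lem:nim} applied to the pure nim position $F_t$, giving $o^-((F_t)^*)=o^+(F_t)$. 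Everything therefore reduces to computing $o^-(F^*)$.

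First I would record a structural simplification. Because the flowers of $F$ come in pairs $*_h:(x),*_h:(-x)$ that are interchanged by the color-swap (conjugation) $G\mapsto\bar G$, while every stalk and the adjoined $*$ are self-conjugate, the game $F^*$ satisfies $\overline{F^*}=F^*$. A self-conjugate game cannot lie in $\mathcal L$ or $\mathcal R$ under either convention, so $o^-(F^*)\in\{\mathcal N,\mathcal P\}$ and it suffices to decide whether the first player has a winning move. I would then dispose of two easy regimes. If every flower is a Sprig, then by Theorem \ref{thm:*equal} each canceling pair is $\equiv^- 0 \pmod{\mathcal D}$, so $F\equiv^- F_t \pmod{\mathcal D}$; since $F$ and $F_t$ then have the same height profile they receive the same star-adjustment, whence $o^-(F^*)=o^-((F_t)^*)=o^+(F_t)$ by Lemma \ref{lem:nim}. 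If all components of $F$ have height $1$ then $F^*=F+*$, and after canceling Sprig pairs (Theorem \ref{thm:*equal}) and pairs of unit stalks ($*+*\equiv^- 0 \pmod{\mathcal D}$, Allen) one is left with $0$ or a single $*$, adjoined to the extra $*$; this gives $o^-(F^*)=\mathcal N$ when $a\neq 0$ and $\mathcal P$ when $a=0$.

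The remaining case, where $F$ contains a canceling pair of height $\geq 2$ (so $F^*=F$), is the heart of the argument, and I would settle $a=0$ first and reduce $a\neq 0$ to it. For $a=0$ I would build a second-player winning strategy from two ingredients: a \emph{mis\`ere-nim} strategy on the green heaps $\Gamma$, the multiset of all free stalks together with the stems of all flowers (cutting a stem or stalk is a nim move, a blossom move fixes $\Gamma$), and a mirroring rule for blossoms. Since canceling partners have equal heights, the stems enter $\Gamma$ in equal pairs and contribute nothing to the nim-sum, so $\Gamma$ has nim-sum $a=0$; the pair of height $\geq 2$ moreover forces a heap of size $\geq 2$ in $\Gamma$, so by Lemma \ref{lem:nim} $\Gamma$ is a mis\`ere-nim $\mathcal P$-position. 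The second player answers a blossom move by the conjugate move in the partner blossom (available exactly as in the proof of Theorem \ref{thm:*equal}, as surviving pairs keep balanced blossoms), a free-stalk cut by the mis\`ere-nim restoring move, and a stem cut of a pair by cutting that same pair's other stem to the mis\`ere-nim-correct height, so that both blossoms of a disturbed pair vanish together. Since blossoms disappear precisely when stems are cut, when $\Gamma$ is exhausted no blossoms remain and the first player is forced to make the final move, so $o^-(F)=\mathcal P$. The case $a\neq 0$ then follows: the first player cuts a free stalk to set the nim-sum of $\Gamma$ to $0$ while leaving the height-$\geq 2$ pair intact, reaching an $a=0$ position, which is $\mathcal P$, so $o^-(F)=\mathcal N$.

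I expect the genuine obstacle to be the mis\`ere endgame inside this second-player strategy. Strict pairing of moves is a \emph{normal}-play device; under mis\`ere play the all-units regime forces the parity correction familiar from Bouton's theorem, so at the last height-$\geq 2$ heap the second player must break the pairing and cut a tall stem down to a single unit, orphaning a height-$1$ blossom with no partner to mirror. The delicate point is to show this orphan is harmless: once every green heap has height $1$, the surviving position is a canceling-pair flowerbed of Sprigs and unit stalks, so Theorem \ref{thm:*equal} together with $*+*\equiv^- 0\pmod{\mathcal D}$ collapses it to $0$ or $*$ with the parity dictated by the all-units regime of $\Gamma$. Making this collapse rigorous—in effect showing that a canceling pair of height $\geq 2$ may be traded for the two equal nim-heaps $*_h+*_h$ for the purpose of computing $o^-$—and verifying that every second-player response is legal and never leaves an unrecoverable orphan, is where the bulk of the work lies; the tall heap supplied by the height-$\geq 2$ pair is exactly the resource enabling the parity correction, and the ordinal-sum monotonicity of Theorem \ref{thm:ord} is on hand to bound the value of any orphaned blossom if required.
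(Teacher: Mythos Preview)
Your approach is essentially the paper's: reduce to computing $o^-(F^*)$, use Theorem~\ref{thm:*equal} to cancel Sprig pairs so every remaining flower has height $\geq 2$ (hence $F^*=F$), then give a second-player pairing strategy for $a=0$ and reduce $a\neq 0$ to it by a single stalk cut. The self-conjugacy observation and the preliminary reductions are fine.

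The difference is in how tightly you specify the green-heap response, and this is what generates your orphan worry. The paper's rule is: answer a stalk cut \emph{with a stalk cut} and answer a stem cut \emph{in the partner stem}. With this rule no pairing is ever broken. The point you are missing is that the restriction to stalks is never an obstacle: since stems always come in equal pairs of height $\geq 2$, they contribute $0$ to the nim-sum, so the free stalks alone carry nim-sum $0$; when Right cuts a stalk the usual normal-nim restoring move is available \emph{among the stalks}. Moreover, as long as any flower pair survives there is a heap of size $\geq 2$, so mis\`ere-nim and normal-nim coincide and ``nim-sum $0$'' is the correct target. The only moment the mis\`ere parity correction is needed is when Right cuts a stem of the \emph{last} remaining pair; Left is then cutting the partner stem (height $\geq 2$), and she may choose height $0$ or $1$ to fix the parity. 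Both blossoms of that pair are already gone, so nothing is orphaned.

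Your anxiety about ``cutting a tall stem down to a single unit, orphaning a height-$1$ blossom'' conflates two things: cutting in a stem removes that flower's blossom outright (it becomes a stalk), so the move cannot orphan its own blossom; and because you only ever cut a stem when Right has just cut its partner's stem, the partner's blossom is also already gone. The elaborate endgame collapse via Theorem~\ref{thm:*equal} and Theorem~\ref{thm:ord} is therefore unnecessary. If you tighten ``the mis\`ere-nim restoring move'' to ``the nim-sum-$0$ restoring move \emph{in a free stalk}'' and record the observation that flowers' stems never need to be touched except in response to a cut of their partner, your proof becomes complete and coincides with the paper's.
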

\begin{proof}
We note that the final two equalities hold by our previous results for Generalized Flowerbeds without canceling pairs (Proposition \ref{prop:n1,b<c} and Proposition \ref{prop:n1,b=c}), and the equality of $o^+(F)$ and $o^+(F_t)$ holds as canceling flowers are equivalent under normal play. Thus it remains to prove that $o^-(F^*)$ is equal to the expression on the right.
 
We note first that if there are no Generalized Flowers in $F_L$ or $F_R$, the result is trivially true, so we may assume that $F_L+F_R$ is non-empty. In addition, if there is a pair of Generalized Sprigs that cancel, then by Theorem \ref{thm:*equal}, removing the pair does not affect the outcome of the game. Thus we can assume that all Flowers in $F_L+F_R$ have height at least $2$. This means, in particular, that $F^*=F$.

Consider the game $F$ and assume without loss of generality that Right plays first. Suppose that $a = 0$. If Right plays in a Blossom, Left responds in the Blossom of the canceling Flower to keep the sum of their normal play values non-negative. If Right plays in a Stalk or stem, then Left plays in a Stalk or the stem of the canceling Flower respectively in order to maintain a favorable nim game in the Stalks and stems. Eventually the game reduces to a game of nim that Left wins. Hence if Right plays first, Left wins, and similarly if Left plays first, Right wins.

If $a \neq 0$, the first player wins in $F$ by making the Stalk nim-sum $0$.
\end{proof}

We call a game $F_L+F_R$ \textit{left-canceling} if $F_L$ consists of blue Flowers, $F_R$ consists of the same number of red Flowers, all the Flowers are of height at least $2$, and for each $i$, the $i$th weakest  blue Flower is both the same height as and at least as strong as the $i$th weakest red Flower. 

\begin{thm}
Suppose $F \in \mathcal{F}$ is a Generalized Flowerbed of size $n$, and its trimmed form $F_t$ is a Generalized Flowerbed of size $n_t$. Then 
\vspace{-5pt}
\begin{eqnarray*}
o^+(F)=o^-(F^*)=o^+(F_t)=o^-((F_t)^*).
\end{eqnarray*}
\end{thm}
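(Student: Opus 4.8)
The plan is to prove the four-way equality as a chain, $o^+(F)=o^+(F_t)$, then $o^+(F_t)=o^-((F_t)^*)$, and finally $o^-(F^*)=o^-((F_t)^*)$, so that the only genuinely new work lies in the last link. The first equality is immediate: a canceling pair $*_h:(x)+*_h:(-x)$ is a normal-play zero because $*_h:(-x)=-(*_h:(x))$, so $F=^+F_t$ and the two share a normal outcome. The second equality uses that the trimmed form $F_t$ contains no canceling pair and hence falls under the results of Sections \ref{Sprigs} and \ref{nocancel}: if $F_t$ carries a Flower of height at least $2$ then $(F_t)^*=F_t$ and Theorem \ref{thm:o+o-} gives $o^+(F_t)=o^-(F_t)=o^-((F_t)^*)$, while if $F_t$ is a Generalized Sprigs game then Theorem \ref{thm:Generalized Sprigs} gives $o^+(F_t)=o^-((F_t)^*)$ directly.

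For the remaining equality $o^-(F^*)=o^-((F_t)^*)$ I would first strip all height-$1$ canceling pairs. By Theorem \ref{thm:*equal} each such pair is $\equiv^- 0 \pmod{\mathcal{D}}$, and since every position here lies in $\mathcal{H}_*\subset\mathcal{D}$, deleting these pairs changes no misère outcome (and one may add $*$ on both sides, as $*$ is dicot). This reduces the problem to $F=F_t+C$, where $C$ is a disjunctive sum of exactly-canceling pairs each of height at least $2$. I would then split on the shape of $F_t$. If $F_t$ has no Flower of height at least $2$, so that $F_t$ is a Generalized Sprigs game, then the tall pairs in $C$ keep $F$ tall, whence $F^*=F$, and the identity $o^-(F)=o^-((F_t)^*)$ is supplied by Theorem \ref{thm:special} together with Theorem \ref{thm:Generalized Sprigs} (the case $n_t=0$ being Lemma \ref{lem:n1,b=c,x=y}). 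If instead $F_t$ carries a Flower of height at least $2$, then both $F$ and $F_t$ are tall, so $F^*=F$ and $(F_t)^*=F_t$, and it remains to show $o^-(F)=o^-(F_t)$.

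I would prove this tall-Flower case by a pairing argument: the misère winner of $F_t$ also wins $F$. Suppose Left wins $F_t$; by Theorem \ref{thm:o+o-} this is also the normal-play winner, and by Lemma \ref{lem:1w,>2} and Proposition \ref{prop:1first} Left has a strategy that keeps a Flower of height at least $2$ in play until the game is a Flowerbed of size $1$. Left plays that strategy on the $F_t$ component and answers every move Right makes inside $C$ within the same canceling pair: to a blossom move she replies in the partner blossom so as to keep the paired normal-play value non-negative (so the pair stays \emph{left-canceling} and a blue Flower survives, exactly as in Theorem \ref{thm:*equal}), and to a stem cut to height $h'$ she cuts the partner stem to the same height $h'$, so the two resulting Stalks contribute nim-sum $h'\oplus h'=0$. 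Thus after each of Left's turns the component $C$ remains a sum of canceling pairs contributing $0$ to the Stalk nim-sum, and the position reduces to a tall Flowerbed governed by exactly the upper and lower nim-sum conditions of Corollary \ref{cor:1w,>2} that decide $F_t$ itself; hence $o^-(F)=o^-(F_t)$, with the symmetric argument covering a Right win.

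The hard part will be precisely this tall-Flower case. The difficulty is that canceling pairs are not misère-invertible, so one cannot simply cancel them as under normal play, and a blind Tweedledum--Tweedledee mirror actually fails here (for example $*+*\in\mathcal{N}^-$, not $\mathcal{P}^-$, so mirroring strands the responder with the last move). The argument survives only because $F_t$ is tall: Theorem \ref{thm:o+o-} guarantees that its normal and misère outcomes already coincide, and Lemma \ref{lem:1w,>2} lets a tall Flower be preserved until the endgame, which prevents the position from collapsing prematurely into the degenerate all-height-$1$ misère-nim regime where the parity would flip. Verifying that Left's combined strategy never leaves her without a legal reply, and that the paired Stalks leave the terminal nim-value computation of Corollary \ref{cor:1w,>2} untouched, is the delicate point that must be checked.
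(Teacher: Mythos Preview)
Your chain of equalities and your handling of the first two links, together with the sprigs-$F_t$ case via Theorem~\ref{thm:special}, match the paper and are correct. The gap is in the tall-$F_t$ case: the proposed strategy of following Left's $F_t$ misère strategy on the $F_t$ component while mirroring inside each canceling pair of $C$ does not work at the endgame.

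The failure is exactly the ``delicate point'' you flag. The standalone $F_t$ strategy is calibrated to the misère parity of $F_t$ alone; once the $F_t$ portion collapses to an all-height-$1$ Stalks game, that parity is the opposite of what is needed in $F_t+C$, because $C$ still carries tall pairs. Concretely, take $F_t=\,*:(1)+*_2:(-1)+*$ (so $b_1=1<c_1=2$, $a=1$, and $o^-(F_t)=\mathcal L$ by Proposition~\ref{prop:n1,b<c}) and $C=*_3:(1)+*_3:(-1)$. In $F_t$ alone, Left going first wins by cutting the red Flower to height $1$; your strategy has her do the same in $F$. Right then cuts the blue Sprig in $F_t$, and your strategy has Left and Right exchange the two remaining $*$'s. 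Right simply refuses to touch $C$ throughout; once the $F_t$ remnant is empty it is Left's turn facing $C$ alone, which is $\mathcal P^-$ by Lemma~\ref{lem:n1,b=c,x=y}, and she loses. The appeal to Corollary~\ref{cor:1w,>2} does not repair this: that corollary applies only to Flowerbeds with no canceling pairs, so it speaks to $F_t$, not to $F$.

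The paper closes this gap not with a uniform mirror but with an induction on the size $n'$ of the non-canceling part $F'$, proving a slightly stronger hypothesis (left-canceling rather than exactly canceling pairs) and carrying out a detailed base case at $n'=1$ in which Left's moves in $F'$ are chosen with the tall pairs $F_L+F_R$ in view. In the example above the paper's move is to cut the red Flower in $F'$ to height $0$ rather than $1$, leaving $F_L+F_R+*:(1)+*$; Lemma~\ref{lem:ordgreater} with $G=*$, $H=(1)$, $Y=F_L+F_R+*$ then gives Left the win playing second, using $o^-(F_L+F_R+*+*)=\mathcal P$ from Lemma~\ref{lem:n1,b=c,x=y}. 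In short, the endgame choice inside $F'$ must be adapted to the presence of $C$, and this is what forces the paper's case-by-case analysis at $n'=1$ rather than a direct transplant of the standalone $F_t$ strategy.
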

\begin{proof}
Let the pairs of Flowers that cancel be $F_L+F_R$, where $F_L$ contains the blue ones and $F_R$ the red ones. Then $F=F_L+F_R+F_t$.

As in Lemma \ref{lem:n1,b=c,x=y}, we can assume that all Flowers in $F_L+F_R$ have height at least $2$. We also note that equalities $o^+(F)=o^+(F_t)=o^-((F_t)^*)$ are trivial by the fact that $F_L+F_R =^+0$ and by Theorem \ref{thm:o+o-}. Moreover, by Theorem \ref{thm:special}, the equalities all hold for Generalized Flowerbeds where the trimmed form is a Generalized Sprigs game, that is, when all the Generalized Flowers in $F_t$ have height $1$. Thus, it suffices to prove that $o^-(F^*)$ is equal to any of the other three.

We assume mis\`ere play and prove a slightly stronger statement by induction. \vspace{5pt}\\
\textit{Inductive Hypothesis: Let $F=F_L+F_R+F'$, where $F_L+F_R$ is Left-canceling and $F'$ is a Generalized Flowerbed of size $n'$ with a Generalized Flower or Stalk of height at least $2$. Suppose that Left has a winning strategy under mis\`ere play for $F'$. Then Left has a winning strategy under mis\`ere play for $F$.}\vspace{5pt}\\
In the following we let the \textit{paired Flower} of the $i$th weakest Flower in $F_L$ be the $i$th weakest Flower in $F_R$, and vice versa, where pairings do not change as the game progresses. We induct on $n'$. \vspace{5pt}\\
\textit{Base Cases: $n'=0$ and $n'=1$}

If $n'=0$, suppose $F_L+F_R$ is comprised of canceling pairs. Then the hypothesis is true by Lemma \ref{lem:n1,b=c,x=y}. Otherwise, for each $i$, the $i$th weakest blue flower is $*_{h_i}:(x_i)$ and the $i$th weakest red flower if $*_{h_i}:(-y_i)$ for some $h_i$ and $x_i \geq y_i$. Again by Lemma \ref{lem:n1,b=c,x=y}, Left wins $H=\sum_i *_{h_i}:(y_i)+ \sum_i *_{h_i}:(-y_i) + F'$. Hence Left can win in $F$ by following the same strategy as in $H$, at each turn keeping the sum of blossom values $x_i-y_i$ at least as large as the sum of the corresponding blossom values in $H$. Hence if Left has a move in $H$, Left can perform an equivalent move in $F$, and so Right must make the last move and Left wins.

If $n'=1$, we let $F' = *_{b_1}:(x_1) + *_{c_1}:(-y_1)+A$, where $A$ is a Stalks game with nim-sum $a$. We split this into a number of cases.

Suppose that all Stalks in $F'$ have height $1$. In particular, this means that $F'$ has a Generalized Flower of height at least $2$. Then we have the following cases:
\begin{itemize}
\item\textit{Case 1: $b_1<c_1$, Left plays first}
\begin{itemize}
\item \textit{Sub-case a: $b_1=1$}\\
Left cuts the red Generalized Flower in $F'$ to a Stalk of height $0$ or $1$ to leave the game $F_L+F_R+*:(x_1) + *$ with Right playing next. Since by Lemma \ref{lem:n1,b=c,x=y} $F_L+F_R+*+* \in \mathcal{P} \cup \mathcal{L}$, Left wins playing second in $F_L+F_R+*:(x_1) + *$ by Lemma \ref{lem:ordgreater}, with $H=(x_1)$, $G=*$ and $Y=F_L+F_R+*$. 
\item \textit{Sub-case b(i): $b_1 >1$, no Stalks}\\
Left cuts the red Generalized Flower to a Stalk of height $b_1$ to leave a game $F_L+F_R+*_{b_1}:(x_1) + *_{b_1}$ with Right playing next. Since by Lemma \ref{lem:n1,b=c,x=y} $F_L+F_R+*_{b_1}+*_{b_1} \in \mathcal{P} \cup \mathcal{L}$ , Left wins as in the first subcase by Lemma \ref{lem:ordgreater}.
\item \textit{Sub-case b(ii): $b_1>1$, one Stalk}\\
Left's winning strategy in $F$ is as follows. First, she cuts the Stalk. Then Right must cut down a blue Generalized Flower, or else they move to Sub-case b(i) and Left wins.

If Right plays in a Generalized Flower in $F_L+F_R$, Left mimics in the paired Generalized Flower in $F_L+F_R$, otherwise she follows her winning strategy in $F'$. Thus at each turn Right must cut down a Generalized Flower. 

If all of $F_L+F_R$ is cut down before Right plays in $F'$, then it is Right's turn and the remaining game is of the form $*_{b_1}:(x_1) + *_{c_1}:(-y_1) + A$, where $b_1<c_1$ and $A$ is the Stalks game with nim-sum $0$ comprised of the uncut portions of the stems in $F_L+F_R$. Hence, by Proposition \ref{prop:n1,b<c} Left wins. 

If Right plays in $F'$ before all of $F_L+F_R$ is cut down, this must be to cut the blue Generalized Flower in $F'$ to a Stalk of height $b_1'<b_1$. Left then cuts the red Generalized Flower to a Stalk of height $b_1'$ to leave a game $F_L'+F_R'+*_{b_1'}+*_{b_1'}$ where $F_L'+F_R'$ is left-canceling. Thus Left wins playing second by the case $n'=0$.
\end{itemize}
\item\textit{Case 2: $b_1<c_1$, Right plays first}\\
Note first that since all Stalks in $F'$ have height $1$ and $a \leq 1$, Right must first cut down a blue Generalized Flower, or else we move to the same case with Left playing first and Left wins. We give a strategy for Left to win $F$.

If Right cuts down a Generalized Flower in $F_L$ to a Stalk of height $h \geq 2$, Left cuts down the paired Flower in $F_R$ to a Stalk of height $h'$ for which she has a winning strategy playing second in $F' + *_h + *_{h'}$. By Proposition \ref{prop:n1,b<c}, at least one of $0$, $1$ or $h$ will work as a value for $h'$. Then since there is now a Stalk of height at least two, Left wins.

If Right cuts down a Generalized Flower in $F_L+F_R$ to a Stalk of height $0$ or $1$, Left mimics in the paired Generalized Flower in $F_L+F_R$, otherwise she follows her winning strategy in $F'$. Thus at each turn Right must cut down a Generalized Flower. 

If all of $F_L+F_R$ is cut down before Right plays in $F'$, then it is Right's turn and the remaining game is of the form $F' + A$, where $A$ is the Stalks game with nim-sum $0$ comprised of the uncut portions of the stems in $F_L+F_R$, so by Proposition \ref{prop:n1,b<c} Left wins. 

If Right plays in $F'$ before all of $F_L+F_R$ is cut down, this must be to cut the blue Generalized Flower in $F'$ to a Stalk of height $b_1'<b_1$. Left then cuts the red Generalized Flower to a Stalk of height $b_1'$ to leave a game $F_L'+F_R'+A'$ where $F_L'+F_R'$ is left-canceling and $A'$ has nim-sum $0$. Thus Left wins playing second by Lemma \ref{lem:n1,b=c,x=y}.

\item\textit{Case 3: $b_1=c_1\geq 2$, $x_1>y_1$, Left plays first}\\
Let $\alpha$ be the largest integer such that $2^{\alpha}|b_1$. Since all the Stalks have height $1$, they have nim-sum $a \leq 1$ so $a < 2^{\alpha+1}$ and, by Proposition \ref{prop:n1,b=c}, Left wins $F'$. So we give a strategy for Left to win $F$.

Let $G$ be the same game $F$ with the Blossoms $x_1$ and $y_1$ replaced by empty Blossoms. Since $G$ is symmetric, $G$ is either next player win or previous player win.

Suppose $G \in \mathcal{N^-}$. Then Left simply follows her winning strategy in the modified game, with the one and only difference that if Right plays in the Blossom $x_1$ or $y_1$, Left responds so as to keep the sum of the Blossom normal-play values non-negative. 

So suppose $G \in \mathcal{P^-}$. Then Left plays in the Blossoms $x_1$ and  $y_1$ so as to keep the sum of their normal play values non-negative and then follows her winning strategy for $G$, with the one and only difference that if Right plays in the Blossom $x_1$ or $y_1$, Left responds so as to keep the Blossom sum non-negative.
\item\textit{Case 4: $b_1=c_1\geq 2$, $x_1>y_1$, Right plays first}\\
For the same reasons as in Case 2, Right must always cut a blue Generalized Flower. Again, if he cuts down a Generalized Flower in $F_L$ to a Stalk of height at least $2$, then Left has a winning move, and if he cuts a blue Generalized Flower in $F_L$, he must cut it to a Stalk of height $1$, and Left can mimic this move. So in this case as well, eventually Right cuts the blue Generalized Flower in $F'$ to a Stalk of height $b_1'<b_1$, and Left cuts the red Generalized Flower in $F'$ to a Stalk of height $b_1'$, $0$ or $1$, at least one of which will leave a winning Stalk position for her.
\end{itemize}

Thus the inductive hypothesis holds in the case when all the Stalks in $F'$ have height $1$.

Suppose now that there is a Stalk of height at least $2$ in $F'$. Then Left's winning strategy in $F$ is as follows. If Right has just played in a Blossom in $F_L+F_R$, then Left responds either in the same Blossom or in the Blossom of its paired Flower so as to keep the sum of their normal play values non-negative. If Right has just played in a stem in $F_L+F_R$, Left responds by performing the same move in the paired Flower. Otherwise, Left follows her original strategy in $F'$. 

If in this process the remaining Stalks all have height $1$, then the game is of the form $F'_L+F'_R+F''$, where $F''$ is a Generalized Flowerbed of size $1$ with Stalks all of height $1$, and Left has a winning strategy for $F''$. Thus, by the above analysis, Left wins.

Otherwise, the game reaches a position where both players have cut down exactly one Generalized Flower in $F'$, the game is of the form $F'_L + F'_R + A'$, where $F'_L +F'_R$ is Left-canceling, it is Right's turn, and Left has a winning strategy playing second in $A'$. By the case $n'=0$, Left wins.

\textit{Inductive Step}\\
Suppose that the statement is true for $n'-1$. Then Left's winning strategy in $F$ is as follows. At any move, if Right has just played in $F_L+F_R$, then Left responds in the  paired Flower in $F_L+F_R$. Otherwise, she plays the move in $F'$ that allows her to win under both normal and mis\`ere play. Eventually both players have cut down exactly one Generalized Flower in $F'$, and reach a game of the form 
\vspace{-10pt}
\begin{eqnarray*}
F'_L + F'_R + A' + F'',
\end{eqnarray*}
where $F'_L+F'_R$ is left-canceling, $A'$ is a Stalk game with nim-sum $0$ comprised of the uncut portions of the cut stems in $F_L+F_R$, and $F''$ is the result of cutting down the two Generalized Flowers from $F'$. In particular, $F''$ has $n'-1$ Generalized Flowers of each color, no blue Generalized Flower and red Generalized Flower equally weak, and following the initial order of play, Left wins in $F''$. Thus, since play in a game of Stalks in normal play depends only on the nim-sum of the Stalks, $o^+(F'_L+F'_R+A'+F'' )= o^+(F'_L+F'_R+F'')$.

Moreover, by Lemma \ref{lem:1ww}, as $F'$ has a Generalized Flower of height at least $2$, $F''$ has a Generalized Flower of height at least 2. Hence, we can assume that the inductive hypothesis is true for both $F''$ and $F''+A'$. In particular, by induction, we have $o^+(F'_L+F'_R+(A'+F'')) = o^-(F'_L+F'_R+(A'+F''))$ and $o^+(F'_L+F'_R+F'') = o^+(F'')=o^-(F'')$, so combining these results we obtain that
\begin{eqnarray*}
o^+(F'_L+F'_R+A'+F'') = o^-(F'_L+F'_R+A'+F'')=o^+(F'_L+F'_R+F'') = o^+(F'')=o^-(F'').
\end{eqnarray*}
Thus, following the initial order of play, Left wins from this new position. If Left follows the above strategy, then the order of play when they reach this position is the same as the initial order, and so Left wins in $F$ under mis\`ere play.
\end{proof}

\section{Mis\`ere Star-Based Hackenbush Positions}
Our results show that for Hackenbush positions in which a green Stalk supports either a Green Hackenbush position or a Red-Blue Hackenbush position, there is a simple relationship between the outcome classes under both play conventions. While a similar result for general star-based positions would be difficult to obtain (consider, for example, positions consisting of a green Stalk supporting a disjunctive sum of Flowers!), our results can be extended to any single star-based Hackenbush position $*:G$.

\begin{thm}
Let $G$ be a star-based Hackenbush position. Let $G^* = G+*$ if the graph corresponding to $G$ with shortest height has height $1$, and $G^*=G$ otherwise. Then
\begin{eqnarray*}
o^+(G) = o^-(G^*), \text{ and } o^-(G) = o^+(G^*).
\end{eqnarray*}
\end{thm}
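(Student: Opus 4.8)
The plan is to exploit the fact that a single star-based position $G=*:L$ is a dicot: its unique grounded edge is green, and cutting it deletes the whole position, so at every stage either both players can move (by cutting that edge) or the position is already empty. Hence $G\in\mathcal D$, and Allen's identity $*+*\equiv^-0\pmod{\mathcal D}$ is available throughout. Just as importantly, every follower of $G$ obtained without cutting the base is again a single star-based position, and cutting the base yields $0$; this self-similarity of the game tree is exactly what a disjunctive sum of flowers lacks (several independent stems), and it is what makes a clean induction possible here while the general sum is intractable. I would first dispose of the impartial case: if $G$ is Green Hackenbush then it is a Shrub, so by Theorem \ref{thm:shrubs} it is equivalent in the misère universe to $*_{g^+(G)}$, its shortest height equals $g^+(G)$, and both required equalities reduce to the single-heap instance of Lemma \ref{lem:nim}. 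The partizan case I would prove by induction on the number of edges, organised around the shortest height $h$ of $G$, since $h$ decides whether $G^*=G$ (when $h\ge2$) or $G^*=G+*$ (when $h=1$); the base case $G=*$ is checked directly.

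The structural engine for $h\ge2$ is that, in the shortest-height drawing, the stem begins with two green edges $e_1=(\text{ground},v_1)$ and $e_2=(v_1,v_2)$, so both players always have, in addition to playing in the blossom, a move cutting $e_2$ to leave exactly $*$ and a move cutting $e_1$ to leave $0$. This persistent ``tempo'' move to $*$ is what forces $o^+(G)=o^-(G)$, so that both asserted equalities collapse into one. I would establish it by converting a normal-play winning strategy into a misère one: the winner follows the normal strategy, treating each reachable height-$\ge2$ star-based follower inductively as its normal-play self, and then uses the move to $*$ (or to $0$) at the single parity-critical moment to ensure the opponent is forced to make the last move, in direct analogy with the device used for Shrubs in Theorem \ref{shrubs2}.

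For $h=1$ there is no stem tempo move. When the blossom is red-blue, $G$ is a Generalized Sprig and the conclusion is exactly Theorem \ref{thm:Generalized Sprigs}; for a general blossom $G=*:K$ I would run the ordinal-sum machinery with base $*$ (which has a Left and a Right option) and $H=K$, applying Theorem \ref{thm:ord} and Lemma \ref{lem:ordgreater} to control who has a winning first move in $G$, in $G+*$, and in their followers, so as to obtain $o^+(G)=o^-(G+*)$ and $o^+(G+*)=o^-(G)$ simultaneously. I emphasise that here the two equalities must be proved together: the dicot cancellation only yields $o^-(G+*+*)=o^-(G)$, which does not by itself link $o^+(G+*)$ to $o^-(G)$ without re-entering the induction at a position ($G+*$) that is not a single star-based position.

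The main obstacle I anticipate is the general mixed blossom --- a green stem supporting a position that is neither purely red-blue (a Generalized Flower) nor purely green (a Shrub) --- for which none of Theorems \ref{thm:shrubs}, \ref{thm:Generalized Sprigs}, or \ref{thm:main} applies directly. The delicate point is verifying that the tempo-to-$*$ argument for $h\ge2$ survives against arbitrary blossom replies $K^L,K^R$, whose induced followers are again general star-based positions, and that the height-$1$ strategy argument closes for mixed blossoms where the normal-play value of $K$ need not be a number. One must also track carefully the moves that drop the shortest height to $1$ (switching the inductive target from $G$ to $G+*$), so that the correct form of the inductive hypothesis is invoked at each step.
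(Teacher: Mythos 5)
Your plan diverges sharply from the paper's proof, and the divergence matters: the paper's argument is a half-page direct computation that never inducts and never looks inside the blossom, whereas your induction runs straight into a case you admit you cannot close. The paper observes two things. First, $o^+(G)=\mathcal{N}$ for every single star-based position (cut the grounded edge), and $o^-(G^*)=\mathcal{N}$ (if the shortest height is $1$, cut the grounded edge of $G$ inside $G+*$, leaving $*$; if it is at least $2$, cut the second stem edge, leaving $*$); that settles $o^+(G)=o^-(G^*)$ with two explicit moves and no induction. Second, writing $G=*:\hat G$, misère play of the single position $G$ is normal play of $\hat G$ in disguise --- cutting the grounded edge always ends the game with you as last mover, so neither player does it until forced, and the loser is whoever first runs out of moves in $\hat G$ --- giving $o^-(G)=o^+(\hat G)$ for an arbitrary $\hat G$; and $o^+(*:\hat G+*)=o^+(\hat G)$ because the two grounded edges pair off. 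These two identities dispose of $o^-(G)=o^+(G^*)$ in both cases of $G^*$ (when $G^*=G$ the shortest height is at least $2$, so $\hat G$ is itself star-based and everything is $\mathcal{N}$). No property of the blossom --- green, red-blue, or mixed --- is ever used.

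By contrast, the step you flag as ``the main obstacle'' is a genuine gap, not a technicality. For $h=1$ with a mixed blossom $K$, Theorem \ref{thm:ord} and Lemma \ref{lem:ordgreater} require $K\geq^+0$ or $K>^+0$ (or the reversed inequalities), and a general blossom satisfies none of these --- $K$ can be confused with $0$ --- so the ordinal-sum machinery you propose simply does not apply, and Theorem \ref{thm:Generalized Sprigs} covers only red-blue blossoms. Your $h\geq 2$ argument, while salvageable, is also over-engineered: both outcomes are $\mathcal{N}$ by a single move each, so no normal-to-misère strategy conversion in the style of Theorem \ref{shrubs2} is needed, and attempting one forces you to track how the shortest-height representative of each follower changes, which is exactly the bookkeeping you concede is delicate. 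The missing idea is that for a \emph{single} star-based component the grounded edge is a universal poisoned last move under misère play; once you see that, the theorem reduces to $o^-(*:K)=o^+(K)$ and $o^+(*:K+*)=o^+(K)$, and the entire case analysis on the blossom evaporates.
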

\begin{proof}
We first prove that $o^+(G) = o^-(G^*)$. As the first player can win $G$ under normal play by cutting the grounded green edge, $o^+(G)=\mathcal{N}$.

If $G^*=G+*$, take a graph representing $G$ with height $1$. Then the first player can win $G^*$ under mis\`ere play by cutting the grounded green edge of $G$, leaving $*$, so $o^-(G)=\mathcal{N}$. If $G^*=G$, then $G=*_n:\hat G$ for some $n \geq 2$ and position $\hat G$, so that the first player can win $G^*$ under mis\`ere play by cutting the green edge right above the grounded green edge of $G$, leaving $*$, so $o^-(G)=\mathcal{N}$ again.

We now prove that $o^-(G) = o^+(G^*)$. As $G$ is a star-based Hackenbush position, $G=*:\hat G$ for some Hackenbush position $\hat G$, and so $o^-(G) = o^+(\hat G)$. It suffices to prove that $o^+((*:H)^*)=o^+(H)$ for any Hackenbush position $H$.

We note first that for any Hackenbush position $H$, $o^+(*:H + *) = o^+(H)$. This is because, in the game $*:H+*$, if any player cuts one of the grounded green edges, the other player cuts the other one and wins. Hence the last player able to move in $H$ wins. Thus, if $(*:H)^*=*:H+*$, we are done. Otherwise, $(*:H)^*=*:H$, and so $H$ is also a star-based position. Hence $o^+((*:H)^*)=o^+(H) = \mathcal{N}$. 
\end{proof}



We conclude this section with the question: For which disjunctive sums of star-based Hackenbush positions $G$ do there exist evil twins $G^* \in \{G,G+*\}$ such that $o^+(G) = o^-(G^*)$ and $o^+(G^*)=o^-(G)$?

\section{Future Directions}
We have related the normal and mis\`ere play outcome classes for some star-based Hackenbush positions and completely classified certain classes of positions. In particular, we have shown that if $G$ is a disjunctive sum of Shrubs and Generalized Flowers, then it has an evil twin $G^*$ in the set $\{G, G+*\}$, such that the outcomes of $G$ and $G^*$ are equal after toggling play conventions. Moreover, if $H$ is the Stalks representation of underlying nim game in the stems and Shrubs, $G^*$ is obtained by adding $*$ to $G$ if and only if $H^*$ is obtained by adding $*$ to $H$. From this it follows that a winning strategy under mis\`ere play is to play as in normal play until the only winning move is to a game $G'$ where the underlying nim game in the stems and Shrubs consists only of piles of size $1$, after which a winning strategy can be easily determined by examining the advantage and edges of followers of $G'$, as in Section \ref{Sprigs}. We conclude by listing some possible directions for future investigation.

\begin{enumerate}
\item We have shown that if $G$ is a Shrub, then it is equivalent to a nim-heap, and if $G=*_n:B$ is a Generalized Flower with blossom value $x$, then it can be treated in the Generalized Flower universe as the game $*_n:x$. Can we say more about equivalences of Generalized Flowers? For an integer $n \geq 1$ and real $x$, which Generalized Flowers are equivalent to $*_n:x$ in the dicot universe? In the universe of star-based Hackenbush positions? In the universe of all mis\`ere games?
\item We have shown that, for all disjunctive sums of star-based Hackenbush positions $G$ where the blossom is either all green or all red and blue, there exists an evil twin $G^* \in \{G,G+*\}$ such that $o^+(G) = o^-(G^*)$ and $o^+(G^*)=o^-(G)$. This partially answers a question of McKey et al., which asks, for which Hackenbush positions do we have $o^+(G)=o^-(G+*)$ and $o^+(G+*)=o^-(G)$? We further ask whether there exists such $G^*$ for all disjunctive sums of star-based Hackenbush positions, and other Hackenbush position in general.
\item We have shown that solving Hackenbush Flowers under one play convention is equivalent to solving it under the other play convention. However, Berlekamp's original question remains open: Who wins a game of Hackenbush Flowers under normal play?
\item We have shown that under mis\`ere play all star-based Green Hackenbush positions, in particular Green Hackenbush positions without cycles, are equivalent to nim heaps. What about general Green Hackenbush positions? How would one deal with cycles? Are there any relationships between the mis\`ere play nim-values and outcome classes?
\end{enumerate}

\section{Acknowledgements}

This research was performed at the University of Minnesota Duluth REU run by Joe Gallian. The REU was supported by the National Science Foundation, the Department of Defense (grant number DMS 1062709), the National Security Agency (grant number H98230-11-1-0224), and Princeton University. 

I would like to thank Joe Gallian for supervising this research. I am also grateful to Eric Riedl, Lynnelle Ye, Alison Miller, Adam Hesterberg and the other participants at the 2012 Duluth REU for their insightful comments and support. 

\bibliographystyle{plain}	
\bibliography{HackenbushRefs}		

\end{document}